\begin{document}

\newtheorem{tm}{Theorem}[section]
\newtheorem{pp}[tm]{Proposition}
\newtheorem{lm}[tm]{Lemma}
\newtheorem{df}[tm]{Definition}
\newtheorem{tl}[tm]{Corollary}
\newtheorem{re}[tm]{Remark}
\newtheorem{eap}[tm]{Example}

\newcommand{\pof}{\noindent {\bf Proof} }
\newcommand{\ep}{$\quad \Box$}

\newcommand{\al}{\alpha}
\newcommand{\be}{\beta}
\newcommand{\var}{\varepsilon}
\newcommand{\la}{\lambda}
\newcommand{\de}{\delta}
\newcommand{\str}{\stackrel}

\renewcommand{\proofname}{\bf Proof}

\allowdisplaybreaks

\begin{frontmatter}

\title{Properties of several fuzzy set spaces
\tnoteref{usc}
 }
\tnotetext[usc]{Project supported by
 Natural Science Foundation of Fujian Province of China(No. 2020J01706)}
\author{Huan Huang}
 \ead{hhuangjy@126.com }
\address{Department of Mathematics, Jimei
University, Xiamen 361021, China}

\date{}

\begin{abstract}
This paper discusses the properties the spaces of fuzzy sets in a metric space
equipped with
 the endograph metric and the sendograph metric, respectively.
We fist give some relations among
  the
endograph metric, the sendograph metric and the $\Gamma$-convergence, and then investigate the
level characterizations of the endograph metric and the
 $\Gamma$-convergence.
By using the above results, we give some relations among the endograph metric, the sendograph metric, the supremum metric and the $d_p^*$ metric, $p\geq 1$.
On the basis of the above results,
we
present the characterizations of total boundedness, relative compactness and compactness
in the space
of
compact positive $\al$-cuts fuzzy sets equipped with the endograph metric,
and
in
the space
of
 compact support fuzzy sets equipped with the sendograph metric, respectively.
 Furthermore, we
give
completions of
these metric spaces, respectively.

\end{abstract}

\begin{keyword}
  Endograph metric; Sendograph metric; Hausdorff metric; Total boundedness; Relative compactness; Compactness; Completion
\end{keyword}

\end{frontmatter}

\date{}

\section{Introduction}

A fuzzy set can be identified with its endograph. Also, a fuzzy set can be identified with its sendograph.
So convergence structures
on fuzzy sets can be defined on their endographs or sendographs.
The the endograph metric $H_{\rm end}$ convergence, the sendograh metric $H_{\rm send}$ convergence and the
$\Gamma$-convergence are this kind of convergence structures.
These three convergence structures
are related to each other
\cite{rojas, huang}.

The endograph metric on fuzzy sets
is shown to
has significant advantages
 \cite{kloeden, kupka}.
The sendograph metric has attracted deserving attentions \cite{fan,greco}.
Compactness is one of the central concepts in topology and analysis and useful in applications (see \cite{kelley, wa}).
There is a lot of work devoted to characterizations
of
compactness in various fuzzy set spaces endowed with different topologies \cite{fan,greco,greco3,huang,huang9,roman,trutschnig,wu2}.
 It is natural
to consider what the
 completion of a metric space is. The recent results on completions of fuzzy set spaces including \cite{huang, huang9}.

In \cite{huang},
we presented the relations and level characterizations of the endograph metric $H_{\rm end}$ and the $\Gamma$-convergence.
Based on this, we
have given the characterizations of total boundedness, relative compactness and compactness
of
fuzzy set spaces equipped with the endograph metric $H_{\rm end}$.
We also pointed out the completions of fuzzy set spaces
according to the endograph metric $H_{\rm end}$.

The common fuzzy sets used
in theoretical research and practical applications
are fuzzy sets in a metric space whose
positive cut set are nonempty compact sets.
Common compact fuzzy sets are common fuzzy sets whose support sets are compact.

Throughout this paper, we suppose that $X$ \emph{is a metric space endowed with a metric} $d$.

The symbols $F^1_{USCG}(X)$ and $F^1_{USCB} (X)$ are used to denote
the set of common fuzzy sets in $X$ and
the set of common compact fuzzy sets in $X$, respectively.
We
use $F^1_{USC}(X)$ to denote the set of normal and upper semi-continuous fuzzy sets in $X$.
$F^1_{USCB} (X)$ is a subset of $F^1_{USCG}(X)$.
$F^1_{USCG}(X)$ is a
subset of $F^1_{USC}(X)$.

The results in \cite{huang} are obtained
 on the realm of fuzzy sets in the $m$-dimensional Euclidean space $\mathbb{R}^m$.
$\mathbb{R}^m$ is a special type
of metric space (for simplicity, in this paper, $\mathbb{R}$ is used to denote the $1$-dimensional Euclidean space).
Of course, it is worth to study the
fuzzy sets in a metric space
\cite{jarn, greco, greco3}.
In this paper, the results are obtained on the realm of fuzzy sets in a general metric space $X$.
We mainly discuss
 $H_{\rm end}$ metirc and $H_{\rm send}$ metric on $F^1_{USC}(X)$
including
the relations among $H_{\rm end}$ metric, $H_{\rm send}$ metric and other convergence structures,
and properties of $H_{\rm end}$ metric and $H_{\rm send}$ metric.

The first part of this paper is the relations among the $H_{\rm end}$ metric, the $H_{\rm send}$ metric and the
$\Gamma$-convergence, the level characterizations of the endograph metric $H_{\rm end}$ and the $\Gamma$-convergence, and
the relations among the supremum metric $d_\infty$, the $H_{\rm end}$ metric, the $H_{\rm send}$ metric and
the $d_p^*$ metric.
The $d_p^*$ metric is an expansion of the $L_p$-type $d_p$ distance on $F^1_{USC}(X)$.
The conclusions on the relations among
the $H_{\rm end}$ metric, the $H_{\rm send}$ metric and
the $d_p^*$ metric are verified using the above results including the level characterization of the endograph metric $H_{\rm end}$.

To aid discussion, we introduce the sets $P^1_{USC}(X)$ and $P^1_{USCB}(X)$. $P^1_{USCB}(X)$ is a subset of $P^1_{USC}(X)$.
The
$F^1_{USC}(X)$ and $F^1_{USCB}(X)$ can be viewed as the subsets of $P^1_{USC}(X)$ and $P^1_{USCB}(X)$, respectively.

We
define the
$H_{\rm send}$ distance and the $H_{\rm end}$ distance on $P^1_{USC}(X)$,
and give the relations among
the $H_{\rm send}$ distance, the $H_{\rm end}$ distance and the Kuratowski convergence on $P^1_{USC}(X)$.
Then, as corollaries, we obtain the
 relations among
the $H_{\rm send}$ metric, the $H_{\rm end}$ metric and the $\Gamma$-convergence on $F^1_{USC}(X)$.

We discuss the level characterizations
of the $\Gamma$-convergence and the
endograph metric $H_{\rm end}$ on
fuzzy sets in $F^1_{USC}(X)$.
It is shown that
under some conditions,
the $\Gamma$-convergence of fuzzy sets can be decomposed to
the Kuratowski convergence of certain $\al$-cuts,
and
the $H_{\rm end}$ metric convergence of fuzzy sets
can be decomposed to
the Hausdorff metric convergence of certain $\al$-cuts.

The understanding of the relation among the $H_{\rm end}$ metric, the $H_{\rm send}$ metric and the $\Gamma$-convergence
 is beneficial for the understanding of themselves.
The
 level characterizations
 help to study these three convergence structures on fuzzy sets by using
the
properties of the corresponding $\al$-cuts.

A
 $H_{\rm send}$ metric convergent sequence
is
a
 $H_{\rm end}$ metric convergent sequence.
A
 $H_{\rm end}$ metric convergent sequence is also a $\Gamma$-convergent sequence.
So the
knowledge of the $\Gamma$-convergent sequences can help us to analyse the properties of the $H_{\rm end}$ convergent sequences and the $H_{\rm send}$ convergent sequences.
For this reason,
we
 give the level characterizations of a $\Gamma$-convergent sequence in this paper.

Based on the results in the first part, we give the other results of this paper.
 The second part of this paper
is the characterizations of total boundedness, relative compactness and compactness
in $(F^1_{USCG} (X), H_{\rm end})$ and $(F^1_{USCB} (X), H_{\rm end})$, respectively.
Here we mention
that
the characterization of
relatively compactness
in
$(F^1_{USCB} (X), H_{\rm send})$ has already been given by Greco \cite{greco}.

The total boundedness is the key property of compactness in metric space.
We show that a set $U$ in $(F^1_{USCG} (X), H_{\rm end})$ is totally bounded (respectively, relatively compact)
if and only if
for each $\al\in (0,1]$,
the union of all the $\al$-cuts of $u\in U$ is totally bounded (respectively, relatively compact) in $(X,d)$.
We also show that
 a set $U$ in $(F^1_{USCB} (X), H_{\rm send})$ is totally bounded
if and only if
the union of all the $0$-cuts of $u\in U$ is totally bounded in $(X, d)$.

It is shown that
for a set $U$ in $(F^1_{USCG} (X), H_{\rm end})$ or $(F^1_{USCB} (X), H_{\rm send})$,
the total boundedness, relative compactness and compactness
of $U$
are closely related to
 the total boundedness, relative compactness and compactness    of the union of all the $\al$-cuts of $u\in U$ in $(X,d)$, respectively.

We point out that some part of the proof of the characterizations in this paper is similar to the corresponding part in \cite{huang}.
But in general, since a set in $X$ need not has the properties of the set in $\mathbb{R}^m$,
 the proof of the conclusions in this paper requires deep understandings of the problem.

The third part is the completions of several common fuzzy set spaces under the $H_{\rm end}$ metric and the $H_{\rm send}$ metric, respectively.

Let
 $\widetilde{X}$ denote the completion of $X$.
We show that the
space $(P^1_{USCB} (\widetilde{X}), H_{\rm send})$ is a completion of the fuzzy set space $(F^1_{USCB} (X), H_{\rm send})$.
Then
we show that $(F^1_{USCG} (\widetilde{X}), H_{\rm end})$
is a completion of $(F^1_{USCB} (X), H_{\rm end})$.
So, of course,
 $(F^1_{USCG} (\widetilde{X}), H_{\rm end})$ is also a completion of $(F^1_{USCG} (X), H_{\rm end})$.

These conclusions indicate that in the case of the $H_{\rm end}$ metric, a completion of
 the space of common compact fuzzy set in $X$ is the space of common fuzzy set in $\widetilde{X}$;
in the case of the $H_{\rm send}$ metric, a completion of
 the space of common compact fuzzy set in $X$
is a metric space in which the space of common compact fuzzy set in $\widetilde{X}$
can be isometrically embedded,
and
each element of which
is a nonempty compact set in $\widetilde{X}\times[0,1]$.

The conclusions for the completions of the spaces of fuzzy set in $X$
given in this paper apply to not only the cases that $X$ is a complete metric space
but also the cases that $X$ is an incomplete metric space.

The remainder of this paper is organized as follows.
In Section \ref{bas}, we recall and give some basic notions and fundamental results related to fuzzy sets
and
convergence structures on them.
 In Section \ref{fsm}, we discuss
the properties and relations of $H_{\rm send}$, $H_{\rm end}$ and Kuratowski convergence on
 $P^1_{USC} (X)$.
Based on this, we give some
 relations among
$H_{\rm end}$, $H_{\rm send}$
and $\Gamma$-convergence on $F^1_{USC} (X)$.
In Sections \ref{lcg} and \ref{lce}, we investigate the level characterizations
of
the $\Gamma$-convergence and the $H_{\rm end}$ convergence, respectively.
By using the above results, Section \ref{rem} discusses some relations
among the $d_\infty$ metric, the $d_p^*$ metric, the $H_{\rm end}$ metric
and the $H_{\rm send}$ metric.
In Section \ref{cmfuzzy},
on the basis of the conclusions in previous sections, we give characterizations of total boundedness, relative compactness and compactness
in $(F^1_{USCG} (X), H_{\rm end})$ and $(F^1_{USCB} (X), H_{\rm send})$, respectively.
In
Section 8, we give
completions of
$(F^1_{USCG} (X), H_{\rm end})$ and $(F^1_{USCB} (X), H_{\rm send})$, respectively.
At last, we draw the conclusions in Section 9.

\section{Fuzzy sets and convergence structures on them} \label{bas}

In this section, we recall and give some basic notions and fundamental results related to fuzzy sets
and
convergence structures on them.
Readers
can refer to \cite{wu, du,kle} for related contents.

A fuzzy set $u$ in $X$ can be seen as a function $u:X \to [0,1]$.
A
subset $S$ of $X$ can be seen as a fuzzy set in $X$. If there is no confusion,
 the fuzzy set corresponding to $S$ is often denoted by $\chi_{S}$; that is,
\[ \chi_{S} (x) = \left\{
                    \begin{array}{ll}
                      1, & x\in S, \\
                      0, & x\in X \setminus S.
                    \end{array}
                  \right.
\]
For simplicity,
for
$x\in X$, we will use $\widehat{x}$ to denote the fuzzy set  $\chi_{\{x\}}$ in $X$.
In this paper, if we want to emphasize a specific metric space $X$, we will write the fuzzy set corresponding to $S$ in $X$ as
$S_{F(X)}$, and the fuzzy set corresponding to $\{x\}$ in $X$ as $\widehat{x}_{F(X)}$.

The symbol $F(X)$ is used
to
denote the set of
all fuzzy sets in $X$.
For
$u\in F(X)$ and $\al\in [0,1]$, let $\{u>\al \} $ denote the set $\{x\in X: u(x)>\al \}$, and let $[u]_{\al}$ denote the \emph{$\al$-cut} of
$u$, i.e.
\[
[u]_{\al}=\begin{cases}
\{x\in X : u(x)\geq \al \}, & \ \al\in(0,1],
\\
{\rm supp}\, u=\overline{    \{ u > 0 \}    }, & \ \al=0,
\end{cases}
\]
where $\overline{S}$
denotes
the topological closure of $S$ in $(X,d)$.

For
$u\in F(X)$,
define
\begin{gather*}
{\rm end}\, u:= \{ (x, t)\in  X \times [0,1]: u(x) \geq t\},
\\
{\rm send}\, u:= \{ (x, t)\in  X \times [0,1]: u(x) \geq t\} \cap  ([u]_0 \times [0,1]).
\end{gather*}
 $
{\rm end}\, u$ and ${\rm send}\, u$
 are called the endograph and the sendograph of $u$, respectively.

The symbol $K(X)$ and
 $C(X)$ are used
to
 denote the set of all nonempty compact subsets of $X$ and the set of all nonempty closed subsets of $X$, respectively.

Let
$F^1_{USC}(X)$
denote
the set of all normal and upper semi-continuous fuzzy sets $u:X \to [0,1]$,
i.e.,
$$F^1_{USC}(X) :=\{ u\in F(X) : [u]_\al \in  C(X)  \  \mbox{for all} \   \al \in [0,1]   \}.  $$

We introduce some subclasses of $F^1_{USC}(X)$, which will be discussed in this paper.
Define
\begin{gather*}
F^1_{USCB}(X):=\{ u\in  F^1_{USC}(X): [u]_0 \in K(X) \},
\\
F^1_{USCG}(X):=\{ u\in  F^1_{USC}(X): [u]_\al \in K(X) \ \mbox{for all} \   \al\in (0,1] \}.
 \end{gather*}
Clearly,
 $$F^1_{USCB}(X) \subseteq  F^1_{USCG}(X)   \subseteq  F^1_{USC}(X).$$

The set of (compact) fuzzy numbers
are
denoted by
$E^m$. It
is
defined as
$$
E^m:=
\{ u\in F^1_{USCB}(\mathbb{R}^m):  [u]_\al  \mbox{ is a convex subset of } \mathbb{R}^m \mbox{ for } \al\in [0,1] \}.
$$
Fuzzy numbers have attracted much attention from theoretical research and practical applications
\cite{da, du, wu, gong, wang2, grzegorzewski2}.

Let
$(X,d)$ be a metric space.
 We
use $\bm{H}$ to denote the \emph{\textbf{Hausdorff distance}}
on
 $C(X)$ induced by $d$, i.e.,
$$
\bm{H(U,V)}  =   \max\{H^{*}(U,V),\ H^{*}(V,U)\}
$$
for arbitrary $U,V\in C(X)$,
where
  $$
H^{*}(U,V)=\sup\limits_{u\in U}\,d\, (u,V) =\sup\limits_{u\in U}\inf\limits_{v\in
V}d\, (u,v).
$$

The metric $\overline{d}$ on $X \times [0,1]$ is defined
as
$$  \overline{d } ((x,\al), (y, \beta)) = d(x,y) + |\al-\beta| .$$
If there is no confusion, we also use $H$ to denote the Hausdorff distance on $C(X\times [0,1])$ induced by $\overline{d}$.

\begin{re}
{\rm

$\rho$ is said to be a \emph{metric} on $Y$ if $\rho$ is a function from $Y\times Y$ into $\mathbb{R}$
satisfying
positivity, symmetry and triangle inequality. At this time, $(Y, \rho)$ is said to be a metric space.

  $\rho$ is said to be an \emph{extended metric} on $Y$ if $\rho$ is a function from $Y\times Y$ into $\mathbb{R} \cup \{+\infty\} $
satisfying
positivity, symmetry and triangle inequality. At this time, $(Y, \rho)$ is said to be an extended metric space.

We can see that for arbitrary metric space $(X,d)$, the Hausdorff distance $H$ on $K(X)$ induced by $d$ is a metric.
So
the Hausdorff distance $H$ on $K(X\times [0,1])$ induced by $\overline{d}$ on $X\times [0,1]$
is a metric. In these cases, we call the Hausdorff distance the Hausdorff metric.

The Hausdorff distance $H$ on $C(X)$ induced by $d$ on $X$
is an extended metric, but probably not a metric,
because
$H(A,B)$ could be equal to $+\infty$ for certain metric space $X$ and $A, B \in C(X)$.
Clearly, if $H$ on $C(X)$ induced by $d$
is not a metric, then $H$ on $C(X\times [0,1])$ induced by $\overline{d}$
is also not a metric.
So
the Hausdorff distance $H$ on $C(X\times [0,1])$ induced by $\overline{d}$ on $X\times [0,1]$
 is an extended metric but probably not a metric.
In the cases that the Hausdorff distance $H$ is an extended metric, we call the Hausdorff distance the Hausdorff extended metric.

We can see that $H$ on $C(\mathbb{R}^m)$ is an extended metric but not a metric,
and then the same is $H$ on $C(\mathbb{R}^m\times [0,1])$.

In this paper, for simplicity,
 we refer to both the Hausdorff extended metric and the Hausdorff metric as the Hausdorff metric.

}
\end{re}

The Hausdorff metric has the following important properties.

\begin{tm} \cite{roman,kle}\label{bfc} Let $(X,d)$ be a metric space and let $H$ be the Hausdorff metric induced by $d$.
Then
\\
 ({\romannumeral1})\ $ (X,d)$ is complete $\Longleftrightarrow$  $(K(X), H)$ is complete;
 \\
  ({\romannumeral2})\ $ (X,d)$ is separable $\Longleftrightarrow$  $(K(X), H)$ is separable;
  \\
  ({\romannumeral3})\ $ (X,d)$ is compact $\Longleftrightarrow$  $(K(X), H)$ is compact.
\end{tm}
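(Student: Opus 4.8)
The plan is to treat all three \emph{backward} implications (a property of $(K(X),H)$ implying the same property of $(X,d)$) by a single uniform device, and then attack the three \emph{forward} implications separately, reusing the completeness statement as a tool when I reach compactness. The unifying device is the map $\iota:X\to K(X)$, $\iota(x)=\{x\}$. Since $H(\{x\},\{y\})=\max\{d(x,y),d(x,y)\}=d(x,y)$, this is an isometric embedding, so it suffices to show that its image $\iota(X)$ (the family of singletons) is \emph{closed} in $(K(X),H)$. For that I would use that diameter is $1$-Lipschitz for the Hausdorff metric, $|\mathrm{diam}(U)-\mathrm{diam}(V)|\le 2H(U,V)$: if singletons $\{x_n\}$ converge in $H$ to a compact set $A$, then $\mathrm{diam}(A)=\lim_n\mathrm{diam}(\{x_n\})=0$, forcing $A$ to be a singleton. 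Because completeness, separability, and compactness are each inherited by closed subspaces of a metric space, and $(X,d)$ is isometric to the closed subspace $\iota(X)$, every backward implication follows at once.

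For the forward directions the heart of the matter is completeness. Given a Cauchy sequence $(A_n)$ in $(K(X),H)$, I would form the candidate limit
$$A=\{x\in X:\ \text{there exist } x_n\in A_n \text{ with } x_n\to x\},$$
the set of limits of convergent selections from the $A_n$. Using the Cauchy estimates one passes to a subsequence with $H(A_{n_k},A_{n_{k+1}})<2^{-k}$, chains together nearby selected points into a Cauchy sequence in $X$, and invokes completeness of $(X,d)$ to see $A$ is nonempty; a similar estimate shows $A$ is closed and totally bounded, hence compact. The remaining work is to verify $H(A_n,A)\to0$, which means controlling both $H^{*}(A_n,A)$ and $H^{*}(A,A_n)$ by carefully distributing the Cauchy tail bounds.

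This construction-and-verification step is the one I expect to be the main obstacle: extracting convergent selections from the compact sets $A_n$ and simultaneously estimating the two one-sided Hausdorff distances is delicate, and it is precisely where both compactness of the $A_n$ and completeness of $X$ are indispensable.

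Once completeness is secured, separability and compactness follow by density/total-boundedness arguments. For separability, given a countable dense $D\subseteq X$, I would show the countable family of nonempty finite subsets of $D$ is dense in $(K(X),H)$: cover a compact $A$ by finitely many $\varepsilon$-balls, replace each center by a nearby point of $D$, and check the resulting finite set lies within $\varepsilon$ of $A$ in $H$. For compactness, I would use compact $\Longleftrightarrow$ complete and totally bounded: completeness comes from part~({\romannumeral1}), while total boundedness follows because a finite $\varepsilon$-net $\{x_1,\dots,x_k\}$ for $X$ produces a finite $\varepsilon$-net for $K(X)$, namely the nonempty subsets of $\{x_1,\dots,x_k\}$, each compact $A$ being approximated by the subset of those net points that lie within $\varepsilon$ of $A$.
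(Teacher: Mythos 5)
The paper does not actually prove this theorem: it is quoted from Rom\'{a}n-Flores \cite{roman} as a known result, so there is no in-paper argument to compare against. Your proposal is the classical proof and I find it essentially correct. The treatment of all three backward implications via the isometric embedding $x\mapsto\{x\}$, together with the observation that its image is closed in $(K(X),H)$ because the diameter is $2$-Lipschitz with respect to $H$, is complete as written (for separability you do not even need closedness, since separability of metric spaces is hereditary). The forward separability argument (finite subsets of a countable dense set) and the forward compactness argument (completeness from part (i) plus total boundedness, approximating a compact $A$ by the set of net points within $\varepsilon$ of $A$, which is nonempty precisely because the net is an $\varepsilon$-net) are both correct. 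The only portion left at the level of an outline is the forward completeness claim, i.e.\ that the candidate limit $A=\liminf_{n}A_n$ of a Cauchy sequence in $(K(X),H)$ is nonempty and compact and satisfies $H(A_n,A)\to 0$; this is the standard completeness theorem for the Hausdorff metric, and your sketch (rapidly Cauchy subsequence, chained selections converging in the complete space $X$, closedness and total boundedness of $A$, and the two one-sided estimates $H^{*}(A,A_n)\le\sup_{m\ge n}H(A_m,A_n)$ and $H^{*}(A_n,A)\le 2\varepsilon$ obtained by starting a chain at an arbitrary point of $A_n$) is exactly how the details are filled in. No step of your plan would fail; it simply needs the $\varepsilon$-bookkeeping written out.
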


Rojas-Medar and Rom\'{a}n-Flores \cite{rojas} introduced the Kuratowski convergence
of a sequence of sets in a metric space.

Let $(X,d)$ be a metric space.
Let $C$ be a set in $X$ and
$\{C_n\}$ a sequence of sets in $X$.
 $\{C_n\}$ is said to \emph{\textbf{Kuratowski converge}} to
$C$ according to $(X,d)$, if
$$
C
=
\liminf_{n\rightarrow \infty} C_{n}
=
\limsup_{n\rightarrow \infty} C_{n},
$$
where
\begin{gather*}
\liminf_{n\rightarrow \infty} C_{n}
 =
 \{x\in X: \  x=\lim\limits_{n\rightarrow \infty}x_{n},    x_{n}\in C_{n}\},
\\
\limsup_{n\rightarrow \infty} C_{n}
=
\{
 x\in X : \
 x=\lim\limits_{j\rightarrow \infty}x_{n_{j}},x_{n_{j}}\in C_{n_j}
\}
 =
 \bigcap\limits_{n=1}^{\infty}   \overline{   \bigcup\limits_{m\geq n}C_{m}    }.
\end{gather*}
In this case, we'll write
\bm{  $C=\lim^{(K)}_{n\to\infty}C_n    $ } according to $(X,d)$.
If
there is no confusion, we will not emphasize
the metric space
$(X,d)$ and write  $\{C_n\}$ \emph{\textbf{Kuratowski converges}} to
$C$ or \bm{  $C=\lim^{(K)}_{n\to\infty}C_n    $ } for simplicity.

\begin{re} \label{ksc}
{\rm
Definition 3.1.4 in \cite{kle} gives the definitions of
$\liminf C_{n}$, $\limsup C_{n}$
 and
$\lim C_{n}$
for
 a net of subsets $\{C_n, n\in D\}$ in a topological space.
When $\{C_n, n=1,2,\ldots\}$ is
 a sequence of subsets of a metric space,
$\liminf C_{n}$, $\limsup C_{n}$
 and
$\lim C_{n}$
according to Definition 3.1.4 in \cite{kle}
are
$\liminf_{n\rightarrow \infty} C_{n}$, $\limsup_{n\rightarrow \infty} C_{n}$
and $\lim^{(K)}_{n\to\infty}C_n  $
according to
 the above definitions, respectively.

  }
\end{re}

Rojas-Medar and Rom\'{a}n-Flores \cite{rojas} have introduced
the
$\Gamma$-convergence
on
$F^1_{USCB} (\mathbb{R}^m)$ by using the Kuratowski convergence. Similarly, we can define
 the $\Gamma$-convergence of a sequence
of fuzzy sets
on $F^1_{USC} (X)$.

  Let $u$, $u_n$, $n=1,2,\ldots$, be fuzzy sets in $F^1_{USC} (X)$.
   $\{u_n\}$ is said to \bm{$\Gamma$}-\emph{\textbf{converge}}
  to
  $u$, denoted by \bm{$u = \lim_{n\to \infty}^{(\Gamma)}  u_n$},
  if
  ${\rm end}\, u= \lim_{n\to \infty}^{(K)}  {\rm end}\, u_n$
according to
$(X \times [0,1], \overline{d})$.

Let $(X,d)$ be a metric space and let $u \in  F_(X)$. Then from basic analysis, the following three properties
are
equivalent:
\begin{description}
  \item[(\romannumeral1)]  $u$ is upper semi-continuous;
  \item[(\romannumeral2)] ${\rm end}\, u$ is closed in $(X\times [0,1],   \overline{d})$;
  \item[(\romannumeral3)] ${\rm send}\, u$ is closed in $(X\times [0,1],  \overline{d})$.
\end{description}

The endograph metric $H_{\rm end} $ and the sendograph metric $H_{\rm send} $
can be defined on $F^1_{USC}(X)$ as usual.
For $u,v \in F^1_{USC}(X)$,
\begin{gather*}
\bm{  H_{\rm end}(u,v)    }: =  H({\rm end}\, u,  {\rm end}\, v ),
\\
\bm{  H_{\rm send}(u,v)    }: =  H({\rm send}\, u,  {\rm send}\, v ).
  \end{gather*}

The endograph metric $H_{\rm end}$ and the sendograph metric $H_{\rm send}$ are defined by using
the Hausdorff
metric on $C(X \times [0,1])$ induced by $\overline{d}$ on $X \times [0,1]$.

The $d_\infty$ metric
  on $ F^1_{USC} (X)$
is defined as
$$d_\infty (u,v) :=   \sup\{ H([u]_\al, [v]_\al) :\al\in [0,1]  \}.$$

\begin{pp}
Let $u,v \in F^1_{USC} (X)$. Then
\begin{equation}\label{smr}
  d_\infty(u,v) \geq H_{\rm send}(u,v) \geq H_{\rm end}(u,v).
\end{equation}
\end{pp}

\begin{proof} (The proof is routine.)
  Note that
for each $(x,\al) \in {\rm send}\, u$,
\begin{align*}
  &  \overline{d}((x,\al),  {\rm send}\, v)
= \inf_{(y, \beta) \in {\rm send}\, v} \overline{d}((x,\al), (y, \beta))  \\
&\leq \inf_{(y, \alpha) \in {\rm send}\, v} \overline{d}((x,\al), (y, \alpha))  \\
&= \inf_{(y, \alpha) \in {\rm send}\, v} d(x, y)  \\
   & =   \inf_{y\in [v]_\alpha}   d(x, y)= d(x, [v]_\al).
 \end{align*}
Thus
\begin{align} \label{sger}
& H^*({\rm send}\, u,    {\rm send}\, v) =  \sup_{(x,\al)\in {\rm send}\, u } \overline{d}((x,\al),  {\rm send}\, v)  =  \sup_{\al\in [0,1]}\sup_{(x,\al)\in {\rm send}\, u } \overline{d}((x,\al),  {\rm send}\, v)  \nonumber \\
& \leq  \sup_{\al\in [0,1]} \sup_{(x,\al)\in {\rm send}\, u }  d(x, [v]_\al)
 = \sup_{\al\in [0,1]}\sup_{x \in [u]_\al }  d(x, [v]_\al)  \nonumber \\
&\leq  \sup_{\al\in [0,1]} H^*([u]_\al, [v]_\al) \leq   d_\infty (u,v).
\end{align}
Similarly $ H^*({\rm send}\, v,    {\rm send}\, u) \leq   d_\infty (v,u) = d_\infty (u,v)$ (Clearly
$ H^*({\rm send}\, v,    {\rm send}\, u) \leq   d_\infty (v,u) $ can also be obtained by interchanging $u$ with $v$ in $H^*({\rm send}\, u,    {\rm send}\, v) \leq   d_\infty (u,v)$ in \eqref{sger}).
So we have
$$ H_{\rm send}(u,v) =  \max\{H^*({\rm send}\, u,    {\rm send}\, v), H^*({\rm send}\, v,    {\rm send}\, u)\} \leq   d_\infty (u,v).$$

The conclusion
$H_{\rm end} (u,v) \leq H_{\rm send} (u,v)$
follows from
Theorem \ref{pseu} (\romannumeral1).
\end{proof}

\begin{re}
  {\rm

We can see that $H_{\rm end}$ is a metric on $F^1_{USC}(X)$ with $H_{\rm end}(u,v) \leq 1$ for all $u,v \in F^1_{USC}(X)$.
 Both $d_\infty$
and
$H_{\rm send}$ are metrics on $F^1_{USCB}(X)$.
However,
each one of
$d_\infty$ and $H_{\rm send}$ on $F^1_{USC}(X)$ is an extended metric but probably not a metric. See also
Remark 3.3 in \cite{huang17}.

We can see that both $d_\infty$ and $H_{\rm send}$ on $F^1_{USCG}(\mathbb{R}^m)$ are not metrics, they are extended metrics.

For simplicity,
in this paper, we call
$H_{\rm send}$ on $F^1_{USC}(X)$ the $H_{\rm send}$ metric or the sendograph metric $H_{\rm send}$.
We call
$d_\infty$ on $F^1_{USC}(X)$ the $d_\infty$ metric or the supremum metric $d_\infty$.

}
\end{re}

\section{$H_{\rm end}$, $H_{\rm send}$ and Kuratowski convergence on $P^1_{USC} (X)$ } \label{fsm}

In this section, we introduce $P^1_{USC} (X)$ and its subset $P^1_{USCB} (X)$,
and
 define the $H_{\rm send}$ distance and the $H_{\rm end}$ distance on $P^1_{USC} (X)$.
We discuss
the properties and relations of $H_{\rm send}$, $H_{\rm end}$ and Kuratowski convergence on
 $P^1_{USC} (X)$.
Based on this, we give some
 relations among
$H_{\rm end}$, $H_{\rm send}$
and $\Gamma$-convergence on $F^1_{USC} (X)$.

Since $F^1_{USC} (X)$ and $F^1_{USCB} (X)$ can be seen as subsets of $P^1_{USC} (X)$ and $P^1_{USCB} (X)$, respectively,
the conclusions on $P^1_{USC} (X)$ and $P^1_{USCB} (X)$
are useful
for the discussions of fuzzy sets in this paper.

For $u \subseteq  X \times [0,1]$ and $\al\in [0,1]$,
let
 $\langle u \rangle_\al := \{ x:   (x,\al) \in u\}$.
 $P^1_{USC} (X)$ and $P^1_{USCB} (X)$
are
subsets of the power set of $X\times [0,1]$ defined by
\begin{gather*}
\begin{split}
P^1_{USC} (X): = \{u  \subseteq   X\times [0,1]:    \langle u \rangle_\al = &  \bigcap_{\beta <\al} \langle u \rangle_\beta    \  \mbox{for all} \  \al\in (0,1];
\\
&   \langle u \rangle_\al \in C(X) \ \mbox{for all} \  \al\in [0,1] \},
\end{split}
\\
\begin{split}
P^1_{USCB} (X): = \{u  \in  P^1_{USC} (X): \langle u\rangle_\al \in K(X) \ \mbox{for all} \  \al\in [0,1] \}.
\end{split}
\end{gather*}
Clearly $P^1_{USCB} (X) \subseteq P^1_{USC} (X)$.

From the basic analysis, we can obtain that
\\
(\romannumeral1) if $u\in  P^1_{USC} (X)$,
then $u\in C(X\times [0,1])$;
\\
(\romannumeral2)
if $u\in C(X\times [0,1])$, then $\langle u \rangle_\al \in C(X) \cup \{\emptyset\}$ for all $\al\in [0,1]$;
\\
(\romannumeral3)
if $u\in  P^1_{USCB} (X)$, then $u\in K(X\times [0,1])$;
\\
(\romannumeral4)
if $u\in K(X\times [0,1])$, then $\langle u \rangle_\al \in K(X) \cup \{\emptyset\}$ for all $\al\in [0,1]$.

So we have
\begin{gather*}
\begin{split}
P^1_{USC} (X) = \{u  \in  C(X\times [0,1]):   \langle u \rangle_1 \not=\emptyset,  \langle u \rangle_\al =   \bigcap_{\beta <\al} \langle u \rangle_\beta    \  \mbox{for all} \  \al\in (0,1] \},
\end{split}
\\
\begin{split}
P^1_{USCB} (X) = \{u  \in  P^1_{USC} (X):  u  \in  K(X\times [0,1])\}.
\end{split}
\end{gather*}

 We define the $H_{\rm send}$ distance and the $H_{\rm end}$ distance on $P^1_{USC} (X)$:
\begin{gather*}
 H_{\rm send} (u,v)  :=  H(u,v),
  \\
  H_{\rm end} (u,v)  :=  H(\underline{u}, \underline{v}),
\end{gather*}
where $H$ is the Hausdorff
metric on $C(X \times [0,1])$ induced by $\overline{d}$ on $X \times [0,1]$,
and
$\underline{u}  :=   u \cup  (  X \times \{0\}   )$.
Clearly,
$H_{\rm send}$ is an extended metric on $P^1_{USC} (X)$
and
$H_{\rm end} \leq 1$ on $P^1_{USC} (X)$.

If $X$ contains more than one point, then $H_{\rm end}$ is a pseudometric on $P^1_{USC} (X)$; that is,
$H_{\rm end}$
satisfies symmetry and triangle inequality,
 and
for all $u,v\in P^1_{USC}(X)$,
$0 \leq H_{\rm end}(u,v) < +\infty$ and $H_{\rm end}(u,u)=0$.
However, at this time,
$H_{\rm end}$ is a not a metric on $P^1_{USC} (X)$
because $H_{\rm end}$ do not satisfy
the
positivity. An example is given as follows.
Let $x,y\in X$ with $x\not=y$.
Define $u\in P^1_{USCB} (X)$ given by $\langle u\rangle_\al = \{x\}$ for all $\al\in [0,1]$,
and
$v\in P^1_{USCB} (X)$
 given by
$\langle v\rangle_\al = \{x\}$ for $\al\in (0,1]$ and $\langle v\rangle_0 = \{x,y\}$.
Then $u \not= v$
and
$H_{\rm end}(u,v) = 0$.

Consider
the function
$f: F^1_{USC} (X) \to P^1_{USC} (X)$
given by
$f(u) = {\rm send}\, u$.
Then
\begin{itemize}
  \item
$f$ is an isometric embedding
of
$(F^1_{USC} (X), H_{\rm send})$ in $(P^1_{USC} (X), H_{\rm send})$.

  \item    $f|_{F^1_{USCB}(X) }$ is an isometric embedding
of
$(F^1_{USCB} (X), H_{\rm send})$ in $(P^1_{USCB} (X), H_{\rm send})$.
\end{itemize}
Hence $(F^1_{USC} (X), H_{\rm send})$
can be embedded isometrically in $(P^1_{USC} (X), H_{\rm send})$,
and
$(F^1_{USCB} (X), H_{\rm send})$ can be embedded isometrically in $(P^1_{USCB} (X), H_{\rm send})$.

For $u\in  F^1_{USC} (X) $,
we define
$\overrightarrow{u}:= f(u)$. Then $\overrightarrow{u} \in P^1_{USC} (X) $.

For
$v \in P^1_{USC} (X)$, we
define
$v' \in f(F^1_{USC} (X)) \subseteq P^1_{USC} (X)$ given by
\[
\langle v' \rangle_\al =
\left\{
  \begin{array}{ll}
\langle v \rangle_\al, & \al\in (0,1],
\\
\overline{\cup_{\al>0} \langle v \rangle_\al}, & \al=0.
  \end{array}
\right.
\]
Define $\overleftarrow{v} := f^{-1} (v')$.
Then
$\overleftarrow{v}\in F^1_{USC} (X)$.

For $u,v\in F^1_{USC} (X)$, $H_{\rm send}(u,v) = H_{\rm send}(\overrightarrow{u}, \overrightarrow{v})$
and
$H_{\rm end}(u,v) = H_{\rm end}(\overrightarrow{u}, \overrightarrow{v})$.
For $u,v\in P^1_{USC} (X)$,
$H_{\rm end}(u,v) = H_{\rm end}(\overleftarrow{u}, \overleftarrow{v})$.

For a subset $U$ of $F^1_{USC}(X)$, we use
$\overrightarrow{U}$ to denote
the set
$\{\overrightarrow{u}: u\in U\}$.

For $w \in P^1_{USC} (X)$, the following are equivalent:
(\romannumeral1)
$w\in \overrightarrow{F^1_{USC}(X)}$;
(\romannumeral2)
$\langle w \rangle_0 = \overline{\bigcup_{\delta>0} \langle w\rangle_\delta}$;
(\romannumeral3)
$w=w'=\overrightarrow{\overleftarrow{w}}$;
(\romannumeral4)
$\langle w \rangle_0 = [\overleftarrow{w}]_0 $.

\begin{tm} \label{pseu}
Let $(X,d)$ be a metric space.
For $u, v \in P^1_{USC} (X)$:
\\
 (\romannumeral1) \ $H_{\rm end}(u, v) \leq H_{\rm send}(u, v)$;
\\
(\romannumeral2) \ $H(\langle u \rangle_0, \langle v\rangle_0) \leq H_{\rm send}(u, v)$;
\\
(\romannumeral3) \ If $H_{\rm end} (u,v) < 1$, then
$
  H_{\rm send}(u,v) \leq H_{\rm end}(u,v) + H(\langle u\rangle_0, \langle v\rangle_0)
$.
\\
For a sequence $\{u_n\}$ in $P^1_{USC} (X)$ and $u$ in $P^1_{USC} (X)$:
\\
(\romannumeral4) \
  $H_{\rm send}(u_n, u) \to 0$ if and only if $H_{\rm end}(u_n, u) \to 0$ and $H(\langle u_n\rangle_0, \langle u \rangle_0) \to 0$;
\\
(\romannumeral5) \ $\lim_{n\to \infty}^{(K)} u_n = u $ if and only if
$\lim_{n\to \infty}^{(K)}  \underline{u_n } = \underline{u} $
and
$ \lim_{n\to \infty}^{(K)} \langle u_n \rangle_0 = \langle u \rangle_0$.
\end{tm}

\begin{proof}
Clearly (\romannumeral1) and (\romannumeral2) are true.
The proof of (\romannumeral1) and (\romannumeral2) are routine.

To show (\romannumeral1),
let $(x,\al) \in  \underline{u}$. If $\al=0$, then $\overline{d} ((x,\al),  \underline{v}) =0 \leq H^*( u, v) $.
If $\al> 0$, then $(x,\al) \in  u$ and
$\overline{d} ((x,\al), \underline{v}) \leq H^*( u, v)  $.
From the arbitrariness
of $(x,\al)$ in $ \underline{u}$, we have
$H^*( \underline{u}, \underline{ v})  = \sup_{(x,\al)\in  \underline{u}}  \overline{d} ((x,\al),  \underline{v})  \leq H^*( u,    v) .$
Similarly
$H^*( \underline{v},     \underline{u})  \leq H^*(v,     u) .$
So
$H_{\rm end} (u,v) = H(\underline{u}, \underline{v}) \leq  H(u,v) =H_{\rm send} (u,v),$
i.e. (\romannumeral1) is true.

To show (\romannumeral2),
let $(x,0) \in u$.
If $(y,\al) \in v$, then $(y,0) \in v$ and
$\overline{d}((x,0), (y,\al) ) \geq  \overline{d}((x,0), (y,0) )$.
Hence
$\overline{d}((x,0), v) = \inf_{(y,\al)\in v} \overline{d}((x,0), (y,\alpha) )= \inf_{(y,0)\in v} \overline{d}((x,0), (y,0) ) = \inf_{y\in \langle v \rangle_0} d(x, y) = d(x, \langle v \rangle_0)$.
Since $(x,0) \in u$ if and only if $x\in \langle u \rangle_0$,
thus
$ H^*( \langle u \rangle_0,   \langle v \rangle_0) = \sup_{x\in \langle u \rangle_0}
d(x, \langle v \rangle_0) = \sup_{(x,0) \in  u} \overline{d}((x,0), v) \leq H^*(u,v) $.
Similarly $ H^*( \langle v \rangle_0,   \langle u \rangle_0) \leq  H^*(v,u) $.
So
$ H( \langle u \rangle_0,   \langle v \rangle_0) \leq  H(u,v) = H_{\rm send} (u,v)$, i.e. (\romannumeral2) is true.

To show (\romannumeral3),
let $(x,\alpha) \in  u$. Then $ d((x,\al),  v)
\leq
\al + d(x, \langle v\rangle_0)$.
If $\al> d((x,\al),  \underline{v})$,
then
$
d((x,\al),  v)
=
d((x,\al),  \underline{v}).
$
Hence
\[
d((x,\al),  v)
\leq
 \left\{
  \begin{array}{ll}
  H_{\rm end}(u, v),     & \al >  H_{\rm end}(u, v),
\\
  \al + H(\langle u \rangle_0, \langle v\rangle_0),  &   \al\in [0,1].
  \end{array}
\right.
\]
Thus for $u,v\in P^1_{USC} (X)$ with $H_{\rm end} (u,v) < 1$
\begin{equation}\label{ser}
  H_{\rm send}(u,v) \leq H_{\rm end}(u,v) + H(\langle u\rangle_0, \langle v\rangle_0).
\end{equation}
So
(\romannumeral3) is true.
We can see that if one of the following conditions (\romannumeral1) $H_{\rm end}(u,v) =0$,
  (\romannumeral2) $H(\langle u\rangle_0, \langle v\rangle_0) =0 $, and
 (\romannumeral3) $H(\langle u\rangle_0, \langle v\rangle_0) = +\infty$ hold,
then ``='' can be obtained in \eqref{ser}.
However, the converse is false.

(\romannumeral4) follows immediately from (\romannumeral1), (\romannumeral2) and (\romannumeral3).
Below we verify (\romannumeral5).

Suppose that
$\lim_{n\to \infty}^{(K)} u_n = u $. To show
$\lim_{n\to \infty}^{(K)}  \underline{u_n } = \underline{u} $
and
$ \lim_{n\to \infty}^{(K)} \langle u_n \rangle_0 = \langle u \rangle_0$,
we only need to show
that
\begin{gather*}
 \underline{u} \subseteq \liminf_{n\to\infty} \underline{u_n}, \
\limsup_{n\to\infty} \underline{u_n} \subseteq \underline{u},\\
\langle u\rangle_0 \subseteq \liminf_{n\to\infty} \langle u_n \rangle_0,\
\limsup_{n\to\infty} \langle u_n \rangle_0  \subseteq   \langle u \rangle_0.
\end{gather*}

Let $(x,\al) \in \underline{u}$. If $\al=0$, then clearly  $(x,\al) \in \liminf_{n\to\infty} \underline{u_n}$.
If
 $\al>0$, then $(x,\al) \in u$, and thus $(x,\al) \in  \liminf_{n\to\infty} u_n \subseteq \liminf_{n\to\infty} \underline{u_n}$.
So $\underline{u} \subseteq \liminf_{n\to\infty} \underline{u_n}$.

Let $(x,\al) \in \limsup_{n\to\infty} \underline{u_n}$.
If $\al=0$, then clearly  $(x,\al) \in \underline{u}$.
If
 $\al>0$, then $(x,\al) \in \limsup_{n\to\infty} u_n = u \subseteq \underline{u}$.
So $\limsup_{n\to\infty} \underline{u_n} \subseteq \underline{u}$.

Let $x\in \langle u\rangle_0 $.
Then $(x,0) \in u = \liminf_{n\to\infty} u_n $. Thus there is a sequence $\{(x_n, \al_n)\}$
such that
$(x_n, \al_n)\in u_n$, $n=1,2,\ldots$ and $(x,0)=\lim_{n\to \infty} (x_n,\al_n)$.
Hence
$x_n\in \langle u_n \rangle_0$ and $x=\lim_{n\to \infty} x_n$.
So
 $\langle u\rangle_0 \subseteq \liminf_{n\to\infty} \langle u_n \rangle_0$.

Let $x\in \limsup_{n\to\infty} \langle u_n \rangle_0$.
Then
there is a sequence $\{x_{n_i}\}$
such that
$x_{n_i} \in \langle u_{n_i} \rangle_0$, $i=1,2,\ldots$
and
$x= \lim_{i\to \infty} x_{n_i}$.
Thus
$(x,0) = \lim_{i\to \infty} (x_{n_i}, 0) \in \limsup _{n\to\infty}  u_n =u$.
Hence
$x\in \langle u \rangle_0$.
So
$\limsup_{n\to\infty} \langle u_n \rangle_0  \subseteq   \langle u \rangle_0$.

Suppose that
$\lim_{n\to \infty}^{(K)}  \underline{u_n } = \underline{u} $
and
$ \lim_{n\to \infty}^{(K)} \langle u_n \rangle_0 = \langle u \rangle_0$.
To show
$\lim_{n\to \infty}^{(K)} u_n = u $,
we only need to show that
\begin{equation*}
  u \subseteq \liminf_{n\to\infty} u_n, \  \limsup_{n\to\infty} u_n \subseteq u.
\end{equation*}

Let $(x,\al)\in u$.
If $\al=0$, then $x\in \langle u \rangle_0 = \lim_{n\to \infty}^{(K)} \langle u_n \rangle_0$.
Thus there is a sequence $\{x_n\}$ such that $x_n \in \langle u_n \rangle_0$, $n=1,2,\ldots$
and
$x = \lim_{n\to \infty} x_n$.
Hence
$(x,\alpha) = (x,0) = \lim_{n\to \infty} (x_n, 0) \in \liminf_{n\to\infty} u_n$.

If
$\al>0$, then from
 $(x,\al) \in \underline{u}= \lim_{n\to \infty}^{(K)}  \underline{u_n }$,
 there is a sequence  $\{(x_n,\al_n)\}$ such that $(x,\al) = \lim_{n\to \infty} (x_n,\al_n)$,
and
$(x_n,\al_n) \in  \underline{u_n}$ and $\al_n>0$ for $n=1,2,\ldots$.
Thus
$(x_n,\al_n) \in   u_n$, $n=1,2,\ldots$ and hence
$(x,\al) \in  \liminf_{n\to\infty} u_n $.

Let $(x,\al) \in \limsup_{n\to\infty} u_n$.
Then $(x,\al) \in \limsup_{n\to\infty} \underline{u_n} = \underline{u}$.
Note that
$x \in \limsup_{n\to\infty} \langle u_n \rangle_0 = \langle u \rangle_0$,
thus
$(x,\al) \in u$.

\end{proof}

Let $u,v \in  F^1_{USC} (X) $. Then $\overrightarrow{u}, \overrightarrow{v} \in P^1_{USC} (X) $, and the following holds
\begin{gather*}
 H_{\rm send} (u,v)  =  H_{\rm send}  (\overrightarrow{u}, \overrightarrow{v}),
  \\
  H_{\rm end} (u,v)  =  H_{\rm end} (\overrightarrow{u}, \overrightarrow{v}),
\\
H([u]_0, [v]_0) = H(\langle   \overrightarrow{u} \rangle_0, \langle   \overrightarrow{v} \rangle_0).
\end{gather*}
Thus (\romannumeral1), (\romannumeral2) and (\romannumeral3) in Theorem \ref{pseu} imply that
\begin{gather}
  H_{\rm end}(u, v) \leq H_{\rm send}(u, v),  \label{emsf}
\\
   H( [u]_0, [v]_0) \leq H_{\rm send}(u, v),  \label{umsf}
\end{gather}
 and if $H_{\rm end} (u,v) < 1$, then
\begin{equation}\label{secf}
  H_{\rm send}(u,v) \leq H_{\rm end}(u,v) + H([u]_0, [v]_0).
\end{equation}

\begin{pp} \label{sge}
Given $u$, $u_n$, $n=1,2,\ldots$ in $F^1_{USC} (X)$.
Then
\begin{enumerate}
\renewcommand{\labelenumi}{(\roman{enumi})}

      \item
  $H_{\rm send} (u_n, u) \to 0$ if and only if $  H_{\rm end} (u_n, u) \to 0 $ and  $ H([u_n]_0, [u]_0) \to 0$.

\item
 $   \lim_{n\to \infty}^{(K)} {\rm send}\, u_n= {\rm send}\,u $ if and only if
$\lim_{n\to \infty}^{(\Gamma)}  u_n = u $
and
$ \lim_{n\to \infty}^{(K)} [u_n]_0 = [u]_0$.

  \end{enumerate}
\end{pp}

\begin{proof}
  (\romannumeral1) follows immediately from
\eqref{emsf}, \eqref{umsf} and \eqref{secf}.
(\romannumeral1) can be seen as a special case of the clause (\romannumeral4) in Theorem \ref{pseu}.
 (\romannumeral2)
can be seen as
a special case of the clause (\romannumeral5) in Theorem \ref{pseu}.

\end{proof}

We suspect that
the following Theorem \ref{hkg}
is an already known conclusion, however we can't find this conclusion in the references that we can obtain.
  It can be proved in a similar fashion to Theorem 4.1 in \cite{huang}.
In this paper, we exclude the case that $C=\emptyset$.

\begin{tm} \cite{huang} \label{hkg}
Suppose that $C$, $C_n$ are sets in $C(X)$, $n=1,2,\ldots$. Then $H(C_n, C) \to 0$ implies that $\lim_{n\to \infty}^{(K)} C_n \, =C$.
\end{tm}

\begin{re} \label{hkr}
  {\rm
From Theorem \ref{hkg}, we obtain that
 for $u$, $u_n$, $n=1,2,\ldots$ in $P^1_{USC} (X)$:
(\romannumeral1) $H_{\rm end}(u_n,u) \to 0$
implies that
$\lim_{n\to \infty}^{(K)}  \underline{u_n} = \underline{u} $;
(\romannumeral2) $H_{\rm send}(u_n,u) \to 0$
implies that
$\lim_{n\to \infty}^{(K)}  u_n = u$.

So
for $u$, $u_n$, $n=1,2,\ldots$ in $F^1_{USC} (X)$:
(\romannumeral3) $H_{\rm end}(u_n,u) \to 0$
implies that
$\lim_{n\to \infty}^{(\Gamma)}  u_n = u $;
(\romannumeral4) $H_{\rm send}(u_n,u) \to 0$
implies that
$\lim_{n\to \infty}^{(K)} {\rm send}\, u_n = {\rm send}\, u$.

}
\end{re}

The converses of the implications in clauses (\romannumeral1), (\romannumeral2), (\romannumeral3) and (\romannumeral4) in Remark \ref{hkr}
are false.
Let $u=[0, +\infty)_{F(\mathbb{R})}$ and for $n=1,2,\ldots$,
let $u_n= [0,n]_{F(\mathbb{R})}$.
Then $\lim_{n\to \infty}^{(K)} {\rm send}\, u_n = {\rm send}\, u$,
but
$H_{\rm end}(u_n,u) = 1 \not\to 0$.
So combined with Proposition \ref{sge},
the converses of the implications in (\romannumeral3) and (\romannumeral4) are false,
and
thus
the converses of the implications in (\romannumeral1) and (\romannumeral2) are false.

\section{ Level characterizations of $\Gamma$-convergence } \label{lcg}

In this section, we investigate the level characterizations
of
the $\Gamma$-convergence.
It is shown that
under some conditions,
the $\Gamma$-convergence of fuzzy sets can be decomposed to
the Kuratowski convergence of certain $\al$-cuts.

We need the following conclusion, which is Lemma 2.1 in \cite{huang}.

\begin{lm} \cite{huang} \label{infe}
Let $(X,d)$ be a metric space,
and
$C_{n}$, $n=1,2,\ldots$, be a sequence of sets in $X$.
Suppose that $x\in X$.
Then
\\
(\romannumeral1) \
  $x \in \liminf_{n\rightarrow \infty} C_{n}$
  if and only if
$\lim_{n\to \infty} d(x, C_n) = 0$,
\\
(\romannumeral2) \
  $x \in \limsup_{n\rightarrow \infty} C_{n}$
  if and only if there is a subsequence $\{C_{n_k}\}$ of $\{C_n\}$
such
that
$\lim_{k\to \infty} d(x, C_{n_k}) = 0$.
\end{lm}

Rojas-Medar and Rom\'{a}n-Flores \cite{rojas} have introduced
the following useful property
of the
$\Gamma$-convergence.

\begin{tm}
\cite{rojas} \label{Gcln}
   Suppose that $u$, $u_n$, $n=1,2,\ldots$, are fuzzy sets
  in $F^1_{USC} (X)$.
  Then
$\lim_{n\to \infty}^{(\Gamma)}  u_n = u $ if and only if for
all $\al\in (0,1],$
\begin{equation} \label{gcln}
   \{u>\alpha\}
   \subseteq
\liminf_{n\to \infty}[u_n]_{\alpha}
\subseteq
\limsup_{n\to \infty}[u_n]_\alpha
\subseteq
 [u]_\alpha.
\end{equation}

\end{tm}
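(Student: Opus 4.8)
The plan is to unwind the definition of $\Gamma$-convergence, namely ${\rm end}\, u = \lim_{n\to\infty}{\rm end}\, u_n\,(K)$, into the pair of Kuratowski inclusions ${\rm end}\, u \subseteq \liminf_n {\rm end}\, u_n$ and $\limsup_n {\rm end}\, u_n \subseteq {\rm end}\, u$ (the relation $\liminf\subseteq\limsup$ being automatic), and to prove each direction of the equivalence by translating membership in an endograph, $(x,t)\in{\rm end}\, u \Leftrightarrow u(x)\geq t$, into membership in level sets. Two preliminary remarks organize the argument: the chain in \eqref{gcln} always contains the trivial middle inclusion $\liminf_n[u_n]_\alpha\subseteq\limsup_n[u_n]_\alpha$, so only the outer two inclusions carry content; and the slice of any endograph at height $t=0$ is all of $X\times\{0\}$, so the case $t=0$ is vacuous throughout and I may restrict attention to $t\in(0,1]$.

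For the forward direction, assume $u_n\str{\Gamma}{\longrightarrow}u$ and fix $\alpha\in(0,1]$. To get $\{u>\alpha\}\subseteq\liminf_n[u_n]_\alpha$, I would take $x$ with $u(x)>\alpha$, choose $\beta\in(\alpha,u(x)]$, and use $(x,\beta)\in{\rm end}\, u\subseteq\liminf_n{\rm end}\, u_n$ to produce $(x_n,t_n)\in{\rm end}\, u_n$ with $(x_n,t_n)\to(x,\beta)$; since $t_n\to\beta>\alpha$ we have $u_n(x_n)\geq t_n>\alpha$ eventually, so $x_n\in[u_n]_\alpha$ for large $n$, and I repair the finitely many initial terms using normality (which guarantees $[u_n]_\alpha\neq\emptyset$) to obtain a full sequence in $[u_n]_\alpha$ converging to $x$. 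For the reverse outer inclusion $\limsup_n[u_n]_\alpha\subseteq[u]_\alpha$, I would take $x=\lim_j x_{n_j}$ with $x_{n_j}\in[u_{n_j}]_\alpha$; then $(x_{n_j},\alpha)\in{\rm end}\, u_{n_j}$ converges to $(x,\alpha)$, so $(x,\alpha)\in\limsup_n{\rm end}\, u_n={\rm end}\, u$, i.e.\ $u(x)\geq\alpha$.

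For the reverse direction, assume \eqref{gcln} holds for every $\alpha\in(0,1]$ and aim at the two endograph inclusions. The inclusion $\limsup_n{\rm end}\, u_n\subseteq{\rm end}\, u$ is the easier one: given $(x,t)=\lim_j(x_{n_j},t_{n_j})$ with $u_{n_j}(x_{n_j})\geq t_{n_j}$ and $t>0$, for each $\alpha<t$ the inequality $t_{n_j}>\alpha$ holds eventually, placing $x_{n_j}\in[u_{n_j}]_\alpha$ along a subsequence, whence $x\in\limsup_n[u_n]_\alpha\subseteq[u]_\alpha$; letting $\alpha\uparrow t$ gives $u(x)\geq t$. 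The inclusion ${\rm end}\, u\subseteq\liminf_n{\rm end}\, u_n$ is where the main obstacle lies: the hypothesis only supplies, for each $x$ with $u(x)\geq t$ and each small $\var>0$, a sequence approximating $x$ inside $[u_n]_{t-\var}$ (because $u(x)>t-\var$ puts $x\in\{u>t-\var\}\subseteq\liminf_n[u_n]_{t-\var}$), i.e.\ it approximates the point $(x,t-\var)$ rather than $(x,t)$. To close this gap I would run a diagonal argument: choose $\var_k\downarrow0$, for each $k$ extract indices $N_k\uparrow\infty$ beyond which the $k$-th approximating sequence lies within $\var_k$ of $x$, and splice these blocks together, setting $(\hat x_n,\hat t_n)=(x_n^{(k)},t-\var_k)$ for $N_k\leq n<N_{k+1}$. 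Since $u_n(\hat x_n)\geq t-\var_k=\hat t_n$ we have $(\hat x_n,\hat t_n)\in{\rm end}\, u_n$, and both $\hat t_n\to t$ and $\hat x_n\to x$, yielding $(x,t)\in\liminf_n{\rm end}\, u_n$ and completing the proof. The only delicate points are the bookkeeping of this diagonal selection and the consistent use of the strict-versus-closed distinction ($\{u>\alpha\}$ on the left, $[u]_\alpha$ on the right), which is exactly what reconciles the two outer inclusions despite the mismatch between open and closed sublevel behaviour.
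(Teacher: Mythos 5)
Your proof is correct. Note that the paper itself gives no proof of this theorem: it is quoted from Rojas-Medar and Rom\'{a}n-Flores (Proposition 3.5 of \cite{rojas}, stated there for $E^m$), and the accompanying remark only asserts that the extension to $F_{USC}(X)$ ``can be checked.'' Your argument is exactly such a check, and it is sound: the forward direction correctly lifts level-set membership to endograph membership (with the finitely-many-initial-terms repair justified because every $\al$-cut of a member of $F_{USC}(X)$ is nonempty by definition), and the reverse direction's diagonal splicing over $\var_k\downarrow 0$ is the right device for passing from approximations of $(x,t-\var)$ to an approximation of $(x,t)$; since nothing in the argument uses convexity or compactness of the cuts, it establishes the result on all of $F_{USC}(X)$ as the paper claims.
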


\begin{proof}

\textbf{Sufficiency}. \ Suppose that \eqref{gcln} is true for all $\al\in (0,1]$.
To show that
$\lim_{n\to \infty}^{(\Gamma)}  u_n = u $, it suffices to prove that
${\rm end}\, u \subseteq \liminf_{n\to\infty} {\rm end}\, u_n $
and
 $\limsup_{n\to \infty}{\rm end}\, u_n \subseteq {\rm end}\, u $.

To show that ${\rm end}\, u \subseteq \liminf_{n\to \infty} {\rm end}\, u_n $,
let
$(x,\al) \in {\rm end}\, u$. We only need to show
that
$(x,\al) \in \liminf_{n\to \infty} {\rm end}\, u_n $.
It suffices to verify,
by
Lemma \ref{infe}, that
\begin{equation}\label{lim}
\lim_{n \to \infty} \overline{d}((x,\al), {\rm end}\, u_n  ) =0.
\end{equation}
If $\al=0$, then clearly \eqref{lim} is true.
Suppose $\al\in (0,1]$. Then
for each $k\in \mathbb{N}$,
$x\in \{u > \al(1-\frac{1}{2k})    \}$,
and therefore
from \eqref{gcln},
 $x\in \liminf_{n\to \infty} [u_n]_{\al(1-\frac{1}{2k})}$.
By
Lemma \ref{infe}, $\lim_{n \to \infty} d(x, [u_n]_{\al(1-\frac{1}{2k} )} ) =0$.
So
 there is an $N$ such that
$d(x,   [u_n]_{\al(1-\frac{1}{2k} )  }    ) < \frac{1}{2k} $
for all
$n\geq N$.
Hence
$\overline{d} ( (x,\al),   {\rm end}\, u_n   )  < \frac{1}{k}  $
for all
$n\geq N$.
From the arbitrariness of $k$, we thus have that \eqref{lim} is true.

To show that $\limsup_{n\to \infty}{\rm end}\, u_n \subseteq {\rm end}\, u $, let
$(x, \al) \in \limsup_{n\to \infty}{\rm end}\, u_n$.
it suffices
to verify that $(x, \al)  \in {\rm end}\, u $.
If $\al=0$, then clearly $(x, \al)  \in {\rm end}\, u $.
Suppose that $\al>0$. Then there is an sequence $\{(x_{n_k}, \al_{n_k}) \}_{k=1}^{+\infty}$
satisfying
 $(x_{n_k}, \al_{n_k}) \in {\rm end}\, u_{n_k}$ for $k=1,2,\ldots$
and
$\lim_{k\to \infty} \overline{d} (x_{n_k}, \al_{n_k}), (x,\al)   ) = 0$.
So for each $m\in \mathbb{N}$,
$x \in \limsup_{n\to \infty} [u_n]_{\al(1-\frac{1}{m})}$,
and hence, by
\eqref{gcln},
$x \in  [u]_{\al(1-\frac{1}{m})}$.
From the arbitrariness of $m$,
we have $x\in [u]_\al$, and thus $(x,\al) \in {\rm end}\, u$.

\textbf{Necessity}. \ Suppose that $\lim_{n\to \infty}^{(\Gamma)}  u_n = u $.
Let $\al\in (0,1]$.
To show
\eqref{gcln}, we only need to verify that
$\{u>\alpha\}
   \subseteq
\liminf_{n\to \infty}[u_n]_{\alpha}$
and $
\limsup_{n\to \infty}[u_n]_\alpha
\subseteq
 [u]_\alpha$.

Suppose that
$x\in \{u>\alpha\}$.
Then
there is a $\beta>\al$ such that $(x, \beta) \in {\rm end}\, u = \liminf_{n\to \infty} {\rm end}\, u_n$.
Hence
there is a sequence $\{(x_n, \beta_n)\}_{n=1}^{+\infty}$
satisfying
 $(x_n, \beta_n) \in  {\rm end}\, u_n$, $n=1,2,\ldots$ and $\lim_{n\to \infty} \overline{d} ((x_n, \beta_n), (x, \beta )  )=0  $.
Notice that there is an $N \in \mathbb{N}$ such that
$x_n \in \{u_n >\al \}$ for all $n\geq N$.
So
$x \in \liminf_{n\to \infty}[u_n]_{\alpha}$.
Thus we have
$\{u>\alpha\} \subseteq  \liminf_{n\to \infty}[u_n]_{\alpha}$.

Suppose that $x\in \limsup_{n\to \infty}[u_n]_\alpha$.
Then
there is an sequence $\{x_{n_j}\}_{j=1}^{+\infty}$ satisfying
$x_{n_j} \in [u_{n_j}]_{\alpha}$, $j=1,2,\ldots$
and
$\lim_{j\to \infty} d(x_{n_j}, x) = 0$.
Hence
$\lim_{j\to \infty} \overline{d}((x_{n_j},\al), (x,\al)) = 0$.
Notice
$(x_{n_j},\al) \in {\rm end}\, u_{n_j}$ and therefore $(x,\al) \in \limsup_{n\to \infty}{\rm end}\, u_n = {\rm end}\, u$.
So
$x\in [u]_\al$.
Thus we have
$
\limsup_{n\to \infty}[u_n]_\alpha
\subseteq
 [u]_\alpha$.

\end{proof}

\begin{re}
{\rm
Rojas-Medar and Rom\'{a}n-Flores (Proposition 3.5 in \cite{rojas})
presented
the
statement in Theorem \ref{Gcln} when $u$, $u_n$, $n=1,2,\ldots$, are fuzzy sets in $E^m$.
Since we can't find a proof for Proposition 3.5 in \cite{rojas}, we give a proof here.
}
\end{re}

Corollary 3.2.13 in \cite{kle}
states that
for each net of subsets $\{A_n, n\in D\}$ in a topological space,
  $\liminf C_{n}$ and $\limsup C_{n}$ according to Definition 3.1.4 in \cite{kle} are closed sets.
From the fact illustrated in Remark \ref{ksc}, we know that
Corollary 3.2.13 in \cite{kle} implies
the following Theorem \ref{infc}.

Theorem \ref{infc} is Theorem 2.1 in \cite{huang}. Of course, the conclusion that
$\limsup_{n\rightarrow \infty} C_{n}$ are closed sets in $(X, d)$ in Theorem \ref{infc}
 can also be deduced from
the fact that $\limsup_{n\rightarrow \infty} C_{n}= \bigcap\limits_{n=1}^{\infty}   \overline{   \bigcup\limits_{m\geq n}C_{m}    }
$.

\begin{tm} \cite{huang, kle} \label{infc}
Let $(X,d)$ be a metric space
and
let
$\{C_{n}\}$ be a sequence of sets in $X$.
  Then
    $\liminf_{n\rightarrow \infty} C_{n}$ and $\limsup_{n\rightarrow \infty} C_{n}$ are closed sets in $(X, d)$.
\end{tm}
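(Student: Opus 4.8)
The plan is to treat the two sets separately, since $\limsup_{n\to\infty} C_n$ is already presented in a form that makes closedness transparent, while $\liminf_{n\to\infty} C_n$ needs a short construction.

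For $\limsup_{n\to\infty} C_n$ I would simply invoke the representation
$$\limsup_{n\to\infty} C_n = \bigcap_{n=1}^{\infty} \overline{\bigcup_{m\geq n} C_m}$$
recorded in the definition. Each $\overline{\bigcup_{m\geq n} C_m}$ is a closure, hence closed, and an arbitrary intersection of closed sets is closed. So nothing further is required here.

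For $\liminf_{n\to\infty} C_n$ I would argue directly from the sequential definition by a diagonal construction. Let $x$ lie in the closure of $\liminf_n C_n$; if this set is empty there is nothing to prove, so assume it is nonempty and choose $y^{(k)} \in \liminf_n C_n$ with $y^{(k)} \to x$. By definition each $y^{(k)}$ admits a witnessing sequence $(x_n^{(k)})_{n\geq 1}$ with $x_n^{(k)} \in C_n$ for all $n$ and $x_n^{(k)} \to y^{(k)}$ as $n \to \infty$. For each $k$ pick $N_k$ with $d(x_n^{(k)}, y^{(k)}) < 1/k$ whenever $n \geq N_k$, arranged so that $N_1 < N_2 < \cdots$. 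Define $x_n \in C_n$ by $x_n = x_n^{(1)}$ for $n < N_1$ and $x_n = x_n^{(k)}$ for $N_k \leq n < N_{k+1}$. Then for $N_k \leq n < N_{k+1}$ the triangle inequality gives $d(x_n, x) < 1/k + d(y^{(k)}, x)$, and since both terms tend to $0$ as $n \to \infty$, we obtain $x_n \to x$ with $x_n \in C_n$, i.e. $x \in \liminf_n C_n$.

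The main obstacle is the bookkeeping in the diagonal step: one must guarantee that $x_n$ is selected from $C_n$ for \emph{every} index $n$, so that the produced sequence is admissible, while simultaneously forcing convergence to $x$; this is exactly why the $N_k$ are taken strictly increasing and the initial terms are filled in from a single witnessing sequence. A cleaner route I would keep in reserve is the distance-function reformulation: one checks that $\liminf_n C_n = \{x \in X : \lim_{n\to\infty} d(x, C_n) = 0\}$, and since $x \mapsto \limsup_{n\to\infty} d(x, C_n)$ is $1$-Lipschitz (being a $\limsup$ of the $1$-Lipschitz maps $x \mapsto d(x, C_n)$), its zero set is closed. This avoids the index juggling entirely, at the cost of first verifying the reformulation.
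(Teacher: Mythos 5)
Your proof is correct. The paper itself gives no proof of this statement---it is quoted from the earlier reference \cite{huang}---so there is no in-paper argument to compare against; but both halves of your argument are sound and standard. The $\limsup$ part follows immediately from the intersection-of-closures formula already recorded in the definition, and your diagonal construction for $\liminf$ is carried out with the right care: taking the $N_k$ strictly increasing and filling in the initial indices from a single witnessing sequence guarantees that the assembled sequence satisfies $x_n \in C_n$ for every $n$ while still converging to $x$. The reserve route via $x \mapsto \limsup_{n\to\infty} d(x, C_n)$ being $1$-Lipschitz is also valid and is arguably the cleanest way to see the result; either version would serve as a complete proof of Theorem \ref{infc}.
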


\begin{tm} \label{Gclnre}
   Suppose that $u$, $u_n$, $n=1,2,\ldots$, are fuzzy sets
  in $F^1_{USC} ( X )$.
  Then
 $\lim_{n\to \infty}^{(\Gamma)}  u_n = u $ if and only if for
all $\al\in (0,1],$
\begin{equation*}
  \overline{     \{u>\alpha\}     }
   \subseteq
\liminf_{n\to \infty}[u_n]_{\alpha}
\subseteq
 \limsup_{n\to \infty}[u_n]_\alpha
\subseteq
 [u]_\alpha.
\end{equation*}
\end{tm}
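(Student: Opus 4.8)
The plan is to derive Theorem~\ref{Gclnre} directly from Theorem~\ref{Gcln}, exploiting the fact that the two chains of inclusions differ only in their leftmost term, where $\{u>\al\}$ is replaced by its closure $\overline{\{u>\al\}}$. Since the middle inclusion $\liminf_{n\to\infty}[u_n]_\al\subseteq\limsup_{n\to\infty}[u_n]_\al$ and the right inclusion $\limsup_{n\to\infty}[u_n]_\al\subseteq[u]_\al$ appear verbatim in both statements, it suffices to prove that, for each fixed $\al\in(0,1]$,
\begin{equation*}
\{u>\al\}\subseteq\liminf_{n\to\infty}[u_n]_\al
\iff
\overline{\{u>\al\}}\subseteq\liminf_{n\to\infty}[u_n]_\al .
\end{equation*}

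First I would observe that the implication from right to left is immediate, since $\{u>\al\}\subseteq\overline{\{u>\al\}}$ always holds, so any set containing the closure contains the original set. The content is therefore in the left-to-right implication, and here the key ingredient is Theorem~\ref{infc}: for any sequence of sets, $\liminf_{n\to\infty}[u_n]_\al$ is closed. Assuming $\{u>\al\}\subseteq\liminf_{n\to\infty}[u_n]_\al$ and passing to closures on both sides, I would use monotonicity of the closure operator together with the fact that the right-hand side already equals its own closure to conclude $\overline{\{u>\al\}}\subseteq\liminf_{n\to\infty}[u_n]_\al$.

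Combining these two observations gives the claimed equivalence of the two leftmost inclusions for every $\al\in(0,1]$, hence the equivalence of the full chains; feeding this into Theorem~\ref{Gcln} yields Theorem~\ref{Gclnre}. I do not anticipate a genuine obstacle here: the whole argument rests on the closedness of $\liminf_{n\to\infty}[u_n]_\al$, which is already available from Theorem~\ref{infc}, so the proof is essentially a topological bookkeeping step rather than a substantive new estimate. The only point requiring minor care is to confirm that nothing in the hypotheses of Theorem~\ref{Gcln} is disturbed by replacing $\{u>\al\}$ with $\overline{\{u>\al\}}$, which holds precisely because the replacement is forced to agree on a closed target set.
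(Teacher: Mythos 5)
Your argument is correct and is exactly the paper's intended proof: the paper also derives Theorem~\ref{Gclnre} directly from Theorems~\ref{Gcln} and~\ref{infc}, using the closedness of $\liminf_{n\to\infty}[u_n]_\al$ to pass from $\{u>\al\}$ to its closure. No further comment is needed.
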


\begin{proof}
  The desired result follows from Theorems \ref{Gcln} and \ref{infc}.
\end{proof}

\begin{re}\label{sur}
{\rm
 Suppose that $u$, $u_n$, $n=1,2,\ldots$, are fuzzy sets
  in $F^1_{USC} ( X )$.
 If $\lim_{n\to \infty}^{(\Gamma)}  u_n = u $,
then by Theorems
\ref{infc} and \ref{Gclnre},
 $[u]_0 =  \overline{\{ u>0 \}}
   \subseteq
\liminf_{n\to \infty}[u_n]_{0}$.
In this case
$[u]_0
\subsetneqq
\liminf_{n\to \infty}[u_n]_0
$
could happen.
For example, let $u= \widehat{0}_{F(\mathbb{R})}$
and for $n=1,2,\ldots$,
define $u_n \in F^1_{USCB}(\mathbb{R})$ as
\[
 u_n = \left\{
            \begin{array}{ll}
              1, & x=0,\\
     1/n, & x\in (0,1],\\
0,& \mbox{otherwise}.
            \end{array}
          \right.
\]
Then $H_{\rm end}(u_n, u) \to 0$, and therefore from Remark \ref{hkr} $\lim_{n\to \infty}^{(\Gamma)}  u_n = u $.
And
$[u]_0 = \{0\}
\subsetneqq
[0,1]=
\liminf_{n\to \infty}[u_n]_0
$.

By Proposition \ref{sge},
$\lim_{n\to \infty}^{(\Gamma)}  u_n = u $ and
$[u]_0
\supseteq
\limsup_{n\to \infty}[u_n]_0
$
if and only if
 $   \lim^{(K)}_{n\to \infty} {\rm send}\, u_n= {\rm send}\,u $.
 }
\end{re}

Let $u\in F(X)$. Denote
\begin{gather*}
  D(u) :=         \{ \al\in (0,1):  [u]_\al \nsubseteq \overline{\{u>\al\}} \},  \\
   P(u) :=         \{ \al\in (0,1):  \overline{\{u>\al\}} \subsetneqq  [u]_\al \}.
\end{gather*}
A
 number $\al$ in $P(u)$
is
called a platform point of $u$.
Clearly $P(u) \subseteq D(u)$.
$P(u)\subsetneqq D(u) $ could happen.
See Example \ref{pdr}.

\begin{eap}\label{pdr}
{\rm
  let $u\in F(\mathbb{R})$ defined by
\[u(x)
=\left\{
   \begin{array}{ll}
    1,       & x\in (0,1),\\
     0.6,     & x\in [1,3],  \\
     0,        &  x\in \mathbb{R}\setminus (0,3].
   \end{array}
 \right.
\]
Then $P(u)=\emptyset$ and $D(u)=\{0.6 \}$.
So $P(u) \subsetneqq D(u)$.
}
\end{eap}

 Theorem 5.1 in \cite{huang} says that
$D(u)$
is at most countable when
$u\in F (\mathbb{R}^m)$.
In a similar manner, we can show the following Theorem \ref{ldc}.

\begin{tm} \label{ldc}
   Let $u\in F(l^2)$. Then the set $D(u)$
   is at most countable.
\end{tm}

\begin{proof}
  The proof is similar to
that of Theorem 5.1 in \cite{huang}.
A sketch of the proof is given below.

Similarly as in \cite{huang}, for $u\in F(l^2)$, $t\in l^2$ and $r\in \mathbb{R}^+$,
we can define
$S_{u, t, r}(\cdot, \cdot) :  \mathbf{S}^1 \times [0,1] \to \{-\infty \} \cup \mathbb{R} $
by
\[
S_{u, t, r} (e,\al)
=\left\{
   \begin{array}{ll}
 -\infty,  & \mbox{if} \  [u]_\al \cap \overline{B(t,r)} = \emptyset,
\\
\sup \{     \langle e, x-t \rangle :  x\in  [u]_\al \cap \overline{B(t,r)}  \}, & \mbox{if} \  [u]_\al \cap \overline{B(t,r)} \not= \emptyset,
   \end{array}
 \right.
\]
where
$\mathbf{S}^1 :=\{ e\in l^2: \|e\|=1\}$ and $\overline{B(t,r)}:= \{x\in l^2: \|x-t\| \leq r \}$.

Similarly as in \cite{huang},
we can define $D(u,t,r,e)$, which is the discontinuous point of $S_{u, t, r}(e, \cdot)$.

Proceed as in the proof of Lemma A.1. in \cite{huang}, we can show
the conclusion corresponding to Lemma A.1. in \cite{huang}:
$D(u,t,r) = \cup_{e\in \mathbf{S}^1} D(u,t,r,e)$ is at most countable.

Here we mention that
 the narrative of the proof of formula (A.6) in Theorem 5.1 in \cite{huang} can be slightly simplified.
The detailed operations are performed as follows:
replace
Lines 1 and 2 from the bottom in Page 82 and Lines 1 and 2 in Page 83 in \cite{huang}
by
\begin{itemize}
  \item
Since $2\langle a, b \rangle =  \| a \|^2 + \|b\|^2 - \|a-b\|^2$ for each $a,b \in \mathbb{R}^m$, then
$$
\langle e, x-q\rangle = \frac{    \langle   y-q,    x-q   \rangle    }    {  \|y-q\|    }  =  \frac{ \|x-q\|^2 + \|y-q\|^2 -\|x-y\|^2 } {2 \|y-q\| }
$$
\end{itemize}

Note that
$2\langle a, b \rangle =  \| a \|^2 + \|b\|^2 - \|a-b\|^2$ for each $a,b \in l^2$.
So proceed as in the proof of Theorem 5.1 in \cite{huang}, we obtain the conclusion corresponding to Theorem 5.1 in \cite{huang}: $D(u)$
is at most countable.

\end{proof}

The following Theorem \ref{sdc} is
a generalization of
 Theorem 5.1 in \cite{huang} and Theorem \ref{ldc}.
Its proof is based on the well-known result:
\begin{itemize}
  \item each separable metric space is homeomorphic to a subspace
of the \textbf{Hilbert space} $\bm{l^2}:= \{(x_i)_{i=1}^{+\infty}: \sum_{i=1}^{+\infty} x_i^2 < +\infty \}$.
\end{itemize}

\begin{tm}\label{sdc}
 Let $(X,d)$ be a metric space and $u \in F(X)$. If $([u]_0, d)$ is separable, then the set
 $D(u)$
 is at most countable.
\end{tm}

\begin{proof}
Let $f$ be a homeomorphism
from
$([u]_0, d)$ to a subspace
of
$l^2$.
Consider $u_f \in F(l^2)$ defined by
\[
u_f(t)
=\left\{
   \begin{array}{ll}
     u(f^{-1} (t)), & t\in  f([u]_0), \\
          0, & t\in l^2 \setminus   f([u]_0).
   \end{array}
 \right.
\]
Then by Theorem \ref{ldc}, $D(u_f)$ is at most countable.
To show
that
$D(u)$
 is at most countable, it suffices to show that
$ D(u)= D(u_f)$.

For $S \subseteq [u]_0$,
let
$ \overline{S}^{[u]_0}$ denote the topological closure of $S$ in $([u]_0, d)$.
For
 $W \subseteq f([u]_0)$,
let
 $\overline{W}^{l^2} $ denote
the topological closure of $W$ in $l^2$,
and let
$\overline{W}^{f([u]_0)} $ denote the topological closure of $W$ in $f([u]_0)$, here we see $f([u]_0)$ as a metric subspace of $l^2$.

Note that
for each $x\in [u]_0$, $u(x)=u_f(   f(x))$.
So
for each $\al\in [0,1]$ and each $x\in [u]_0$,
 $$x\in [u]_\al    \Leftrightarrow   f(x) \in [u_f]_\al, $$
and
$$x \notin \overline{\{u>\al\}} \Leftrightarrow  x \notin \overline{\{u>\al\}}^{[u]_0}
  \Leftrightarrow    f(x) \notin \overline{\{u_f>\al\}}^{f([u]_0)}
\Leftrightarrow
f(x) \notin \overline{\{u_f>\al\}}^{l^2}. $$
This
 implies
$D(u) = D(u_f)$,
since
\\
$D(u)=\{\al \in (0,1):  \mbox{ there exists } x \in [u]_\al \mbox{ such that } x \notin \overline{\{u>\al\}}     \}$, and
\\
$ D(u_f)=\{\al\in (0,1): \mbox{ there exists }  f(x) \in [u_f]_\al \mbox{ such that } f(x) \notin \overline{\{u_f>\al\}}^{l^2}   \}$.

\end{proof}

\begin{tl}\label{smc}
   Let $(X,d)$ be a separable metric space and $u \in F(X)$. Then the set
 $D(u)$
 is at most countable.
\end{tl}

\begin{proof}
  Since every subspace of a separable metric space is separable, $([u]_0,d)$ is separable. Thus,
by Theorem \ref{sdc}, $D(u)$ is at most countable.

\end{proof}

\begin{re}{\rm
It is well-known that
  both $\mathbb{R}^m$ and $l^2$ are separable metric space.
Thus
both Theorem 5.1 in \cite{huang}
and
Theorem \ref{ldc}
are special cases of Corollary \ref{smc}, which is a corollary of Theorem \ref{sdc}.
So
Theorem \ref{sdc} is a generalization
of Theorem 5.1 in \cite{huang}
and
Theorem \ref{ldc}.

}
\end{re}

\begin{re} \label{spc}
{\rm
 Theorems \ref{ldc}, \ref{sdc} and Corollary \ref{smc}
remain true if $D(u)$ is replaced by $P(u)$, since $P(u) \subseteq D(u)$.

If $u\in F^1_{USC} (X)$, then clearly $P(u)= D(u)$.
}
\end{re}

\begin{tm} \label{gusc}
    Suppose that $u$, $u_n$, $n=1,2,\ldots$, are fuzzy sets
  in $F^1_{USC} ( X )$. Then the following statements are true.
  \begin{enumerate}
  \renewcommand{\labelenumi}{(\roman{enumi})}

      \item If there is a dense set $P$ in $(0,1)$ such that $[u]_\al=\lim^{(K)}_{n\to \infty} [u_n]_\al $ for $\al\in P$, then $u = \lim_{n\to \infty}^{(\Gamma)}  u_n$.

    \item    If $u = \lim_{n\to \infty}^{(\Gamma)}  u_n $, then $[u]_\al=\lim^{(K)}_{n\to \infty} [u_n]_\al $ for all $\al\in (0,1) \setminus P(u)$.

\end{enumerate}
\end{tm}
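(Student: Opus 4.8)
The plan is to route everything through the reformulation in Theorem~\ref{Gclnre}, which states that $u_n\str{\Gamma}{\longrightarrow}u$ is equivalent to the chain
$$\overline{\{u>\al\}} \subseteq \liminf_{n\to\infty} [u_n]_\al \subseteq \limsup_{n\to\infty}[u_n]_\al \subseteq [u]_\al$$
holding for every $\al\in(0,1]$. The two elementary facts I would lean on are: (a) the monotonicity of the Kuratowski lower and upper limits under inclusion, i.e. $A_n\subseteq B_n$ for all $n$ forces $\liminf_n A_n\subseteq\liminf_n B_n$ and $\limsup_n A_n\subseteq\limsup_n B_n$; and (b) the cut identities $\{u>\al\}=\bigcup_{\gamma>\al}[u]_\gamma$ and $[u]_\al=\bigcap_{\gamma<\al}[u]_\gamma$, which hold for $u\in F_{USC}(X)$ and which, thanks to the density of the relevant parameter set, may be computed using only $\gamma$ drawn from a dense set.

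For statement (ii) the argument is short. Assuming $u_n\str{\Gamma}{\longrightarrow}u$, Theorem~\ref{Gclnre} supplies the displayed chain for each $\al\in(0,1]$. When $\al\in(0,1)\setminus P(u)$, the defining inequality for a platform point fails, so $\overline{\{u>\al\}}=[u]_\al$; the first and last members of the chain then coincide, which squeezes $\liminf_{n}[u_n]_\al=\limsup_{n}[u_n]_\al=[u]_\al$. This is precisely the Kuratowski convergence $[u]_\al=\lim_{n\to\infty}[u_n]_\al\,(K)$.

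Statement (i) is where the real work lies. Here I would fix $\al\in(0,1]$ and establish the two nontrivial inclusions of the chain separately. For the lower inclusion, for each $\gamma\in P$ with $\gamma>\al$ monotonicity together with the hypothesis $[u]_\gamma=\lim_{n}[u_n]_\gamma\,(K)$ gives $[u]_\gamma=\liminf_{n}[u_n]_\gamma\subseteq\liminf_{n}[u_n]_\al$; taking the union over all such $\gamma$ and invoking density of $P$ to recover $\bigcup_{\gamma\in P,\,\gamma>\al}[u]_\gamma=\{u>\al\}$ yields $\{u>\al\}\subseteq\liminf_{n}[u_n]_\al$, and since the lower limit is closed (Theorem~\ref{infc}) passing to closures gives $\overline{\{u>\al\}}\subseteq\liminf_{n}[u_n]_\al$ (with $\al=1$ trivial as $\{u>1\}=\emptyset$). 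For the upper inclusion, for each $\gamma\in P$ with $\gamma<\al$ monotonicity gives $\limsup_{n}[u_n]_\al\subseteq\limsup_{n}[u_n]_\gamma=[u]_\gamma$, and intersecting over such $\gamma$ with the identity $\bigcap_{\gamma\in P,\,\gamma<\al}[u]_\gamma=[u]_\al$ produces $\limsup_{n}[u_n]_\al\subseteq[u]_\al$. The full chain then holds for every $\al\in(0,1]$, and Theorem~\ref{Gclnre} delivers $u_n\str{\Gamma}{\longrightarrow}u$.

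The main obstacle I anticipate is verifying the two density-driven set identities cleanly: that $\bigcup_{\gamma\in P,\,\gamma>\al}[u]_\gamma$ genuinely exhausts $\{u>\al\}$, and that $\bigcap_{\gamma\in P,\,\gamma<\al}[u]_\gamma$ collapses exactly to $[u]_\al$. Both rely on inserting a point of the dense set $P$ strictly between $\al$ and a given level $u(x)>\al$ (respectively, arbitrarily close below $\al$), so that restricting the union/intersection to $\gamma\in P$ loses nothing. These are the only points where density of $P$ is truly used; everything else is routine bookkeeping with monotonicity, so I would spend the care there.
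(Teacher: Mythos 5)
Your proof is correct and follows exactly the route the paper sets up: statement (ii) is the collapse of the chain in Theorem~\ref{Gclnre} at non-platform points, and statement (i) is obtained by sandwiching an arbitrary level $\al$ between levels $\gamma\in P$ via monotonicity of the cuts and of the Kuratowski upper/lower limits, using density of $P$ and the closedness of $\liminf_n[u_n]_\al$ from Theorem~\ref{infc}. The two density-driven identities you flag do hold for $u\in F_{USC}(X)$ (insert $\gamma\in P\cap(\al,u(x))$ for the union, and use $\sup\bigl(P\cap(0,\al)\bigr)=\al$ for the intersection), so there is no gap.
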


\begin{proof}
 The proof of (\romannumeral1)
is similar to
  ``(\romannumeral2) $\Rightarrow$ (\romannumeral1)''
in the proof of
   Theorem 6.2 in \cite{huang}.
 (\romannumeral2) follows immediately from Theorem \ref{Gclnre}.

\end{proof}

The following theorem gives a condition under which
the $\Gamma$-convergence of fuzzy sets can be
decomposed to
the Kuratowski convergence of certain $\al$-cuts.

\begin{tm} \label{gdm}
 Suppose that $u$, $u_n$, $n=1,2,\ldots$, are fuzzy sets
  in $F^1_{USC} ( X )$. If $([u]_0, d)$ is separable, then the following are equivalent:

\begin{enumerate}
  \renewcommand{\labelenumi}{(\roman{enumi})}

    \item $\lim_{n\to \infty}^{(\Gamma)}  u_n = u $;

      \item $\lim^{(K)}_{n\to \infty} [u_n]_\al = [u]_\al$ holds a.e. on $\al \in (0,1)$;

    \item    $\lim^{(K)}_{n\to \infty} [u_n]_\al = [u]_\al$ holds for all $\al\in (0,1) \setminus P(u)$;

    \item     There is a dense subset $P$ of $(0,1) \backslash P(u)$ such that $\lim^{(K)}_{n\to \infty} [u_n]_\alpha = [u]_\al$ holds for
$\al\in P$;

 \item There is a countable dense subset $P$ of $(0,1) \backslash P(u)$ such that $\lim^{(K)}_{n\to \infty} [u_n]_\alpha = [u]_\al$ holds for $\al\in P$.

\end{enumerate}
\end{tm}

\begin{proof}
(\romannumeral1)$\Rightarrow$(\romannumeral3) is (\romannumeral2) of Theorem \ref{gusc}.
Clearly
 (\romannumeral3)$\Rightarrow$(\romannumeral4)$\Rightarrow$(\romannumeral5).
(\romannumeral1) of Theorem \ref{gusc} implies that
(\romannumeral5)$\Rightarrow$(\romannumeral1)
and
(\romannumeral2)$\Rightarrow$(\romannumeral1).

We shall complete the proof by showing that (\romannumeral3)$\Rightarrow$(\romannumeral2).
This follows from the fact that $P(u)$ is at most countable,
which
is pointed out by Theorem \ref{sdc} and Remark \ref{spc}.

\end{proof}

\begin{re} \label{pvr}
{\rm

By Lemma \ref{gnc}, if $u\in F^1_{USCG}(X)$ then $P(u)$ is at most countable.
So from the proof of Theorem \ref{gdm}, we know that
 Theorem \ref{gdm}
remains true if  ``$([u]_0, d)$ is separable'' is replaced by ``$u\in F^1_{USCG}(X)$''.

Here we mention that the condition ``$u\in F^1_{USCG}(X)$'' implies the condition ``$([u]_0, d)$ is separable''.
Let
$u\in F^1_{USCG}(X)$. Then for each $\al\in (0,1]$, $([u]_\al, d)$ is separable, because each compact metric space is separable.
Since $[u]_0 = \overline{\cup_{n=1}^{+\infty} [u]_{1/n} }$,
we have that
 $([u]_0, d)$ is separable.

  }
\end{re}

\begin{tl}
 \label{gdmc}

Let $(X, d)$ be a metric subspace of $\mathbb{R}^m$ or $l^2$,
and
 let $u$, $u_n$, $n=1,2,\ldots$ be fuzzy sets
  in $F^1_{USC} ( X )$. Then the following are equivalent:
\begin{enumerate}
  \renewcommand{\labelenumi}{(\roman{enumi})}

    \item $\lim_{n\to \infty}^{(\Gamma)}  u_n = u $;

      \item $\lim^{(K)}_{n\to \infty} [u_n]_\al = [u]_\al$ holds a.e. on $\al \in (0,1)$;

    \item    $\lim^{(K)}_{n\to \infty} [u_n]_\al = [u]_\al$ holds for all $\al\in (0,1) \setminus P(u)$;

    \item     There is a dense subset $P$ of $(0,1) \backslash P(u)$ such that $\lim^{(K)}_{n\to \infty} [u_n]_\alpha = [u]_\al$ holds for
$\al\in P$;

 \item There is a countable dense subset $P$ of $(0,1) \backslash P(u)$ such that $\lim^{(K)}_{n\to \infty} [u_n]_\alpha = [u]_\al$ holds for $\al\in P$.

\end{enumerate}

\end{tl}

\begin{proof}
   Since $\mathbb{R}^m$ and $l^2$ are separable spaces, and every subspace of a separable metric space is separable,
then
 $([u]_0, d)$ is separable.
Thus the desired results follow immediately from Theorem \ref{gdm}.

Here we mention that $\mathbb{R}^m$ can be seen as a metric subspace of $l^2$.

\end{proof}

\begin{re}\label{gdn}
{\rm
Example \ref{rcp} shows
that
there exist $u$ and $u_n$, $n=1,2,\ldots$ in
$ F^1_{USC} (\prod_{x\in (0,1]} [0,3])$
such that
\\
(\romannumeral1) $  H_{\rm send} (u_n, u) \to 0$,
\\
(\romannumeral2) $(0,1) \setminus P(u) = \emptyset$,
\\
(\romannumeral3) $\{[u_n]_\al \}$ does not Kuratowski converge
to
$[u]_\al$ when $\al\in (0,1]$,
\\
(\romannumeral4) $u = \lim_{n\to \infty}^{(\Gamma)}  u_n$.

The above (\romannumeral1)-(\romannumeral3) are shown in Example \ref{rcp}.
From
Proposition \ref{sge} and
Remark \ref{hkr}, the $H_{\rm send}$ convergence implies the $\Gamma$-convergence on $F^1_{USC}(\prod_{x\in (0,1]} [0,3])$.
Hence (\romannumeral1) implies (\romannumeral4).

From (\romannumeral2),
for each $\{v_n\}$ in $ F^1_{USC} (\prod_{x\in (0,1]} [0,3])$,
 the statement
   ``$\{[v_n]_\al \}$ Kuratowski converges
to
$[u]_\al$ for $\al \in (0,1) \setminus P(u)$ '' is true.
However
``$u = \lim_{n\to \infty}^{(\Gamma)} v_n$'' is not necessarily hold.

So from
Example \ref{rcp} we know:
the converse
 of the implication in statement (\romannumeral1)
of
Theorem \ref{gusc} does not hold;
 the converse of the implication in statement (\romannumeral2) of
Theorem \ref{gusc} does not hold;
 the condition ``$([u]_0, d)$ is separable''
can not be deleted in
Theorem \ref{gdm}.

}
\end{re}

\section{Level characterizations of endograph metric convergence}\label{lce}

In this section, we discuss the level characterizations of endograph metric convergence.
It is shown that under some condition,
the $H_{\rm end}$ metric convergence of fuzzy sets
can be decomposed to
the Hausdorff metric convergence of certain $\al$-cuts.

Let $u$ be a fuzzy set in $F^1_{USC} (X)$. Denote
$$P_0(u):   =\{\al\in (0,1) :   \lim_{\beta \to \al}  H([u]_\beta,  [u]_\al) \not= 0 \}.$$
Clearly
$P(u) \subseteq   P_0(u)$ for $u\in F^1_{USC} (X)$.
$P(u) \subsetneqq   P_0(u)$ could happen.
See Example \ref{rse}.

\begin{eap} \label{rse}
{\rm
Let $u \in F^1_{USC} (\mathbb{R}^2)$ defined by
\[
[u]_\al=\{0\} \cup  \{z: \arg z \in [\al,1]\} \ \mbox{for each} \ \al\in [0,1],
\]
here we write each $(x,y) \in \mathbb{R}^2$ as a complex number $z = x+iy$.
Then $P(u) = \emptyset$ and $P_0(u) = (0,1)$. So $P(u) \subsetneqq   P_0(u)$.

This example also shows that for $u\in F^1_{USC}(X)$,
$P_0(u)$ need not be at most countable even $X$ is a separable metric space.
  }
\end{eap}

\begin{lm} \cite{huang17} \label{c}
Let $U_n \in K(X)$ for $n=1,2,\ldots$.
\\
(\romannumeral1) \ If $U_1\supseteq U_2 \supseteq \ldots \supseteq U_n\supseteq \ldots$, then
$U=\bigcap_{n=1}^{+\infty}  U_n    \in K(X)$ and $H(U_n, U) \to 0$ as $n\to +\infty$.
\\
(\romannumeral2) \ If $V_1\subseteq V_2 \subseteq \ldots \subseteq V_n\subseteq \ldots$
and
$V = \overline{\bigcup_{n=1}^{+\infty}  V_n }   \in K(X)$, then $H(V_n, V) \to 0$ as $n\to +\infty$.
\end{lm}

\begin{proof}
This is Lemma 4.4 in \cite{huang17}.
Here we give a proof using Theorem \ref{bfc}.

Since for $n=1,2,\ldots$, $U_n$ is closed in $X$, then $U$ is a closed subset of $U_1\in K(X)$. Hence $U\in K(X)$.
Since $U_n \in K(U_1)$ for $n=1,2,\ldots$, then by Theorem \ref{bfc}, $\{U_n\}$ has a subsequence which
converges to
$D\in K(U_1)$.
Then clearly $H(U_n, D) \to 0$, and thus by Theorem \ref{hkg}, $D = \lim^{(K)}_{n\to\infty} U_n = U$.
So (\romannumeral1) is true.

Since $V_n \in K(V)$ for $n=1,2,\ldots$, then by Theorem \ref{bfc}, $\{V_n\}$ has a subsequence which
converges to
$C\in K(V)$.
Then clearly $H(V_n, C) \to 0$, and thus by Theorem \ref{hkg}, $C = \lim^{(K)}_{n\to\infty} V_n = V$.
So (\romannumeral2) is true.

In chinaXiv:202107.00011v3, which is an early version of this paper submitted in 2021-08-01,
we gave this proof of (\romannumeral2) and pointed out that (\romannumeral1) can be shown in a similar way.

\end{proof}

\begin{lm} \label{gnc}
 For $u\in F^1_{USCG}(X)$:
\\
(\romannumeral1) \  $\lim_{\beta\to \al-} H([u]_\beta, [u]_\al) =0$ holds for $\al\in (0,1]$;
\\
(\romannumeral2) \ $\lim_{\gamma\to \al+} H([u]_\gamma, \overline{\{u>\al\}}) =0$ holds for $\al\in (0,1)$;
\\
(\romannumeral3) \ $\lim_{\delta\to \al} H([u]_\delta, [u]_\al) =0$ holds for $\al\in (0,1)\setminus P(u)$ and $P(u) =   P_0(u)$;
\\
(\romannumeral4) \  $P_0 (u)$ is at most countable.
\end{lm}

\begin{proof}
  Lemma \ref{c} implies (\romannumeral1) and (\romannumeral2).
 (\romannumeral1) and (\romannumeral2) imply (\romannumeral3).
(\romannumeral4) is Lemma 6.12 in \cite{huang17}.

\end{proof}

\begin{re}
{\rm

(\romannumeral4) in Lemma \ref{gnc}
can also be shown in such a way:
let $u\in F^1_{USCG}(X)$, then by
Remark \ref{pvr}, $([u]_0, d)$ is separable, and thus by Theorem \ref{sdc} and Remark \ref{spc},
$P(u)$ is countable. So from (\romannumeral3) in Lemma \ref{gnc},
we obtain that
 $P_0 (u)$ is at most countable.
}
\end{re}

\begin{tm} \label{lre}
  Let $u$, $u_n$, $n=1,2,\ldots$, be fuzzy sets in $F^1_{USC} (X)$.
\\
(\romannumeral1) \ $\lim_{n\to \infty} H^*({\rm end}\, u, {\rm end}\, u_n) = 0$ if and only if
for each $\al\in [0,1)$ and $\xi\in (0, 1-\alpha]$,
$\lim_{n\to \infty} H^*([u]_{\al+\xi},    [u_n]_\al) = 0$.
\\
(\romannumeral2) \ $\lim_{n\to\infty} H^*{(\rm end}\, u_n, {\rm end}\, u) = 0$ if and only if
for each $\al\in (0,1]$ and $\zeta \in (0, \al]$,
$\lim_{n\to \infty} H^*([u_n]_\al, [u]_{\al-\zeta}) = 0$.

\end{tm}

\begin{proof}
We only prove (\romannumeral1). (\romannumeral2) can be proved similarly.

\textbf{Necessity}.
Assume that $\lim_{n\to \infty} H^*({\rm end}\, u, {\rm end}\, u_n) = 0$. Let $\al\in [0,1)$ and $\xi\in (0, 1-\alpha]$.
Then for each $\varepsilon \in (0, \xi)$, there
exists an $N(\varepsilon)$ such that for all $n\geq N$,
\begin{equation*}
H^*({\rm end}\, u, {\rm end}\, u_n)  < \varepsilon,
\end{equation*}
and then
$$H^*( [u]_{\al+\xi},  [u_n]_\al) < \varepsilon.$$
From the arbitrariness
of  $\varepsilon$ in $(0, \xi)$,
we have
$\lim_{n\to \infty} H^*([u]_{\al+\xi},    [u_n]_\al) = 0$.

{\textbf{Sufficiency}}.
Let $\varepsilon>0$. Select a $k \in \mathbb{N}$ with $2/k< \varepsilon$.
From (\romannumeral1), we have that for $l=2,\ldots, k$,
$\lim_{n\to\infty} H^*(  [u]_{l/k},  [u_n]_{(l-1)/k}    ) = 0$.
So
there is an $N(\varepsilon)$ such that for all $n\geq N$ and $l=2,\ldots, k$,
\begin{equation}\label{lse}
  H^*(  [u]_{l/k},  [u_n]_{(l-1)/k}    )  < \varepsilon.
\end{equation}

Let $(x,\al) \in {\rm end}\, u$. If $\al\leq \varepsilon$,
then
$d((x,\al), {\rm end}\, u_n) \leq \varepsilon$.
Suppose $\al > \varepsilon$.
Then we can
choose $l \in \{2, \ldots, k-1\}$ such that $l/k < \al  \leq (l+1)/k$.
Hence by \eqref{lse}, for $n\geq N$,
\begin{align*}
 & d((x,\al),  {\rm end}\, u_n ) \\
& \leq d(x, [u_n]_{(l-1)/k} ) + 2/k  \\
& < H^*([u]_{l/k},  [u_n]_{(l-1)/k}   ) + \varepsilon < 2\varepsilon.
\end{align*}
From the arbitrariness of $(x,\al) \in {\rm end}\, u$, it follows that
$H^* ( {\rm end}\, u, {\rm end}\, u_n ) < 2\varepsilon$ for all $n\geq N$.

Thus
$\lim_{n\to\infty} H^* ( {\rm end}\, u, {\rm end}\, u_n ) = 0$
from the arbitrariness of
 $\varepsilon>0$.

\end{proof}

\begin{tm} \label{lres}
  Let $u$, $u_n$, $n=1,2,\ldots$, be fuzzy sets in $F^1_{USC} (X)$.

(\romannumeral1) \ The following are equivalent:
\\
(\romannumeral1-1) \
$\lim_{n\to \infty} H^*({\rm end}\, u, {\rm end}\, u_n) = 0$;
\\
(\romannumeral1-2) \
For each $\al\in [0,1)$ and $\xi\in (0, 1-\alpha]$,
$\lim_{n\to \infty} H^*([u]_{\al+\xi},    [u_n]_\al) = 0$;
\\
(\romannumeral1-3) \
There is a dense subset $P$ of $[0,1)$
such that
for each $\al\in P$ and $\xi\in (0, 1-\alpha]$,
$\lim_{n\to \infty} H^*([u]_{\al+\xi},    [u_n]_\al) = 0$.

(\romannumeral2) \  The following are equivalent:
\\
(\romannumeral2-1) \
$\lim_{n\to\infty} H^*{(\rm end}\, u_n, {\rm end}\, u) = 0$;
\\
(\romannumeral2-2) \
for each $\al\in (0,1]$ and $\zeta \in (0, \al]$,
$\lim_{n\to \infty} H^*([u_n]_\al, [u]_{\al-\zeta}) = 0$;
\\
(\romannumeral2-3) \
There is a dense subset $P$ of $(0,1]$
such that for each $\al\in P$ and $\zeta \in (0, \al]$,
$\lim_{n\to \infty} H^*([u_n]_\al, [u]_{\al-\zeta}) = 0$.

\end{tm}

\begin{proof} \
Let
  $\al\in [0,1)$ and $\xi\in (0, 1-\alpha]$.
Choose $\beta \in P \cap [\al, \al+\xi)$.
Then
\begin{equation*}
  H^*([u]_{\al+\xi},    [u_n]_\al) \leq H^*([u]_{\al+\xi},    [u_n]_\beta).
\end{equation*}
Using this fact, we see that
(\romannumeral1-3)$\Rightarrow$(\romannumeral1-2).

(\romannumeral1-2)$\Rightarrow$(\romannumeral1-3) is obvious.
From Theorem \ref{lre}, we have
(\romannumeral1-1)$\Leftrightarrow$(\romannumeral1-2).
Thus (\romannumeral1) is proved.
Similarly, we can prove (\romannumeral2).

\end{proof}

\begin{re}{\rm
 Let $u$, $u_n$, $n=1,2,\ldots$, be fuzzy sets in $F^1_{USC} (X)$.
Clearly, (\romannumeral1-2) in Theorem \ref{lres} is equivalent to the following (\romannumeral1-2)$'$,
and
(\romannumeral2-2) in Theorem \ref{lres} is equivalent to the following (\romannumeral2-2)$'$:
\\
(\romannumeral1-2)$'$ \
For each $\al\in [0,1)$ there is a sequence $\{\xi_m\}$ in $(0, 1-\alpha]$ with $\xi_m \to 0+ $ satisfying
that
$\lim_{n\to \infty} H^*([u]_{\al+\xi_m},    [u_n]_\al) = 0$;
\\
(\romannumeral2-2)$'$ \
For each $\al\in (0,1]$ there is a sequence $\{\zeta_m\}$ in $(0, \al]$ with $\zeta_m \to 0+ $ satisfying
that
$\lim_{n\to \infty} H^*([u_n]_\al, [u]_{\al-\zeta_m}) = 0$.

Similarly, we can give
(\romannumeral1-3)$'$ and (\romannumeral2-3)$'$ which are equivalent to (\romannumeral1-3) and (\romannumeral2-3) in Theorem \ref{lres},
respectively. They are listed below.
\\
(\romannumeral1-3)$'$ \
There is a dense subset $P$ of $[0,1)$
such that
for each $\al\in P$ there exists a sequence $\{\xi_m\}$  in $(0, 1-\alpha]$ with $\xi_m \to 0+ $ satisfying
that
$\lim_{n\to \infty} H^*([u]_{\al+\xi_m},    [u_n]_\al) = 0$.
\\
(\romannumeral2-3)$'$ \
There is a dense subset $P$ of $(0,1]$
such that for each $\al\in P$  there exists a sequence $\{\zeta_m\}$ in $(0,\al]$ with $\zeta_m \to 0+ $ satisfying
that
$\lim_{n\to \infty} H^*([u_n]_\al, [u]_{\al-\zeta_m}) = 0$.

}
\end{re}

\begin{tl} \label{lrec}
  Let $u$, $u_n$, $n=1,2,\ldots$, be fuzzy sets in $F^1_{USC} (X)$.
Then
the following are equivalent:
\\
(\romannumeral1) \
$\lim_{n\to\infty} H_{\rm end} (u_n, u) = 0$;
\\
(\romannumeral2) \ For each $\al\in (0,1)$,
$\lim_{n\to\infty} H^*([u]_{\al+\xi},    [u_n]_\al) = 0$ when $\xi\in (0, 1-\alpha]$,
and
$\lim_{n\to\infty} H^*([u_n]_\al, [u]_{\al-\zeta}) = 0$ when $\zeta \in (0, \al]$;
\\
(\romannumeral3) \ There is a dense subset $P$ of $(0,1)$
such that
for each $\al\in P$,
$\lim_{n\to\infty} H^*([u]_{\al+\xi},    [u_n]_\al) = 0$ when $\xi\in (0, 1-\alpha]$,
and
$\lim_{n\to\infty} H^*([u_n]_\al, [u]_{\al-\zeta}) = 0$ when $\zeta \in (0, \al]$.
\end{tl}

\begin{proof} The desired result follows from Theorem \ref{lres}. The proof is routine.

Note that $\lim_{n\to\infty} H_{\rm end} (u_n, u) = 0$ if and only if
$\lim_{n\to \infty} H^*({\rm end}\, u, {\rm end}\, u_n) = 0$
and
$\lim_{n\to\infty} H^*{(\rm end}\, u_n, {\rm end}\, u) = 0$.
So
 Theorem \ref{lres} implies that (\romannumeral1)$\Rightarrow$(\romannumeral2)
and
 (\romannumeral3)$\Rightarrow$(\romannumeral1).
Clearly (\romannumeral2)$\Rightarrow$(\romannumeral3).
So
(\romannumeral1)$\Leftrightarrow$(\romannumeral2)$\Leftrightarrow$(\romannumeral3).

\end{proof}

\begin{lm} \label{emr}
 Let $u$, $u_n$, $n=1,2,\ldots$, be fuzzy sets in $F^1_{USC} (X)$, and
 let $P$ be a dense subset of
$(0,1)$.
\\
(\romannumeral1) \
  If for each $\al\in P$,
$\lim_{n\to\infty} H^*(\overline{\{u>\al\}},    [u_n]_\al) = 0$,
then
$\lim_{n\to \infty} H^*({\rm end}\, u, {\rm end}\, u_n) = 0$;
\\
(\romannumeral2) \
  If for each $\al\in P$,
$\lim_{n\to\infty} H^*([u_n]_\al, [u]_{\al}) = 0$,
then
$\lim_{n\to\infty} H^*{(\rm end}\, u_n, {\rm end}\, u) = 0$;
\\
(\romannumeral3) \
  If for each $\al\in P$,
$\lim_{n\to\infty} H^*(\overline{\{u>\al\}},    [u_n]_\al) = 0$
and
$\lim_{n\to\infty} H^*([u_n]_\al, [u]_{\al}) = 0$,
then
$\lim_{n\to\infty} H_{\rm end} (u_n, u) = 0$.
\end{lm}

\begin{proof}  The desired results follow from Theorem \ref{lres}. The proof is routine.

Clearly
for each $\al\in [0,1)$,
$\overline{\{u>\al\}} \supseteq [u]_{\al+\xi}$ when $\xi\in (0, 1-\alpha]$.
Thus the assumption
for each $\al\in P$,
$\lim_{n\to\infty} H^*(\overline{\{u>\al\}},    [u_n]_\al) = 0$
 implies
for each $\al\in P$ and $\xi\in (0, 1-\alpha]$,
$\lim_{n\to \infty} H^*([u]_{\al+\xi},    [u_n]_\al) = 0$. Then $u$, $u_n$, $n=1,2,\ldots$ satisfy
 the condition (\romannumeral1-3) in
Theorem \ref{lres}.
Hence by Theorem \ref{lres}, $\lim_{n\to \infty} H^*({\rm end}\, u, {\rm end}\, u_n) = 0$.
So (\romannumeral1) is true.

Clearly for each $\al\in (0,1]$,
$[u]_{\al}\subseteq
 [u]_{\al-\zeta}$ when $\zeta \in (0, \al]$.
Thus the assumption
for each $\al\in P$,
$\lim_{n\to\infty} H^*([u_n]_\al, [u]_{\al}) = 0$
implies that
for each $\al\in P$,
$\lim_{n\to\infty} H^*([u_n]_\al, [u]_{\al-\zeta}) = 0$ when $\zeta \in (0, \al]$.
 Then $u$, $u_n$, $n=1,2,\ldots$ satisfy the condition (\romannumeral2-3) in
Theorem \ref{lres}.
Hence by Theorem \ref{lres},
$\lim_{n\to\infty} H^*{(\rm end}\, u_n, {\rm end}\, u) = 0$.
So (\romannumeral2) is true.

(\romannumeral3) follows immediately from (\romannumeral1) and (\romannumeral2) since
$\lim_{n\to\infty} H_{\rm end} (u_n, u) = 0$ if and only if
$\lim_{n\to \infty} H^*({\rm end}\, u, {\rm end}\, u_n) = 0$
and
$\lim_{n\to\infty} H^*{(\rm end}\, u_n, {\rm end}\, u) = 0$.

We can see that
(\romannumeral3) follows immediately from Corollary \ref{lrec}.
This is another way to show (\romannumeral3).

\end{proof}

\begin{lm} \label{acm}
   Let $u$, $u_n$, $n=1,2,\ldots$, be fuzzy sets in $F^1_{USC} (X)$.
\\
(\romannumeral1) \ Let $\al\in [0,1)$. If $\lim_{n\to\infty} H^*({\rm end}\, u, {\rm end}\, u_n) = 0$, and
$\lim_{\gamma \to \alpha+} H( [u]_{\gamma}, \overline{\{u>\al\}}) = 0$,
then
 $\lim_{n\to\infty} H^*(\overline{\{u>\al\}},    [u_n]_\al) = 0$.
\\
(\romannumeral2) \ Let $\al\in (0,1]$. If $\lim_{n\to\infty} H^*{(\rm end}\, u_n, {\rm end}\, u) = 0$,
and
$\lim_{\beta \to \alpha-} H( [u]_\al, [u]_{\beta}) = 0$,
then
 $\lim_{n\to\infty} H^*( [u_n]_\al, [u]_\al) = 0$.
\end{lm}

\begin{proof} \ We only prove (\romannumeral1). (\romannumeral2) can be proved similarly.

  Let $\varepsilon>0$. Since $\lim_{\gamma \to \alpha+} H( [u]_{\gamma}, \overline{\{u>\al\}}) = 0$,
then there is a $\gamma( \alpha) \in (\al, 1]$
such that
$H(\overline{\{u>\al\}}, [u]_\gamma)<\varepsilon/2$.
By Theorem \ref{lre} (\romannumeral1),  $\lim_{n\to\infty} H^*({\rm end}\, u, {\rm end}\, u_n) = 0$ implies that
$\lim_{n\to \infty} H^*([u]_{\gamma},    [u_n]_\al) = 0$.
Then there is an $N\in \mathbb{N}$ such that
for all $n\geq N$,
$H^*([u]_{\gamma},    [u_n]_\al) < \varepsilon/2$.
Hence
for all $n\geq N$,
$$H^*(\overline{\{u>\al\}},    [u_n]_\al) \leq H(\overline{\{u>\al\}}, [u]_\gamma)  + H^*([u]_{\gamma},    [u_n]_\al) < \varepsilon.$$
From the arbitrariness of $\varepsilon>0$,
we thus have
$\lim_{n\to\infty} H^*(\overline{\{u>\al\}},    [u_n]_\al) = 0$.

\end{proof}

The assumption that
$\lim_{\gamma \to \alpha+} H( [u]_{\gamma}, \overline{\{u>\al\}}) = 0$
in
(\romannumeral1) of
Lemma \ref{acm} can not be omitted.
The assumption that
$\lim_{\beta \to \alpha-} H( [u]_\al, [u]_{\beta}) = 0$
in
(\romannumeral2) of
Lemma \ref{acm} also can not be omitted.
The following Examples \ref{snc} and \ref{fnc}
are counterexamples.

\begin{eap} \label{snc}
  {\rm
Define fuzzy sets $u$ and $u_n$, $n=1,2,\ldots$ in $F^1_{USC}(\mathbb{\mathbb{R}})$ given by
\[
[u]_\al
=\left\{
   \begin{array}{ll}
(-\infty, \frac{\frac{2}{3}}{\alpha-\frac{1}{3}}], & \al\in (\frac{1}{3}, 1], \\
    (-\infty, +\infty), & \al\in [0,\frac{1}{3}],
   \end{array}
 \right.
\]
and
\[
[u_n]_\al
=\left\{
   \begin{array}{ll}
(-\infty, \frac{1-\frac{1}{3}\frac{n-1}{n}}{\alpha-\frac{1}{3}\frac{n-1}{n}}], & \al\in (\frac{1}{3}\frac{n-1}{n}, 1], \\
    (-\infty, +\infty), & \al\in [0,   \frac{1}{3}\frac{n-1}{n}],
   \end{array}
 \right.
n=1,2,\ldots.
\]
Clearly
 $\lim_{\gamma \to \frac{1}{3}+} H( [u]_{\gamma}, \overline{\{u>\frac{1}{3}\}})
= \lim_{\gamma \to \frac{1}{3}+} H( [u]_{\gamma}, (-\infty, +\infty))
= +\infty \not\to 0$.

It can be seen that
$H_{\rm end} (u, u_n) \to 0$.
However
$$H^*(\overline{\{u > \frac{1}{3}\}},   [u_n]_\frac{1}{3} )
=H^*([u]_\frac{1}{3},   [u_n]_\frac{1}{3} )
=
H^*((-\infty, +\infty),\ (-\infty, \frac{3-\frac{n-1}{n}}{1-\frac{n-1}{n}}]) =  +\infty \not\to 0.$$

}
\end{eap}

\begin{eap} \label{fnc} {\rm
Define fuzzy sets $u$ and $u_n$, $n=1,2,\ldots$ in $F^1_{USC}(\mathbb{\mathbb{R}})$ given by
\[
[u]_\al
=
\left\{
  \begin{array}{ll}
    \{1\}, & \al=1, \\
   \{1\} \cup (-\infty, -\frac{1}{1-\al}], & \al\in [0,1),
  \end{array}
\right.
\]
and
\[
[u_n]_\al
 =
\left\{
  \begin{array}{ll}
    [u]_\al, & \al\in [0, 1-\frac{1}{n}], \\
   \mbox{} [u]_{1-\frac{1}{n}}, &\al\in [1-\frac{1}{n},1],
  \end{array}
\right.
n=1,2,\ldots.
\]
Clearly $\lim_{\beta \to 1-} H( [u]_1, [u]_{\beta}) = +\infty \not\to 0$.

We can see that
$H_{\rm end} (u, u_n) \to 0$.
However
$$H^*([u_n]_1, [u]_1) = H^*( \{1\} \cup (-\infty, -n],\  \{1\}) =  +\infty \not\to 0.$$

}
\end{eap}

\begin{tm} \label{uscg}
  Let $u$ be a fuzzy set in $F^1_{USCG} (X)$ and let $u_n$, $n=1,2,\ldots$, be fuzzy sets in $F^1_{USC} (X)$.

(\romannumeral1) \ The following are equivalent:
\\
(\romannumeral1-1) \
$\lim_{n\to\infty} H^*({\rm end}\, u, {\rm end}\, u_n) = 0$;
\\
(\romannumeral1-2) \
For each $\al\in (0,1)$,
$\lim_{n\to\infty} H^*(\overline{\{u>\al\}},    [u_n]_\al) = 0$;
\\
(\romannumeral1-3) \
There is a dense subset $P$ of $(0,1)$ such that
for each $\al\in P$,
$\lim_{n\to\infty} H^*(\overline{\{u>\al\}},    [u_n]_\al) = 0$.

(\romannumeral2) \ The following are equivalent:
\\
(\romannumeral2-1) \
$\lim_{n\to\infty} H^*{(\rm end}\, u_n, {\rm end}\, u) = 0$;
\\
(\romannumeral2-2) \
For each $\al\in (0,1]$,
$\lim_{n\to\infty} H^*([u_n]_\al, [u]_{\al}) = 0$;
\\
(\romannumeral2-3) \
There is a dense subset $P$ of $(0, 1]$
such that for each $\al\in P$,
$\lim_{n\to\infty} H^*([u_n]_\al, [u]_{\al}) = 0$.
\end{tm}

\begin{proof} \  The desired results follow from Lemma \ref{gnc},
Lemma \ref{acm} and
Theorem \ref{lres}. The proof is routine.

From Lemma \ref{gnc} (\romannumeral2),
 for $u\in F^1_{USCG}(X)$ and $\al\in (0,1)$,
$\lim_{\gamma\to \al+} H([u]_\gamma, \overline{\{u>\al\}}) =0$.
So by Lemma \ref{acm} (\romannumeral1),
(\romannumeral1-1) implies (\romannumeral1-2).
Clearly
(\romannumeral1-2) implies (\romannumeral1-3).
We shall complete the proof of (\romannumeral1) by showing that (\romannumeral1-3)$\Rightarrow$(\romannumeral1-1).
This follows from Lemma \ref{emr} (\romannumeral1).

Similarly, we can show (\romannumeral2).

From Lemma \ref{gnc} (\romannumeral1)
for $u\in F^1_{USCG}(X)$ and $\al\in (0,1]$,
 $\lim_{\beta\to \al-} H([u]_\beta, [u]_\al) =0$.
So by Lemma \ref{acm} (\romannumeral2),
(\romannumeral2-1) implies (\romannumeral2-2).
Clearly
(\romannumeral2-2) implies (\romannumeral2-3).
We shall complete the proof of (\romannumeral2) by showing that (\romannumeral2-3)$\Rightarrow$(\romannumeral2-1).
This follows from Lemma \ref{emr} (\romannumeral2).

\end{proof}

\begin{tm} \label{uscgre}
  Let $u$ be a fuzzy set in $F^1_{USCG} (X)$ and let $u_n$, $n=1,2,\ldots$, be fuzzy sets in $F^1_{USC} (X)$.
Then the following are equivalent:
\\
(\romannumeral1) \
 $\lim_{n\to\infty} H_{\rm end}(u_n, u) = 0$;
\\
(\romannumeral2) \
For each $\al\in (0,1)$,
$\lim_{n\to\infty} H^*(\overline{\{u>\al\}},    [u_n]_\al) = 0$
and
$\lim_{n\to\infty} H^*([u_n]_\al, [u]_{\al}) = 0$;
\\
(\romannumeral3) \
There is a dense subset $P$ of
$(0,1)$ such that for each $\al\in P$,
$\lim_{n\to\infty} H^*(\overline{\{u>\al\}},    [u_n]_\al) = 0$
and
$\lim_{n\to\infty} H^*([u_n]_\al, [u]_{\al}) = 0$;
\\
(\romannumeral4) \
For each $\al\in (0,1)\setminus P_0(u)$,
$\lim_{n\to\infty} H( [u]_{\al},    [u_n]_\al) = 0$;
\\
(\romannumeral5) \
There is a dense subset $P$ of
$(0,1)\setminus P_0(u)$ such that
for each $\al\in P$,
$\lim_{n\to\infty} H( [u]_{\al},    [u_n]_\al) = 0$;
\\
(\romannumeral6) \
There is a countable dense subset $P$ of
$(0,1)\setminus P_0(u)$ such that
for each $\al\in P$,
$\lim_{n\to\infty} H( [u]_{\al},    [u_n]_\al) = 0$;
\\
(\romannumeral7) \
 $H([u_n]_\al, [u]_\al) \rightarrow 0$ holds a.e. on $\al\in (0,1)$.

\end{tm}

\begin{proof}
  The desired results follow from Lemma \ref{gnc} and Theorem \ref{uscg}.
The proof is routine.

Note that $\lim_{n\to\infty} H_{\rm end} (u_n, u) = 0$ if and only if
$\lim_{n\to \infty} H^*({\rm end}\, u, {\rm end}\, u_n) = 0$
and
$\lim_{n\to\infty} H^*{(\rm end}\, u_n, {\rm end}\, u) = 0$.
Hence Theorem \ref{uscg} implies that
(\romannumeral1)$\Rightarrow$(\romannumeral2)
and
(\romannumeral3)$\Rightarrow$(\romannumeral1).
Clearly
(\romannumeral2)$\Rightarrow$(\romannumeral3).
So
(\romannumeral1)$\Leftrightarrow$(\romannumeral2)$\Leftrightarrow$(\romannumeral3).

Since for each $\al\in (0,1)\setminus P_0(u) $,
$\overline{\{u>\al\}}  = [u]_\al$,
then (\romannumeral2)$\Rightarrow$(\romannumeral4) is true.
Clearly
 (\romannumeral4)$\Rightarrow$(\romannumeral5)$\Rightarrow$(\romannumeral6).
Since $\overline{\{u>\al\}}\subseteq [u]_\al$,
(\romannumeral6)$\Rightarrow$(\romannumeral3) is true.

From Lemma \ref{gnc}, we have $P_0 (u)$ is at most countable,
 and therefore (\romannumeral4)$\Rightarrow$(\romannumeral7).
Since $\overline{\{u>\al\}}\subseteq [u]_\al$,
(\romannumeral7)$\Rightarrow$(\romannumeral3) is true.
So the proof is completed.

\end{proof}

\begin{tm} \label{aedr}
Let $u$, $u_n$, $n=1,2,\ldots$, be fuzzy sets in $F^1_{USC} (X)$.
If there is a dense subset $P$ of $(0,1)$ such that
  $H([u_n]_\al, [u]_\al) \to   0$ for each $\al \in P$,
  then
  $H_{\rm end}(u_n, u) \to 0$.
\end{tm}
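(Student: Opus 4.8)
The plan is to verify directly that both one-sided Hausdorff distances $H^*(\mathrm{end}\,u,\mathrm{end}\,u_n)$ and $H^*(\mathrm{end}\,u_n,\mathrm{end}\,u)$ become small, which by the definition of $H$ gives $H_{\mathrm{end}}(u_n,u)\to 0$. Fix $\varepsilon>0$. Since $P$ is dense in $[0,1]$, I first reduce the continuum of levels to a finite set: choose $\alpha_1<\alpha_2<\cdots<\alpha_m$ in $P$ so that $\alpha_1<\varepsilon$, $\alpha_{i+1}-\alpha_i<\varepsilon$, and $1-\alpha_m<\varepsilon$. Because there are only finitely many of these levels and $H([u_n]_{\alpha_i},[u]_{\alpha_i})\to 0$ for each $i$, I can pick a single $N$ such that $H([u_n]_{\alpha_i},[u]_{\alpha_i})<\varepsilon$ for all $i$ simultaneously whenever $n\geq N$.

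Then, for the estimate of $H^*(\mathrm{end}\,u,\mathrm{end}\,u_n)$, I take an arbitrary $(x,t)\in \mathrm{end}\,u$, i.e. $u(x)\geq t$, and produce a nearby point of $\mathrm{end}\,u_n$. If $t\geq\alpha_1$, let $\alpha_i$ be the largest chosen level with $\alpha_i\leq t$; the spacing conditions force $t-\alpha_i<\varepsilon$, and since $x\in[u]_t\subseteq[u]_{\alpha_i}$ the bound $H^*([u]_{\alpha_i},[u_n]_{\alpha_i})<\varepsilon$ supplies $y\in[u_n]_{\alpha_i}$ with $d(x,y)<\varepsilon$; then $(y,\alpha_i)\in\mathrm{end}\,u_n$ and $\overline{d}((x,t),(y,\alpha_i))=d(x,y)+(t-\alpha_i)<2\varepsilon$. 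Taking the supremum over $(x,t)$ gives $H^*(\mathrm{end}\,u,\mathrm{end}\,u_n)\leq 2\varepsilon$ for $n\geq N$. The reverse inequality $H^*(\mathrm{end}\,u_n,\mathrm{end}\,u)\leq 2\varepsilon$ follows by the identical argument with the roles of $u$ and $u_n$ interchanged, using the other half of the Hausdorff bound, $H^*([u_n]_{\alpha_i},[u]_{\alpha_i})<\varepsilon$. Combining the two directions yields $H_{\mathrm{end}}(u_n,u)\leq 2\varepsilon$ for $n\geq N$, and letting $\varepsilon\to 0$ finishes the proof.

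The step I expect to be the main obstacle is the \emph{bottom levels}, namely points $(x,t)\in\mathrm{end}\,u$ with $t<\alpha_1$: here the hypothesis provides no cut below $\alpha_1$, so there is no level $\alpha_i\leq t$ to match against. The clean way around this is the observation that $X\times\{0\}\subseteq\mathrm{end}\,v$ for every $v\in F_{USC}(X)$, because $v(x)\geq 0$ holds identically; consequently $(x,0)\in\mathrm{end}\,u_n$ and $\overline{d}((x,t),(x,0))=t<\alpha_1<\varepsilon$, so these low points are automatically within $\varepsilon$ of $\mathrm{end}\,u_n$, and the same device handles the symmetric direction. I would also record at the outset that $\mathrm{end}\,u$ is closed (from upper semicontinuity of $u$), so that $H_{\mathrm{end}}$ is legitimately the Hausdorff metric on $C(X\times[0,1])$; note that no compactness of the cuts is used anywhere, which is why the statement holds on all of $F_{USC}(X)$ rather than only on $F_{USCG}(X)$.
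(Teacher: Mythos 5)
Your argument is correct and complete. The finite net of levels $\alpha_1<\cdots<\alpha_m$ from $P$ with $\alpha_1<\varepsilon$, mesh $<\varepsilon$, and $1-\alpha_m<\varepsilon$ does exactly the needed reduction; the matching of a point $(x,t)$ of one endograph to a point $(y,\alpha_i)$ of the other at the largest chosen level $\alpha_i\leq t$ gives the bound $2\varepsilon$ in both one-sided distances, and you correctly identified and disposed of the only delicate case, the strip $t<\alpha_1$, via the observation that $X\times\{0\}$ lies in every endograph. You also rightly note that no compactness of the cuts enters, which is why the statement lives on all of $F_{USC}(X)$. The paper's own proof of this theorem is recorded only as a one-line indication that it proceeds by contradiction (presumably: extract a subsequence staying $\varepsilon_0$-far in $H_{\rm end}$, locate the offending points at some level, and contradict the assumed cut convergence at a nearby level of $P$). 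Your route is the direct, quantitative contrapositive of that: it avoids subsequence extraction altogether and yields an explicit uniform estimate $H_{\rm end}(u_n,u)\leq 2\varepsilon$ for $n\geq N$, which is arguably cleaner and gives slightly more information (an effective modulus in terms of the moduli at the finitely many levels used). One small point worth making explicit if you write this up: since $u$ is normal and upper semi-continuous, each $[u]_{\alpha_i}$ is a non-empty closed set, so the hypothesis $H([u_n]_{\alpha_i},[u]_{\alpha_i})<\varepsilon$ indeed yields, for each $x\in[u]_{\alpha_i}$, a point $y\in[u_n]_{\alpha_i}$ with $d(x,y)<\varepsilon$.
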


\begin{proof} We proceed by contradiction. If  $H_{\rm end}(u_n, u) \not\to 0$,
then there is an $\varepsilon>0$
such that
$H_{\rm end}(u_{n_k}, u) >   \varepsilon$
for a subsequence $\{ u_{n_k} \}$
of
$\{u_n\}$.

Suppose  $H^*({\rm end}\, u_{n_k},   {\rm end}\, u) > \varepsilon$. Then there
exists
a sequence $(x_{n_k}, \al_{n_k}) \in {\rm end}\, u_{n_k}$
such that
\begin{equation}\label{contpe}
d(    (x_{n_k}, \al_{n_k}) ,     {\rm end}\, u) > \varepsilon.
  \end{equation}
With no loss of generality we can assume   $\al_{n_k} \to \al \geq   \varepsilon$.
Pick $\beta\in P$ satisfying $\alpha \in (\beta, \beta + \varepsilon/2)$.
Then there exists $K$ such that
$   \al_{n_k}  \in  (\beta,   \beta + \varepsilon/2) $    for all $k \geq  K$.
Thus
for each $k \geq K$,
\begin{align}
d  &  (    (x_{n_k}, \al_{n_k}) ,     {\rm end}\, u)     \nonumber
\\
& \leq   d    (    (x_{n_k}, \beta) ,     {\rm end}\, u)   +   \varepsilon/2     \nonumber
\\
& \leq    H([u_{n_k}]_\beta,  [u]_\beta)   +   \varepsilon/2   \label{xne}
\end{align}
Note that $ H([u_{n_k}]_\beta,  [u]_\beta)  \to 0$, thus \eqref{contpe} contradicts \eqref{xne}.
So
the supposition is false.

For $H^*({\rm end}\, u,   {\rm end}\, u_{n_k}) > \varepsilon$,
we can similarly derive a contradiction.

\end{proof}

\begin{re}{\rm
It can be seen
that
Theorem \ref{aedr} can also be deduced from Lemma \ref{emr} (\romannumeral3).
 Fan (Lemma 1 in \cite{fan3}) proved a result of Theorem \ref{aedr} type.

}
\end{re}

\begin{tm} \label{hepc}
   Let $u$, $u_n$, $n=1,2,\ldots$, be fuzzy sets in $F^1_{USC} (X)$. If
  $H_{\rm end}(u_n, u) \to 0$,
  then
    $H([u_n]_\al, [u]_\al)  \to  0$ for each $\al \in (0,1) \setminus   P_0 (u)$.
\end{tm}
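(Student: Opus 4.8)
The plan is to fix $\al\in (0,1)\setminus P_0(u)$, set $\var_n:=H_{\rm end}(u_n,u)=H({\rm end}\,u_n,{\rm end}\,u)\to 0$, and estimate the two one-sided distances $H^*([u_n]_\al,[u]_\al)$ and $H^*([u]_\al,[u_n]_\al)$ separately. Throughout I will unpack $\var_n\to 0$ into the two facts that the $\overline d$-distance from any point of ${\rm end}\,u_n$ to ${\rm end}\,u$ is at most $\var_n$, and conversely; recall that both endographs are closed in $X\times[0,1]$ by upper semicontinuity, so these distances are meaningful. The other hypothesis I will exploit is the defining property of $\al\notin P_0(u)$: the cut map $\be\mapsto[u]_\be$ is $H$-continuous at $\al$, i.e.\ $\lim_{\be\to\al}H([u]_\be,[u]_\al)=0$. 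So, fixing $\de>0$, I choose $\eta>0$ with $\eta\le\de$ and $\al+\eta\le 1$ such that $H([u]_\be,[u]_\al)<\de$ whenever $|\be-\al|\le\eta$; this one statement supplies both the left- and the right-hand continuity I will need.

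First I would treat $H^*([u_n]_\al,[u]_\al)$, which is the easy direction. Given $x\in[u_n]_\al$, the point $(x,\al)$ lies in ${\rm end}\,u_n$, so its distance to ${\rm end}\,u$ is at most $\var_n$; once $\var_n<\eta$ there is $(y,s)\in{\rm end}\,u$ with $d(x,y)+|\al-s|<\eta$. If $s\ge\al$ then $u(y)\ge s\ge\al$ gives $y\in[u]_\al$ and $d(x,[u]_\al)<\eta$. If $s<\al$ then $y\in[u]_s$ with $|s-\al|<\eta$, so continuity at $\al$ gives $d(y,[u]_\al)\le H([u]_s,[u]_\al)<\de$, whence $d(x,[u]_\al)\le d(x,y)+d(y,[u]_\al)<\eta+\de\le 2\de$. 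Taking the supremum over $x\in[u_n]_\al$ yields $H^*([u_n]_\al,[u]_\al)\le 2\de$ for all large $n$.

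The hard part will be the reverse direction $H^*([u]_\al,[u_n]_\al)$. The difficulty is genuine: given $y\in[u]_\al$, the matched point $(x,t)\in{\rm end}\,u_n$ with $d(y,x)+|\al-t|\le\var_n$ may have level $t<\al$, and then $u_n(x)\ge t$ does \emph{not} force $x\in[u_n]_\al$, so $x$ need not be admissible. My fix is to perturb \emph{upward} in level before matching, using the right-continuity of the cut map that $\al\notin P_0(u)$ provides. Concretely, I set $\al':=\al+\eta/2\in(\al,1]$; since $|\al'-\al|\le\eta$ we have $H([u]_{\al'},[u]_\al)<\de$, so $d(y,[u]_{\al'})<\de$ and I may pick $y'\in[u]_{\al'}$ with $d(y,y')<\de$. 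Now $(y',\al')\in{\rm end}\,u$ has distance at most $\var_n$ to ${\rm end}\,u_n$, so once $\var_n<\eta/2$ there is $(x,t)\in{\rm end}\,u_n$ with $d(y',x)+|\al'-t|<\eta/2$. This forces $t>\al'-\eta/2=\al$, hence $u_n(x)\ge t>\al$ and $x\in[u_n]_\al$. Then $d(y,[u_n]_\al)\le d(y,y')+d(y',x)<\de+\eta/2\le 2\de$, and the supremum over $y\in[u]_\al$ gives $H^*([u]_\al,[u_n]_\al)\le 2\de$ for all large $n$.

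Combining the two estimates, $H([u_n]_\al,[u]_\al)\le 2\de$ for all sufficiently large $n$, so $\limsup_n H([u_n]_\al,[u]_\al)\le 2\de$; letting $\de\to 0$ gives $H([u_n]_\al,[u]_\al)\to 0$, as required. The single delicate point is the upward perturbation in the reverse direction, and it is precisely there that $\al\notin P_0(u)$ is indispensable: at a jump level $\al\in P_0(u)$ the cuts $[u]_{\al'}$ with $\al'>\al$ remain bounded away from $[u]_\al$, the step ``pick $y'\in[u]_{\al'}$ near $y$'' fails, and the argument collapses — matching the fact that the conclusion is asserted only off $P_0(u)$.
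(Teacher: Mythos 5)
Your argument is correct, and it is the natural direct one: split $H$ into its two one-sided parts, use $H^*({\rm end}\,u_n,{\rm end}\,u)\to 0$ together with the continuity of $\be\mapsto[u]_\be$ at $\al$ (which is exactly what $\al\notin P_0(u)$ gives) for the first part, and for the harder part match $(y',\al+\eta/2)$ rather than $(y,\al)$ so that the partner in ${\rm end}\,u_n$ is forced to sit at a level strictly above $\al$. The paper states this theorem without giving a proof, so there is nothing to compare against; your upward-perturbation trick is the standard way to handle the delicate direction.

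One small point should be patched. In the first part, in the case $s<\al$ you assert $y\in[u]_s$ from $(y,s)\in{\rm end}\,u$. This is fine for $s>0$, but if $s=0$ it fails, because by the paper's convention $[u]_0=\overline{\{u>0\}}$ is the support, whereas $(y,0)\in{\rm end}\,u$ holds for \emph{every} $y\in X$ (indeed ${\rm end}\,u\supseteq X\times\{0\}$), so $y$ need not lie in $[u]_0$ and $H([u]_0,[u]_\al)$ is not controlled by the continuity hypothesis. The fix is trivial: since $\al\in(0,1)$, also require $\eta<\al$ when you choose $\eta$; then $|s-\al|<\eta$ forces $s>0$ and the step goes through. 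A cosmetic remark on your closing sentence: what fails at $\al\in P_0(u)$ is continuity of the cut map from at least one side, not necessarily that the cuts $[u]_{\al'}$ with $\al'>\al$ stay bounded away from $[u]_\al$ (that is the platform-point condition defining $P(u)\subseteq P_0(u)$); this does not affect the proof itself.
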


\begin{proof}
 Let $\al \in (0,1) \setminus  P_0(u)$.
Given $\varepsilon>0$.        Then there exists a $\delta (\al, \varepsilon)  \in (0, \varepsilon/2) $
such that
$[\al-\delta, \al+\delta] \subset [0,1]$
and
\begin{equation}\label{usfc}
 H([u]_\beta,  [u]_\al) <   \varepsilon / 2
\end{equation} for all $\beta \in [\al-\delta, \al+\delta]$.

From    $H_{\rm end}(u_n, u) \to 0$,
there
exists an $N(\delta)$ such that
\begin{equation}\label{unds}
H_{\rm end} (u_n, u) < \delta
\end{equation} for all $n\geq N$.
Thus
$$H^*([u_n]_\al, [u]_{\al-\delta}) <  \delta  <   \varepsilon/2.$$
So,
 for each $n\geq N$,
\begin{align}\label{len}
 & H^*([u_n]_\al, [u]_{\al})  \nonumber  \\
& \leq  H^*([u_n]_\al, [u]_{\al-\delta})   +   H([u]_\al, [u]_{\al-\delta})  \nonumber    \\
& <  \varepsilon/2 + \varepsilon/2 = \varepsilon
  \end{align}

Similarly, it follows from \eqref{unds} that
$$H^*( [u]_{\al+\delta},  [u_n]_\al) < \delta < \varepsilon/2,$$
and
then, for each $n\geq N$,
\begin{align}\label{rehn}
 & H^*([u]_\al, [u_n]_{\al})  \nonumber  \\
& \leq   H([u]_\al, [u]_{\al+\delta})   +    H^*( [u]_{\al+\delta},    [u_n]_\al )     \nonumber    \\
& <  \varepsilon/2 + \varepsilon/2=\varepsilon.
  \end{align}

Combined with \eqref{len} and \eqref{rehn},
$$H([u]_\al,  [u_n]_\al) \to 0.$$
\end{proof}

\begin{re}{\rm
Note that
for each $\al\in (0,1) \setminus P_0(u)$,
$\lim_{\lambda \to \alpha}H( [u]_\al, [u]_{\lambda}) = 0$
and
$[u]_\al = \overline{\{u>\al \}}$.
Thus
Theorem \ref{hepc} can also be deduced from Lemma \ref{acm}.

}
\end{re}

The following theorem gives a condition under
which
 the $H_{\rm end}$ convergence of fuzzy sets
can be decomposed to
the Hausdorff metric convergence of certain $\al$-cuts.

\begin{tm} \label{aec}
Let $u$ be a fuzzy set in $F^1_{USCG} (X)$ and let $u_n$, $n=1,2,\ldots$, be fuzzy sets in $F^1_{USC} (X)$.
Then
the following are equivalent:
\begin{enumerate}
\renewcommand{\labelenumi}{(\roman{enumi})}

\item
  $H_{\rm end}(u_n, u) \to 0$;

  \item
 $H([u_n]_\al, [u]_\al) \rightarrow 0$ holds a.e. on $\al\in (0,1)$;

\item     $H ([u_n]_\al, [u]_\al) \to 0 $ for all $\al\in (0,1) \setminus P_0(u)$;

    \item   There is a dense subset $P$ of $(0,1) \backslash P_0(u)$ such that $H ([u_n]_\al, [u]_\al) \to 0 $ for
$\al\in P$;

\item     There is a countable dense subset $P$ of $(0,1) \backslash P_0(u)$
such that
 $H ([u_n]_\al, [u]_\al) \to 0 $ for
$\al\in P$.

  \end{enumerate}

\end{tm}

\begin{proof}

Theorem \ref{hepc} implies that (\romannumeral1)$\Rightarrow$(\romannumeral3).
Clearly
 (\romannumeral3)$\Rightarrow$(\romannumeral4)$\Rightarrow$(\romannumeral5).
 Theorem \ref{aedr} implies that
(\romannumeral5)$\Rightarrow$(\romannumeral1)
and
(\romannumeral2)$\Rightarrow$(\romannumeral1).

We shall complete the proof by showing that (\romannumeral3)$\Rightarrow$(\romannumeral2).
This follows from the fact that $P_0(u)$ is at most countable,
which
is pointed out by Lemma \ref{gnc}.

\end{proof}

\begin{re}\label{pec}
{\rm
By Lemma \ref{gnc}, $P_0(u)= P(u)$ for $u \in F^1_{USCG} (X)$.
So
$P_0(u)$ can be replaced by $P(u)$ in Theorem \ref{aec}.
  }
\end{re}

\begin{re}{\rm
  Clearly, Theorem \ref{uscgre} implies Theorem \ref{aec}.
}
\end{re}

 $F^1_{USCCON} (X)$ is a subset of $F^1_{USC} (X) $
defined by
 $$F^1_{USCCON} (X) := \{ u\in F^1_{USC} (X) : \mbox{ for each } \al\in(0,1], \ [u]_\al \mbox{ is connected in } X  \}.$$

 $F^1_{USCGCON} (X)$ is a subset of $F^1_{USCG} (X) $ defined by
 $$F^1_{USCGCON} (X) := F^1_{USCG} (X) \cap F^1_{USCCON} (X).$$

\begin{pp} \label{cne}
Let $C$ be a nonempty compact set in $\mathbb{R}^m$ and for $n=1,2,\ldots$ let $C_n$
be a nonempty connected and closed set in $\mathbb{R}^m$.
Then
$H(C_n, C) \to 0$ if and only if $\lim_{n\to\infty}^{(K)} C_n =C$.
 \end{pp}

\begin{proof}
  From Theorem \ref{hkg}, we have that $H(C_n, C) \to 0  \Rightarrow \lim_{n\to\infty}^{(K)} C_n =C$.

Now we show that $\lim_{n\to\infty}^{(K)} C_n =C \Rightarrow H(C_n, C) \to 0$.
We prove by contradiction.
Assume that $\lim_{n\to\infty}^{(K)} C_n =C$ but $H(C_n, C) \not\to 0$.
Then
$H^*(C, C_n) \not\to 0$ or $H^*( C_n, C) \not\to 0$.
We split the proof into two cases.

Case (\romannumeral1) $H^*(C, C_n) \not\to 0$.

In this case, there is an $\varepsilon>0$ and a subsequence $\{C_{n_k}\}$ of $\{C_n\}$
such that
$H^*(C, C_{n_k}) > \varepsilon$.
Thus for each $k=1,2,\ldots$ there exists
$x_k \in C$ such that $d(x_k, C_{n_k}) > \varepsilon$.
Since $C$ is compact,
there is a subsequence $\{x_{k_l}\}$ of $\{x_k\}$ which converges to $x\in C$.
Then
there is a $L(\varepsilon)$ such that
$d(x, x_{k_l}) < \varepsilon/2$ for all $l\geq L$. Hence
$d(x, C_{n_{k_l}}) \geq d(x_{k_l}, C_{n_{k_l}}) - d(x, x_{k_l}) > \varepsilon/2$. By Lemma \ref{infe} (\romannumeral1), this contradicts $x\in C = \liminf_{n\to\infty} C_n$.

Case (\romannumeral2) $H^*( C_n, C) \not\to 0$.

In this case,
 there is an $\varepsilon>0$ and a subsequence $\{C_{n_k}\}$ of $\{C_n\}$
such that
$H^*(C_{n_k}, C) > \varepsilon$.
Thus we have the following:
\\
(a) for each $k=1,2,\ldots$ there exists
$x_{n_k} \in C_{n_k}$ such that $d(x_{n_k}, C) > \varepsilon$.

Pick $y\in C$, since $C = \liminf_{n\to\infty} C_n$, we can find a sequence $\{y_n\}$
satisfying that
$y_n \in C_n$ for
$n=1,2,\ldots$
and
$\{y_n\}$ converges to $y$.
Hence
 there is a $N (\varepsilon)$ such that for all $n\geq N$,
$d(y_n, y) < \varepsilon$.
Since $d(y_n, C) \leq d(y_n, y) $,
we have the following:
\\
 (b) for all $n\geq N$,
$d(y_n, C) < \varepsilon$.

Let $k\in \mathbb{N}$ with $n_k \geq N$.
Define a function $f_k$ from $C_{n_k}$ to $\mathbb{R}$ given by $f_k (z)=d(z, C)$ for each $z \in C_{n_k}$.
 Then $f_k$  is a continuous function on $C_{n_k}$. Since
$C_{n_k}$ is a connected set in $\mathbb{R}^m$,
$f_k (C_{n_k})$
 is a connected set in $\mathbb{R}$.
Combined this fact with the above
clauses (a) and (b), we obtain
that there exists $z_{n_k} \in C_{n_k}$ such that
\begin{equation}\label{eun}
d(z_{n_k}, C) = \varepsilon.
\end{equation}
From \eqref{eun} and the compactness of
$C$, the set
 $\{   z_{n_k}, n_k \geq N   \}$ is bounded in $\mathbb{R}^m$, and thus $\{ z_{n_k}, n_k \geq N   \}$ has a cluster point $z$.
By \eqref{eun}, $d(z, C) = \varepsilon$, which contradicts
$z\in \limsup_{n\to\infty} C_n = C$.

\end{proof}

\begin{re}
{\rm

Proposition \ref{cne}
may be known, however we can't find this conclusion in the references that we can
obtain. So we give a proof here.

Let $A$ be a nonempty compact set in $\mathbb{R}^m$ and $B$ a nonempty closed set in $\mathbb{R}^m$.
If $H(A,B) < +\infty$, then $B$ is bounded and hence a compact set in $\mathbb{R}^m$.
In fact,
 $H(A,B) < +\infty$ if and only if $B$ is a compact set in $\mathbb{R}^m$.

From the above fact we know that
 for $C$ and $C_n$, $n=1,2,\ldots$, satisfying the
assumptions
of Proposition \ref{cne},
if $H(C_n, C) \to 0$ (by Proposition \ref{cne} $H(C_n, C) \to 0$
if and only if $\lim_{n\to\infty}^{(K)} C_n =C$),
  then clearly there is an $N \in \mathbb{N}$ such that for all $n\geq N$,
$H(C_n, C) < +\infty$ and thus for all $n\geq N$, $C_n$ is compact.
}
\end{re}

The following Proposition \ref{fce} is an immediate consequence of
Proposition \ref{cne}, Theorem \ref{aec}, and Theorem \ref{gdm} and Remark \ref{pvr}.

\begin{tm} \label{fce}
Let $u$ be a fuzzy set in $F^1_{USCG} (\mathbb{R}^m)$
and for $n=1,2,\ldots$,
let $u_n$ be a fuzzy set in $F^1_{USCCON} (\mathbb{R}^m)$.
Then
$H_{\rm end} (u_n, u) \to 0$ as $n\to\infty$ if and only if $\lim_{n\to\infty}^{(\Gamma)} u_n = u$.
\end{tm}

\begin{proof}

The proof is routine.

By let $X= \mathbb{R}^m$ in Theorem \ref{aec} we have the following:
\\
(\romannumeral1) $H_{\rm end} (u_n, u) \to 0$ if and only if
  $H ([u_n]_\al, [u]_\al) \to 0 $ holds a.e. on $\al\in (0,1)$.

By let $X= \mathbb{R}^m$ in Corollary \ref{gdmc}, we have
the following:
\\
 (\romannumeral2)
$\lim_{n\to \infty}^{(\Gamma)}  u_n = u $
if and only if
  $[u]_\al=\lim^{(K)}_{n\to \infty} [u_n]_\al $ holds a.e. on $\al\in (0,1)$.

Since
for each $\al\in (0,1]$ and $n\in \mathbb{N}$, $[u]_\al \in K(\mathbb{R}^m)$,  $[u_n]_\al \in C(\mathbb{R}^m)$ and $[u_n]_\al$ is connected in $\mathbb{R}^m$.
Thus by Proposition \ref{cne}, for each
 $\alpha \in (0,1]$,
$H ([u_n]_\al, [u]_\al) \to 0$
if and only if
$[u]_\al=\lim^{(K)}_{n\to \infty} [u_n]_\al $.
Combined this fact with the above clauses (\romannumeral1) and (\romannumeral2),
we have
that
$H_{\rm end} (u_n, u) \to 0$ if and only if $\lim_{n\to \infty}^{(\Gamma)}  u_n = u$.

\end{proof}

\begin{re}
{\rm
Theorem \ref{fce} would be false if $\mathbb{R}^m$
were replaced by a general metric space $X$.

}
\end{re}

Here we mention that
for $u \in F^1_{USCG}(\mathbb{R})$ and a sequence $\{u_n\}$
in $F^1_{USCCON}(\mathbb{R})$,
$H(u_n, u) \to 0$ does not imply that
there is an $N$
satisfying that for all $n\geq N$
$u_n \in F^1_{USCGCON}(\mathbb{R})$.

The following Example \ref{mke} is a such example, which shows
that
there exists a
$u \in F^1_{USCG}(\mathbb{R})$ and a sequence $\{u_n\}$
in $F^1_{USCCON}(\mathbb{R})$
such that
\\
(\romannumeral1)
$H(u_n, u) \to 0$,
\\
(\romannumeral2) for each $n=1,2,\ldots$, $u_n\notin F^1_{USCGCON}(\mathbb{R})$.

\begin{eap} \label{mke}
  {\rm
Let $u= \widehat{1}_{F(\mathbb{R})} \in F^1_{UCCG}(\mathbb{R})$.
For $n=1,2,\ldots$,
define $u_n \in F^1_{USC}(\mathbb{R})$ as follows
\[u_n (t) = \left\{
              \begin{array}{ll}
                1, & t=1,\\
              1/n, & t\not=1.
              \end{array}
            \right.
\]
Then for $n=1,2,\ldots$,
\[ [u_n]_\al = \left\{
              \begin{array}{ll}
                \{1\}, & \al \in (1/n,1],\\
              \mathbb{R}, & \al\in [0,1/n].
              \end{array}
            \right.
\]
So for each $n=1,2,\ldots$, $u_n \in  F^1_{USCCON}(\mathbb{R})$ but $u_n \notin  F^1_{USCGCON}(\mathbb{R})$.
It can be seen that $H_{\rm end}(u, u_n) = 1/n \to 0$.

}
\end{eap}

 Theorem 9.2 in \cite{huang} discusses the compatibility of
the endograph metric $H_{\rm end}$ and the $\Gamma$-convergence.
The following Corollary \ref{fer} is an immediate corollary of Theorem 9.2 in \cite{huang}.

\begin{tl} \label{fer}
  Let $u$ be a fuzzy set in $F^1_{USCG} (\mathbb{R}^m)$
and for $n=1,2,\ldots$,
let $u_n$ be a fuzzy set in $F^1_{USCGCON} (\mathbb{R}^m)$.
Then
$H_{\rm end} (u_n, u) \to 0$ as $n \to \infty$ if and only if $\lim_{n\to\infty}^{(\Gamma)} u_n = u$.
\end{tl}
We can see that Theorem \ref{fce} is an improvement of Corollary \ref{fer}.

Corollary \ref{fer} is
the normal fuzzy set case of
 Theorem 9.2 in \cite{huang}, which is an important special case of
 Theorem 9.2 in \cite{huang}.

\begin{re}\label{edn}
{\rm

Example \ref{rcp} shows
that
there exist $u$ and $u_n$, $n=1,2,\ldots$ in
$ F^1_{USC} (\prod_{x\in (0,1]} [0,3])$
such that
\\
(\romannumeral1) $  H_{\rm send} (u_n, u) \to 0$,
\\
(\romannumeral2) $(0,1) \setminus P_0 (u) = \emptyset$,
\\
(\romannumeral3) $H([u_n]_\al, [u]_\al) =1$ for all $\al\in (0,1]$ and $n=1,2,\ldots$,
\\
(\romannumeral4) $  H_{\rm end} (u_n, u) \to 0$.

The above (\romannumeral1)-(\romannumeral3) are shown in Example \ref{rcp}.
From
Proposition \ref{sge},
 (\romannumeral1) implies (\romannumeral4).

From (\romannumeral2),
for each $\{v_n\}$ in $ F^1_{USC} (\prod_{x\in (0,1]} [0,3])$,
  the statement
   ``$H([v_n]_\al, [u]_\al)  \to  0$ for each $\al \in (0,1) \setminus   P_0 (u)$'' is true.
However
 ``$H_{\rm end}(v_n, u) \to 0$'' is not necessarily hold.

So from
Example \ref{rcp} we know:
the converse
 of the implication
of
Theorem \ref{aedr} does not hold;
 the converse of the implication in
Theorem \ref{hepc} does not hold;
``$u$ be a fuzzy set in $F^1_{USCG} (X)$''
can not be replaced by
``$u$ be a fuzzy set in $F^1_{USC} (X)$'' in Theorem \ref{aec}.

}
\end{re}

\section{Relations among metrics on $F^1_{USC}(X)$} \label{rem}

In this section, we discuss the relation
among the $H_{\rm end}$ metric,
the $H_{\rm send}$ metric, the $d_\infty$ metric, and the $d_p^*$ metric on $F^1_{USC}(X)$.
Most of the results are proved based on
 the level characterizations of $H_{\rm end}$ given in Section \ref{lce}.

For $u,v\in F^1_{USC}(X)$, the $d_p$ distance given by
$$d_p(u,v)= \left(\int_0^1 H([u]_\al, [v]_\al)^p  \,   d\al   \right)^{1/p}$$
 is well-defined if and only if
$H([u]_\al, [v]_\al)$ is a measurable function of $\al$ on $[0, 1]$.
In the sequel, we suppose that the $d_p$ distance satisfying
 $p \geq 1$.

Since $H([u]_\al, [v]_\al)$ could be a non-measurable function
of $\al$ on
$[0,1]$ (see Example 2.13 in \cite{huang17c}),
we introduce the
 $d_p^*$ distance on $F^1_{USC} (X)$, $p\geq 1$, in
\cite{huang17},
which is defined by
\begin{gather*}
\begin{split}
d_p^*(u,v) := \inf \{  \  \left(\int_0^1 f(\al)^p  \,   d\al   \right)^{1/p}
 :
& \ f \mbox{ is a measurable function from } [0,1] \mbox{ to } \mathbb{R} \cup \{+\infty\}    ;
\\
&   \  f(\al)  \geq H([u]_\al, [v]_\al) \mbox{ for } \al\in [0,1]  \  \}
\end{split}
\end{gather*}
for
$u,v \in F^1_{USC} (X)$.

$d_p^*$ is an extended metric but probably not a metric on $F^1_{USC}(X)$.
In this paper, we call $d_p^*$ on $F^1_{USC}(X)$ the $d_p^*$ metric for simplicity.
 See also
Remark 3.3 in \cite{huang17}.

Clearly for $u,v\in F^1_{USC}(X)$,
\begin{equation}\label{spr}
  d_\infty(u,v) \geq d_p^*(u,v).
\end{equation}
The proof of \eqref{spr} is routine. Set $d_\infty(u,v) =\xi \in \mathbb{R} \cup \{+\infty\}$.
Define $f: [0,1] \to \mathbb{R} \cup \{+\infty\} $ by $f(\al)= \xi$ for each $\al\in [0,1]$.
Hence $f$ is a measurable function from $[0,1]$ to $\mathbb{R} \cup \{+\infty\} $ and
$f(\al)  \geq H([u]_\al, [v]_\al)$ for $ \al\in [0,1] $.
So
$d_p^*(u,v) \leq \left(\int_0^1 f(\al)^p  \,   d\al   \right)^{1/p} = \xi $.
Thus \eqref{spr} is true.

If $d_p(u,v)$ is well-defined for $u,v \in F^1_{USC} (X)$, then
$d_p^*(u,v) = d_p(u,v)$.
The $d_p^*$ metric is
 an expansion of the $d_p$ distance on
$F^1_{USC} (X)$.

The $d_p$ distance is well-defined on $F^1_{USC}(\mathbb{R}^m)$ (see \cite{huang17}, Proposition 2.2 in \cite{huang17c}).
The $d_p$ distance is well-defined on $F^1_{USCG}(X)$ (see \cite{huang17}, Proposition 2.7 in \cite{huang17c}).
We can see
that
the $d_p$ distance is a metric on $F^1_{USCB}(X)$.

Let $u\in F^1_{USCG}(\mathbb{R})$ defined by
\[
u(x)=
\left\{
  \begin{array}{ll}
0, & x<1, \\
1, & x=1, \\
   1/n, & x\in (n^2, (n+1)^2],\ n=1,2,\ldots.
  \end{array}
\right.
\]
Then $d_p(u, \widehat{1}_{F(\mathbb{R})}) = + \infty$.
The $d_p$ distance on $F^1_{USCG}(\mathbb{R}^m)$ is an extended metric but not a metric,
and the $d_p$ distance on $F^1_{USC}(\mathbb{R}^m)$ is an extended metric but not a metric.
In this paper, we call
the $d_p$ distance on $F^1_{USC}(\mathbb{R}^m)$ or $F^1_{USCG}(X)$ the $d_p$ metric for simplicity.

\begin{tm} \label{dpeu} Let $(X,d)$ be a metric space and let $u, v \in F^1_{USC} (X)$.
Then
\begin{equation}\label{dpe}
  d_p^* (u,v) \geq   \left(  \frac{H_{\rm end} (u,v)^{p+1} }{p+1}   \right)^{1/p}.
\end{equation}

\end{tm}

\begin{proof}
To show the desired result, we only need to show that for each $r>0$,
if $H_{\rm end} (u,v) > r$ then $d_p^* (u,v) \geq \left( \frac{r^{p+1}}{p+1}  \right)^{1/p}$.

Let $r>0$.
Assume that $H_{\rm end} (u,v) > r$. Then without loss of generality we suppose that $H^*({\rm end}\,u,    {\rm end}\,v) > r$, then there is an $(x,\beta) \in {\rm end}\, u$, such that
$d((x,\beta), {\rm end}\, v ) >r$.
This implies that $\beta > r$ and
$d(x, [v]_{\alpha}) > r-(\beta-\al)$ when $\alpha\in [\beta-r, \beta]$.
Hence
$H^*([u]_\alpha, [v]_{\alpha}) > r-(\beta-\al) $ when $\alpha\in [\beta-r, \beta]$.

Let $f$
be a measurable function on $[0,1]$
with
 $f(\al) \geq H([u]_\al, [v]_\al  )$ for $\al\in [0,1]$.
Then
\begin{align*}
& \left( \int_0^1  f(\al)^p \,d\al \right)^{1/p}
 \geq \left( \int_{\beta-r}^{\beta}  f(\alpha)^p     \,d\alpha \right)^{1/p} \\
&>     \left(    \int_{\beta-r}^{\beta}  (r-(\beta-\al))^p   \,d \alpha \right)^{1/p} \\
&= \left( \frac{r^{p+1}}{p+1}  \right)^{1/p}.
\end{align*}
So
$d_p^* (u,v) \geq \left( \frac{r^{p+1}}{p+1}  \right)^{1/p}$.

\end{proof}

  The ``='' can be obtained in \eqref{dpe}.
\begin{eap}
{\rm
Define $u$ and $v$ in $F^1_{USCB}(\mathbb{R})$ as
\[
u(x)
=
\left\{
  \begin{array}{ll}
1, & x=0,\\
0.5-x, & x\in (0, 0.5],\\
0, & \mbox{otherwise},
  \end{array}
\right.
\
v(x)
=
\left\{
  \begin{array}{ll}
1, & x=0,\\
0.5, & x\in (0, 0.5],\\
0, & \mbox{otherwise}.
  \end{array}
\right.
\]
Then $H_{\rm end} (u,v) = 0.5$
and
\[
H([u]_\al, [v]_\al)
=
\left\{
  \begin{array}{ll}
   0, & \al\in (0.5,1],\\
\al, & \al \in [0, 0.5].
  \end{array}
\right.
\]
Thus
$d_p(u,v) = (\int_0^{0.5}  \al^p  \,d\al)^{1/p} = \left(\frac{0.5^{p+1}}{p+1} \right)^{1/p} =  \left(  \frac{H_{\rm end} (u,v)^{p+1} }{p+1}   \right)^{1/p}$.

}
\end{eap}

We can see that the following Theorem \ref{gdpn} can also be deduced by the
Theorem \ref{dpeu}.

\begin{tm} \label{gdpn}
  Let $u\in F^1_{USC} (X)$ and for each positive integer $n$, let $u_n\in F^1_{USC} (X)$.
  If $d_p^*(u_n, u) \to 0$, then
  $H_{\rm end} (u_n, u) \to 0$.
\end{tm}

\begin{proof} We prove by contradiction.
    If
  $H_{\rm end} (u_n, u) \not\to 0$, then there is an $\varepsilon>0$ and a subsequence $\{   v_n   \}$
of $\{u_n\}$
such that
\begin{equation}\label{gec}
  H_{\rm end} (v_n,  u ) \geq \varepsilon.
\end{equation}

From the definition of $d_p^*$, there exist a sequence $\{f_n\}$ of
measurable function
from $[0,1]$ to $\mathbb{R} \cup \{+\infty\}$
such that for each $n\in \mathbb{N}$
\begin{gather}
 H([v_n]_\al,  [u]_\al)     \leq    f_n (\al)     \mbox{ for all }    \al\in [0,1],\label{smn}
 \\
 \left(\int_0^1   f_n (\al)^p   d\, \al     \right)^{1/p} \leq \frac{n+1}{n} d_p^*(v_n,  u). \label{tce}
\end{gather}
Since $d_p^*(v_n,  u) \to 0$, by \eqref{tce}, we have
$\left(\int_0^1   f_n (\al)^p   d\, \al     \right)^{1/p} \to 0$.
Thus there is a subsequence $\{f_{n_k}\}$ of $\{f_n\}$
such that
$\{f_{n_k} \}$ a.e. converges to $0$ on $[0,1]$.
Hence by \eqref{smn},
$H([v_{n_k}]_\al, [u]_\al) \rightarrow 0$ holds a.e. on $\al\in (0,1)$.
From
Theorem \ref{aedr}, this
  implies
   $H_{\rm end} (v_{n_k}, u) \to 0$, which
contradicts \eqref{gec}.

\end{proof}

 Theorem 4.1 in \cite{huang17c} says that
for $u\in F^1_{USCG} (X)$ and $v\in F^1_{USC} (X)$,
$H([u]_\al, [v]_\al)$ is a measurable function of $\al$ on $[0, 1]$.
So $d_p^*(u,v)
=
d_p(u,v)$ for $u\in F^1_{USCG} (X)$ and $v\in F^1_{USC} (X)$.

\begin{tm} \label{ecm} Suppose that $u\in  F^1_{USCG}(X)$ and $u_n \in F^1_{USC} (X)$, $n=1,2,\ldots$,
and
that there is a measurable function $F$ on $[0,1]$
such that $\int_0^1 F^p(\al) \,d\al < +\infty$
and
$H([u_n]_\al,  [u]_\al) \leq  F(\al)  $ for $n=1,2,\ldots$.
If
 $H_{\rm end} (u_n, u) \to 0$, then $d_p(u_n, u) \to 0$.
  \end{tm}

\begin{proof} \ By Theorem \ref{aec}, $H_{\rm end} (u_n, u) \to 0$
if and only if
 $H([u_n]_\al, [u]_\al) \rightarrow 0$ holds a.e. on $\al\in (0,1)$.
So the desired result follows from the Lebesgue's Dominated Convergence
Theorem.

\end{proof}

\begin{tl}\label{lcu}
Let $u\in  F^1_{USCG}(X)$ and for $n=1,2,\ldots$, let $u_n \in F^1_{USC} (X)$.
If $H_{\rm end}(u_n, u) \to 0$ and $\bigcup_{n=1}^{+\infty} [u_n]_0$ is bounded,
then
$d_p(u_n, u) \to 0$.
\end{tl}

\begin{proof}
Since $H_{\rm end}(u_n, u) \to 0$ and $\bigcup_{n=1}^{+\infty} [u_n]_0$ is bounded,
then from Remark \ref{sur}, $[u]_0
   \subseteq
\liminf_{n\to \infty}[u_n]_{0} \subseteq \overline{\bigcup_{n=1}^{+\infty} [u_n]_0}$ is bounded.
Hence there is an $M\in \mathbb{R}$ such that $d_\infty(u_n,u) \leq M$; that is, for all $\al\in [0,1]$,
 $H([u_n]_\al, [u]_\al)\leq M$. Thus
by Theorem \ref{ecm},
$d_p(u_n, u) \to 0$.

\end{proof}

\begin{pp} \label{spu} Let
  $u\in F^1_{USCB} (X)$ and for each positive integer $n$, let $u_n\in F^1_{USC} (X)$.
Then
$H_{\rm send} (u_n, u) \to 0$
if and only if
$H([u_n]_0, [u]_0) \to 0$ and $d_p(u_n, u) \to 0$.
\end{pp}

\begin{proof}From
Proposition \ref{sge}, $H_{\rm send} (u_n, u) \to 0$
if and only if
$H([u_n]_0, [u]_0) \to 0$ and $H_{\rm end} (u_n, u) \to 0$.
To prove the desired result, we only need to show
that
$$H([u_n]_0, [u]_0) \to 0 \mbox{ and } d_p(u_n, u) \to 0
\Leftrightarrow
H([u_n]_0, [u]_0) \to 0 \mbox{ and } H_{\rm end} (u_n, u) \to 0.$$

From Theorem \ref{gdpn}, ``$\Rightarrow$'' is true.
To show ``$\Leftarrow$'',
suppose that
 $H([u_n]_0, [u]_0) \to 0$ and $H_{\rm end} (u_n, u) \to 0$.
Then
there exist an $N \in \mathbb{N}$ such that $\bigcup_{n \geq N} [u_n]_0$ is bounded.
Thus
by Corollary \ref{lcu}, $d_p(u_n, u) \to 0$. So``$\Leftarrow$'' is true.

\end{proof}

 The above results and the examples in this paper show
that
the following statements are
true for $u$, $u_n$ in $F^1_{USC}(X)$, $n=1,2,\ldots$.

\begin{description}

  \item[(\romannumeral1)] $d_p^*(u_n, u) \to 0$ imply
  $H_{\rm end} (u_n, u) \to 0$ (a stronger conclusion is given in Theorem \ref{dpeu}).

  \item[(\romannumeral2)] Suppose that $u \in F^1_{USCB}(X)$.
By Proposition \ref{spu},
$H_{\rm send}(u_n, u) \to 0$ if and only if $ H([u_n]_0, [u]_0]) \to 0 $ and $d_p(u_n, u) \to 0$.

Suppose that $u$
and $\{u_n\}$ is in $F^1_{USCB}(X)$. Example \ref{pnu} below shows that $d_p(u_n, u) \to 0$ does not necessarily imply that
 $\bigcup_{n=1}^{+\infty} [u_n]_0$ is bounded.
Hence $d_p(u_n, u) \to 0$ does not necessarily imply
$H_{\rm send}(u_n, u) \to 0$ since $H_{\rm send}(u_n, u) \to 0$ implies that $\bigcup_{n=1}^{+\infty} [u_n]_0$ is bounded for $u$ and $\{u_n\}$ in $F^1_{USCB}(X)$
(In fact, by Theorem \ref{rcfe}, $H_{\rm send}(u_n, u) \to 0$ implies that $\bigcup_{n=1}^{+\infty} [u_n]_0$ is relatively compact).
Take $u$, $\{u_n\}$ in Remark \ref{sur}, we can see that $d_p(u_n, u) \to 0$ and $\bigcup_{n=1}^{+\infty} [u_n]_0$ is compact
need not imply $H_{\rm send}(u_n, u) \to 0$.

  \item[(\romannumeral3)] Suppose that $u \in F^1_{USCG}(X)$.
By Theorem \ref{ecm}, under certain conditions,
$H_{\rm end}(u_n, u) \to 0$ imply
$d_p(u_n, u) \to 0$.
However, even the sequence $\{u_n\}$ is in $F^1_{USCG}(X)$,
$H_{\rm send}(u_n, u) \to 0 $ does not necessarily imply $d_p(u_n, u) \to 0$.
See the following Example \ref{snp}.
By Corollary \ref{lcu}, $H_{\rm end}(u_n, u) \to 0$ and $\bigcup_{n=1}^{+\infty} [u_n]_0$ is bounded
imply
$d_p(u_n, u) \to 0$.

  \item[(\romannumeral4)] Suppose that $u\in F^1_{USC}(X)$.
Even
$H_{\rm send}(u_n, u) \to 0 $ and $\bigcup_{n=1}^{+\infty} [u_n]_0$ is bounded do not necessarily imply $d^*_p(u_n, u) \to 0$.
Below we illustrate that a such example is given in Example \ref{rcp}.

\end{description}

Example \ref{rcp} shows
that
there exist $u$ and $u_n$, $n=1,2,\ldots$ in
$ F^1_{USC} (\prod_{x\in (0,1]} [0,3])$
such that
\\
(\romannumeral1) $  H_{\rm send} (u_n, u) \to 0$,
\\
(\romannumeral2)  $[u]_0 = [u_n]_0 = \{ \xi\in \prod_{x\in (0,1]} [0,1] : \xi_x \to 0 \ \mbox{as} \ x\to 0  \}$
for all $n=1,2,\ldots$,
\\
(\romannumeral3) $H([u_n]_\al, [u]_\al) =1$ for all $\al\in (0,1]$ and $n=1,2,\ldots$,
\\
(\romannumeral4)
$\sup \{d(x,y): x,y \in  \bigcup_{n=1}^{+\infty} [u_n]_0 \}=1$, and so $\bigcup_{n=1}^{+\infty} [u_n]_0 $ is bounded,
\\
(\romannumeral5) $d_p(u_n, u)=1$ for all $n=1,2,\ldots$, and so $d_p(u_n, u)\not\to 0$.

The above (\romannumeral1)-(\romannumeral3) are shown in Example \ref{rcp}.
 (\romannumeral2) implies (\romannumeral4).
(\romannumeral3) implies (\romannumeral5).

\begin{eap} \label{pnu}
  {\rm
Let $u=\widehat{0}_{F(\mathbb{R})} \in F^1_{USCB} (\mathbb{R})$. For $n=1,2,\ldots$,
define $u_n \in  F^1_{USCB} (\mathbb{R})$ as
\[
u_n=
\left\{
  \begin{array}{ll}
   1, & x=0, \\
1/n^2, & 0< x \leq n^{1/p}, \\
   0, & \hbox{otherwise.}
  \end{array}
\right.
\]
Then
$d_p(u_n, u) = \left(       \int_0^{1/n^2} ( n^{1/p})^p \, d\al      \right)^{1/p} = (\frac{1}{n})^{1/p} \to 0$.
However,
 $\bigcup_{n=1}^{+\infty} [u_n]_0= [0,+\infty)$ is not bounded.

}
\end{eap}

\begin{eap} \label{snp}
  {\rm
 Define $u\in F^1_{USCG}(\mathbb{R})$ as follows:
$$[u]_\al= [0,1/\al].$$
For
each $n=1,2,\ldots$, define $u_n \in  F^1_{USCG}(\mathbb{R})$ as follows:
\[[u_n]_\al
=\left\{
   \begin{array}{ll}
     [0,1/\al], & \al\in (1/n,1], \\
    \mbox{} [0,1/\al+n^2], & \al\in [0, 1/n].
   \end{array}
 \right.
\]
Then $H_{\rm send} (u_n,u) \to 0$ and $d_p(u_n,u)=n^{2-1/p} \not\to 0 $.

}
\end{eap}

In \cite{huang17}, we obtain that the Skorokhod metric convergence imply the sendograph metric convergence
on $F^1_{USC} (X)$
 (see Theorem 8.1 in \cite{huang17}),
and that
Skorokhod metric convergence need not imply the $d_p$ convergence on a subset of $F^1_{USCG}(X)$ (see the end of Section 5 in \cite{huang17}). From the above conclusions,
we can also deduce that
the sendograph metric convergence need not imply the $d_p$ convergence on $F^1_{USCG}(X)$.

\section{Characterizations of compactness in $(F^1_{USCG} (X), H_{\rm end})$ and $(F^1_{USCB} (X), H_{\rm send})$}
\label{cmfuzzy}

Based on the conclusions in previous sections, we give characterizations of
total boundedness, relative compactness and compactness
in
 $(F^1_{USCG} (X), H_{\rm end})$ and $(F^1_{USCB} (X), H_{\rm send})$.

\begin{itemize}
 \item A subset $Y$ of a topological space $Z$ is said to be \emph{compact} if for every set $I$
and every family of open sets, $O_i$, $i\in I$, such that $Y\subset \bigcup_{i\in I} O_i$ there exists
a finite family $O_{i_1}$, $O_{i_2}$ \ldots, $O_{i_n}$ such that
$Y\subseteq O_{i_1}\cup O_{i_2}\cup\ldots \cup O_{i_n}$.
In
the case of a metric topology, the criterion for compactness becomes that any sequence in $Y$ has a subsequence convergent in $Y$.

 \item
A \emph{relatively compact} subset $Y$ of a topological space $Z$ is a subset with compact closure. In the case of a metric topology, the criterion for relative compactness becomes that any sequence in $Y$ has a subsequence convergent in $X$.

 \item Let $(X, d)$ be a metric space. A set $U$ in $X$ is \emph{totally bounded} if and only if, for each $\varepsilon>0$, it contains a finite $\varepsilon$ approximation, where an $\varepsilon$ approximation to $U$ is a subset $S$ of $U$ such that $d(x,S)<\varepsilon$ for each $x\in U$.
An $\varepsilon$ approximation to $U$ is also called an $\varepsilon$-net of $U$.

\end{itemize}
Let $(X, d)$ be a metric space. A set $U$ is compact in $(X,d)$ implies that $U$ is relatively compact
in $(X,d)$, which in turn
implies that $U$ is totally bounded in $(X,d)$.

We use $(\widetilde{X}, \widetilde{d})$ to denote the completion of $(X, d)$.
We see $(X, d)$ as a subspace of $(\widetilde{X}, \widetilde{d})$.
Let
$S \subseteq \widetilde{X}$.
The symbol $\widetilde{\overline{S}}$ is used to denote
the closure of $S$
in
 $(\widetilde{X}, \widetilde{d})$.

As defined in Section \ref{bas}, we have
$K(\widetilde{X})$,
 $C(\widetilde{X})$, $F^1_{USC}(\widetilde{X})$,
$F^1_{USCG} (\widetilde{X})$, etc. according to $(\widetilde{X}, \widetilde{d})$.
For example,
  \begin{gather*}
   F^1_{USC}(\widetilde{X}) :=\{ u\in F(\widetilde{X}) : [u]_\al \in  C(\widetilde{X})  \  \mbox{for all} \   \al \in [0,1]   \}, \\
   F^1_{USCG} (\widetilde{X}) := \{ u\in  F(\widetilde{X}): [u]_\al \in K(\widetilde{X}) \ \mbox{for all} \   \al\in (0,1] \}.
 \end{gather*}

If there is no confusion,
 we also
use $H$ to denote the Hausdorff metric
on
 $C(\widetilde{X})$ induced by $\widetilde{d}$.
  We also
use $H$ to denote the Hausdorff metric
 on $C(\widetilde{X}\times [0,1])$ induced by $\overline{\widetilde{d}}$.
 We
 also use $H_{\rm end}$ to denote the endograph metric on $F^1_{USC}(\widetilde{X})$
given by using $H$ on
$C(\widetilde{X} \times [0,1])$.

Clearly,
the induced metric on $F^1_{USCG} (X)$ by the $H_{\rm end}$ on $F^1_{USC} (X)$
is the same as
the induced metric on $F^1_{USCG} (X)$ by
the
$H_{\rm end}$ on $F^1_{USC} (\widetilde{X})$.

We see
$ (F^1_{USCG} (X), H_{\rm end})$
as a metric subspace of
$ (F^1_{USCG} (\widetilde{X}), H_{\rm end})$.

\subsection{Characterizations of compactness in $(K(X), H)$}

In this subsection, we give characterizations of total boundedness, relative compactness and compactness in $(K(X), H)$.
The
results in this subsection are basis for contents in the sequel.

\begin{tm} \label{cpm} Let $(X, d)$ be complete and let $\{C_n\}$ be a Cauchy sequence in $(K(X), H)$.
Put $D_n= \bigcup_{l=1}^{n} C_l$
and
 $D= \overline{\bigcup_{l=1}^{+\infty} C_l}  $.
 Then
 $D \in K(X)$
and
  $H(D_n, D) \to 0$.
\end{tm}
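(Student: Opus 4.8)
The plan is to establish the two assertions separately: first that $D$ is a non-empty compact set, and then that $H(D_n, D) \to 0$. For the first assertion, note that $D$ is closed by construction, so since $X$ is complete it suffices to prove that $D$ is totally bounded; a closed and totally bounded subset of a complete space is complete and totally bounded, hence compact, and $D$ is clearly non-empty. For total boundedness I would exploit the Cauchy hypothesis: given $\varepsilon > 0$, pick $N$ with $H(C_m, C_n) < \varepsilon$ for all $m, n \geq N$.

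The finite union $\bigcup_{l=1}^{N} C_l$ is compact, hence totally bounded, so it admits a finite $\varepsilon$-net $F$. For any $m > N$ we have $H(C_m, C_N) < \varepsilon$, so every point of $C_m$ lies within $\varepsilon$ of $C_N \subseteq \bigcup_{l=1}^{N} C_l$; combined with $F$, this shows every point of $\bigcup_{l=1}^{\infty} C_l$ lies within $2\varepsilon$ of $F$. Thus $F$ is a finite $2\varepsilon$-net for $\bigcup_{l=1}^{\infty} C_l$, and the same net serves its closure $D$. Since $\varepsilon$ was arbitrary, $D$ is totally bounded, so $D \in K(X)$.

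For the convergence, observe that $D_n \subseteq D$ gives $H^{*}(D_n, D) = 0$, so $H(D_n, D) = H^{*}(D, D_n) = \sup_{x \in D} d(x, D_n)$, and it remains to bound this supremum. Fix $\varepsilon > 0$ and choose $N$ as above. For $n \geq N$ and $x \in D$, approximate $x$ within $\varepsilon$ by some $y \in \bigcup_{l} C_l$, say $y \in C_m$. If $m \leq n$ then $y \in D_n$ and $d(x, D_n) < \varepsilon$; if $m > n \geq N$ then $H(C_m, C_N) < \varepsilon$ yields $z \in C_N \subseteq D_n$ with $d(y, z) < \varepsilon$, whence $d(x, D_n) \leq d(x,y) + d(y,z) < 2\varepsilon$. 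In either case $d(x, D_n) < 2\varepsilon$, so $H^{*}(D, D_n) \leq 2\varepsilon$ for all $n \geq N$, giving $H(D_n, D) \to 0$.

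The two arguments share the same mechanism, and neither is deep; the one point demanding care is the case split on the index $m$ of the approximating point $y$, where the Cauchy property must be used to pull a point of the distant set $C_m$ back into the already-included set $C_N$. I expect this case analysis, rather than the total-boundedness estimate, to be the main place where the constants ($\varepsilon$ versus $2\varepsilon$) and the inclusions $C_N \subseteq D_n$ must be tracked carefully.
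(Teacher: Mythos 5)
Your proof is correct: the total-boundedness argument via a finite $\varepsilon$-net for $\bigcup_{l=1}^{N}C_l$ pushed forward by the Cauchy condition, followed by the one-sided estimate $H(D_n,D)=H^{*}(D,D_n)$ with the case split on the index $m$, is exactly the standard argument for this statement (the paper states the theorem without printing a proof, and this is the expected route). The only cosmetic point is that the $2\varepsilon$-net for $\bigcup_l C_l$ is strictly a $3\varepsilon$-net for the closure $D$ unless one uses closed balls, which changes nothing.
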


\begin{proof}
  Note that for $k>j$,
  $$H(D_k, D_j) \leq \max\{H(C_i, C_j) : i=j+1,\ldots,k \}.$$
So
  $\{D_n\}$ is a Cauchy sequence in $(K(X), H)$.
   From Theorem \ref{bfc}, $(K(X), H)$ is complete,
  and thus by Theorem \ref{hkg}, $\{D_n\}$ converges to $\limsup_{n\to \infty} D_n = D \in K(X)$.

  \end{proof}

\begin{tm} \label{tbe}
  Let $(X,d)$ be a metric space and $\mathcal{D}\subseteq K(X)$.
   Then $\mathcal{D}$ is totally bounded in $(K(X), H)$
if and only if
$\mathbf{ D} =   \bigcup \{C:  C \in  \mathcal{D} \} $ is totally bounded in $(X,d)$.
\end{tm}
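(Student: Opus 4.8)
The plan is to prove the two implications separately, in each case by converting a finite net for one structure into a finite net for the other. Recall that a subset of a metric space is totally bounded precisely when for every $\varepsilon>0$ it admits a finite $\varepsilon$-net.

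For the implication that total boundedness of $\mathbf{D}$ in $X$ forces total boundedness of $\mathcal{D}$ in $(K(X),H)$, I would fix $\varepsilon>0$ and choose a finite $\varepsilon$-net $F=\{x_1,\dots,x_k\}\subseteq X$ for $\mathbf{D}$. The candidate net in $K(X)$ is the finite family of all non-empty subsets of $F$; each such subset is a finite, hence compact, member of $K(X)$. Given $C\in\mathcal{D}$, I would assign to it $S_C:=\{x_i\in F:\ d(c,x_i)<\varepsilon \text{ for some } c\in C\}$, which is non-empty because $C\subseteq\mathbf{D}$ is covered by the net. The key estimate is $H(C,S_C)\le\varepsilon$: every $c\in C$ lies within $\varepsilon$ of some $x_i\in S_C$, giving $H^{*}(C,S_C)\le\varepsilon$, and every $x_i\in S_C$ lies within $\varepsilon$ of some $c\in C$ by construction, giving $H^{*}(S_C,C)\le\varepsilon$. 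Since $S_C$ ranges over a finite family independent of $C$, this exhibits a finite $\varepsilon$-net for $\mathcal{D}$.

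For the converse, I would fix $\varepsilon>0$ and use total boundedness of $\mathcal{D}$ to obtain a finite family $\{C_1,\dots,C_m\}\subseteq K(X)$ such that every $C\in\mathcal{D}$ satisfies $H(C,C_j)<\varepsilon$ for some $j$. Each $C_j$ is compact, hence totally bounded in $X$, so it has a finite $\varepsilon$-net $F_j$; set $F:=\bigcup_{j=1}^{m} F_j$, which is still finite. To see that $F$ is a $2\varepsilon$-net for $\mathbf{D}$, take $x\in\mathbf{D}$, say $x\in C$ with $H(C,C_j)<\varepsilon$; then $H^{*}(C,C_j)<\varepsilon$ yields some $y\in C_j$ with $d(x,y)<\varepsilon$, and the net $F_j$ yields some $z\in F_j\subseteq F$ with $d(y,z)<\varepsilon$, whence $d(x,z)<2\varepsilon$. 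Since $\varepsilon>0$ was arbitrary, this gives total boundedness of $\mathbf{D}$ in $X$.

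Neither direction is deep; the only step requiring genuine care is the rounding construction in the first implication, where one replaces a compact set $C$ by the subset $S_C$ of net points it is close to and must check that both one-sided Hausdorff distances $H^{*}(C,S_C)$ and $H^{*}(S_C,C)$ stay under control simultaneously. The passage from an $\varepsilon$-net to a $2\varepsilon$-net in the converse is harmless because $\varepsilon$ is arbitrary.
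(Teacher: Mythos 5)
Your proof is correct and follows the standard argument: rounding each $C\in\mathcal{D}$ to the non-empty subset $S_C$ of a finite $\varepsilon$-net for $\mathbf{D}$ in one direction, and combining finite nets of the finitely many approximating compacta $C_1,\dots,C_m$ in the other. Both one-sided estimates $H^{*}(C,S_C)\le\varepsilon$ and $H^{*}(S_C,C)\le\varepsilon$ are verified as needed, and the $2\varepsilon$ loss in the converse is harmless; this is essentially the intended proof.
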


\begin{proof} If $\mathcal{D} = \emptyset$, then the desired result follows immediately.
Suppose that
$\mathcal{D} \not= \emptyset$.

\textbf{\emph{Necessity}}. \
 To show that $\mathbf{ D}$ is totally bounded. We only need to show that
each sequence in $\mathbf{ D}$ has a Cauchy subsequence.

 Given a sequence $\{x_n\}$
in   $\mathbf{D}$.
 Suppose that
$x_n \in C_n \in \mathcal{D}$.
Since $\mathcal{D}$ is totally bounded, then $\{C_n\}$ has a Cauchy subsequence $\{C_{n_k}\}$.
Hence,
 by Theorem \ref{cpm},
$\widetilde{\overline{\bigcup_{k=1}^{+\infty} C_{n_k} }}$ is in $K(\widetilde{X})$.
Thus
$\{x_n\} $
 has a Cauchy subsequence.

\textbf{\emph{Sufficiency}}. \   If   $\mathbf{ D} $ is totally bounded in $X$, then $\widetilde{\overline{\mathbf{D}}}$ is in $K(\widetilde{X})$.
So, by Theorem \ref{bfc}, $(K(\widetilde{\overline{\mathbf{D}}}), H)$ is compact, and
thus
 $\mathcal{D} $ is totally bounded.

\end{proof}

\begin{tm}\label{rce}
  Let $(X,d)$ be a metric space and $\mathcal{D}\subseteq K(X)$. Then $\mathcal{D} $ is relatively compact in $(K(X), H)$
if and only if
$\mathbf{ D} =   \bigcup \{C:  C \in  \mathcal{D} \} $ is relatively compact in $(X,d)$.
\end{tm}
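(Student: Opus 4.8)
The plan is to prove the two implications separately, using throughout the fact that in a metric space relative compactness of a set is equivalent to compactness of its closure, and hence to sequential compactness of that closure. I would keep in mind that total boundedness is intrinsic (and already handled by Theorem \ref{tbe}), whereas relative compactness also involves limit points, so the two directions will call on different tools.

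For the implication ``$\mathbf{D}$ relatively compact in $X$ $\Rightarrow$ $\mathcal{D}$ relatively compact in $(K(X),H)$'', I would argue as follows. By hypothesis $\overline{\mathbf{D}}$ is a compact subset of $X$, so $(\overline{\mathbf{D}}, d)$ is itself a compact metric space, and Theorem \ref{bfc} yields that $(K(\overline{\mathbf{D}}), H)$ is compact. Since every $C \in \mathcal{D}$ is a nonempty compact subset of $\mathbf{D} \subseteq \overline{\mathbf{D}}$, we have $\mathcal{D} \subseteq K(\overline{\mathbf{D}})$, and the inclusion $K(\overline{\mathbf{D}}) \hookrightarrow K(X)$ is isometric. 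A compact set is closed, so $K(\overline{\mathbf{D}})$ is closed in $(K(X),H)$; hence the closure of $\mathcal{D}$ taken in $(K(X),H)$ stays inside $K(\overline{\mathbf{D}})$. Being a closed subset of a compact set, this closure is compact, which is exactly relative compactness of $\mathcal{D}$.

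For the converse I would use sequential compactness directly. Assume $\mathcal{D}$ is relatively compact in $(K(X),H)$. It suffices to show every sequence in $\overline{\mathbf{D}}$ has a subsequence converging in $\overline{\mathbf{D}}$, and by approximating points of $\overline{\mathbf{D}}$ by points of $\mathbf{D}$ it is enough to treat sequences $\{x_k\}$ with $x_k \in \mathbf{D}$. For each $k$ choose $C_k \in \mathcal{D}$ with $x_k \in C_k$. Relative compactness of $\mathcal{D}$ gives a subsequence $C_{k_j} \to C$ in the Hausdorff metric for some $C \in K(X)$. Then $d(x_{k_j}, C) \le H^{*}(C_{k_j}, C) \le H(C_{k_j}, C) \to 0$, so there are $y_j \in C$ with $d(x_{k_j}, y_j) \to 0$; since $C$ is compact I can pass to a further subsequence with $y_{j_l} \to y \in C$, whence $x_{k_{j_l}} \to y$. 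As $\overline{\mathbf{D}}$ is closed, $y \in \overline{\mathbf{D}}$. This produces the required convergent subsequence, so $\overline{\mathbf{D}}$ is sequentially compact, hence compact, i.e. $\mathbf{D}$ is relatively compact. (Alternatively, relative compactness forces total boundedness, Theorem \ref{tbe} then gives $\mathbf{D}$ totally bounded, and the same limit-point argument supplies completeness of $\overline{\mathbf{D}}$.)

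The main obstacle I anticipate is in the converse direction: converting Hausdorff convergence of the sets $C_{k_j}$ into convergence of the chosen points $x_{k_j}$. The inequality $d(x_{k_j}, C) \le H(C_{k_j}, C)$ only drives the points toward the limit set $C$, not to a single point, so the compactness of $C$ is essential to extract an actual limit. Minor care is also needed in the reduction from arbitrary sequences in $\overline{\mathbf{D}}$ to sequences in $\mathbf{D}$, and in verifying that the Hausdorff metric on $K(\overline{\mathbf{D}})$ agrees with that on $K(X)$ so that the two closure operations are compatible.
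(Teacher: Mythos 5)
Your proof is correct and follows what is essentially the natural (and the paper's) route: for sufficiency, pass to the compact space $\overline{\mathbf{D}}$ and invoke Theorem \ref{bfc}(\romannumeral3) together with the fact that $K(\overline{\mathbf{D}})$ sits isometrically and closed in $(K(X),H)$; for necessity, select $C_k\in\mathcal{D}$ containing the given points, extract an $H$-convergent subsequence, and use $d(x_{k_j},C)\le H(C_{k_j},C)\to 0$ plus compactness of the limit set $C$ to produce a convergent subsequence of points. The reduction from sequences in $\overline{\mathbf{D}}$ to sequences in $\mathbf{D}$ is handled correctly, so no gaps remain.
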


\begin{proof} \ If $\mathcal{D} = \emptyset$, then the desired result follows immediately.
Suppose that
$\mathcal{D} \not= \emptyset$.

\textbf{\emph{Necessity}}. \   To show that $\mathbf{ D}$ is relatively compact. We only need to show that
each sequence in $\mathbf{ D}$ has a convergent subsequence in $X$.

Given a sequence $\{x_n\}$
in   $\mathbf{D}$.
 Suppose that
$x_n \in C_n \in \mathcal{D}$.
Since $\mathcal{D}$ is relatively compact, then $\{C_n\}$ has a subsequence $\{C_{n_k}\}$ which converges to $C$ in $K(X)$.
Hence,
 by Theorem \ref{cpm},
$\widetilde{\overline{\bigcup_{k=1}^{+\infty} C_{n_k} }}$ is in $K(\widetilde{X})$
(In fact, $\widetilde{\overline{\bigcup_{k=1}^{+\infty} C_{n_k} }}$ is in $K(X)$).
So $\{x_{n_k}\}$ has a subsequence which converges to $x$
in $\widetilde{\overline{\bigcup_{k=1}^{+\infty} C_{n_k} }}$,
and thus
$x\in C \subset X$.

\textbf{\emph{Sufficiency}}. \ If $\mathbf{ D} $ is relatively compact in $X$, then  $\overline{\mathbf{ D}} $ is in $K(X)$,
and
therefore $(K(\overline{\mathbf{ D}}), H)$ is compact.
Thus $\mathcal{D} \subset  K(\overline{\mathbf{ D}}) $ is relatively compact in $(K(X), H)$.

\end{proof}

\begin{lm} \label{coms}
   Let $(X,d)$ be a metric space and $\mathcal{D}\subseteq K(X)$. If $\mathcal{D} $ is compact in $(K(X), H)$,
then
$\mathbf{ D} =   \bigcup \{C:  C \in  \mathcal{D} \} $ is compact in $(X, d)$.
\end{lm}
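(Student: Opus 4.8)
The plan is to combine relative compactness, which comes essentially for free from Theorem~\ref{rce}, with closedness of $\mathbf{D}$. Since $\mathcal{D}$ is compact in $(K(X),H)$, it is in particular relatively compact there, so by Theorem~\ref{rce} the set $\mathbf{D}$ is relatively compact in $X$; equivalently, $\overline{\mathbf{D}}$ is compact. It therefore suffices to show that $\mathbf{D}$ is closed, for then $\mathbf{D}=\overline{\mathbf{D}}$ is compact.

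To prove closedness I would take an arbitrary sequence $x_k\in \mathbf{D}$ with $x_k\to x$ in $X$ and show $x\in\mathbf{D}$. By the definition of $\mathbf{D}$, each $x_k$ lies in some $C_k\in\mathcal{D}$. Since $\mathcal{D}$ is compact in the metric space $(K(X),H)$, it is sequentially compact, so some subsequence $C_{k_j}$ converges to a limit $C\in\mathcal{D}$ in the Hausdorff metric.

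The key step is to pass from Hausdorff convergence of the sets $C_{k_j}$ to membership of the limit point $x$. By Proposition~\ref{hkg}, $H(C_{k_j},C)\to 0$ yields $\lim_{j\to\infty}C_{k_j}\,(K)=C$, so in particular $\limsup_{j\to\infty}C_{k_j}=C$. Now $x_{k_j}\to x$ with $x_{k_j}\in C_{k_j}$, so by the very definition of $\limsup$ we get $x\in\limsup_{j\to\infty}C_{k_j}=C$. Since $C\in\mathcal{D}$, we conclude $x\in C\subseteq\mathbf{D}$, which proves $\mathbf{D}$ is closed and hence compact.

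The only delicate point is the passage in the previous paragraph; everything else is bookkeeping. The main obstacle is really just ensuring that the subsequence extraction lines up, i.e.\ that the same indices $(k_j)$ which make $C_{k_j}$ converge in $H$ also give $x_{k_j}\to x$; this holds automatically because $(x_{k_j})$ is a subsequence of the convergent sequence $(x_k)$. As an alternative route I could prove sequential compactness of $\mathbf{D}$ directly, avoiding Theorem~\ref{rce}: given $x_k\in C_k$ with $C_k\in\mathcal{D}$, extract $C_{k_j}\to C\in\mathcal{D}$, use $d(x_{k_j},C)\le H(C_{k_j},C)\to 0$ to choose $y_j\in C$ with $d(x_{k_j},y_j)\to 0$, and then invoke compactness of $C$ to extract a convergent subsequence of $(y_j)$ whose limit lies in $C\subseteq\mathbf{D}$ and is also the limit of the corresponding subsequence of $(x_{k_j})$.
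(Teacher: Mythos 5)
Your argument is correct. The paper states Lemma~\ref{coms} without printing a proof, so there is nothing to match against, but both of your routes are sound: since $(K(X),H)$ is a metric space, compactness of $\mathcal{D}$ gives sequential compactness, and the extraction of $C_{k_j}\to C\in\mathcal{D}$ together with either Proposition~\ref{hkg} or the direct estimate $d(x_{k_j},C)\le H(C_{k_j},C)$ forces the limit point into $C\subseteq\mathbf{D}$. One small sharpening: since the \emph{whole} sequence $x_{k_j}\in C_{k_j}$ converges to $x$, you in fact get $x\in\liminf_{j}C_{k_j}=C$, so you do not even need to pass through the $\limsup$. Also note that your first route quietly uses the easy direction of Theorem~\ref{rce} (relative compactness of $\mathcal{D}$ implies relative compactness of $\mathbf{D}$), which is stated in the paper before Lemma~\ref{coms} and is not proved using it, so there is no circularity; your second, self-contained route avoids even that dependence and is arguably preferable for that reason.
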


\begin{proof} \ If $\mathcal{D} = \emptyset$, then the desired result follows immediately.
Suppose that
$\mathcal{D} \not= \emptyset$.
  To show that $\mathbf{ D}$ is compact. We only need to show that
each sequence in $\mathbf{ D}$ has a subsequence which converges to a point in $\mathbf{ D}$.

Given a sequence $\{x_n\}$
in   $\mathbf{D}$.
 Suppose that
$x_n \in C_n \in \mathcal{D}$.
Since $\mathcal{D}$ is compact, then $\{C_n\}$ has a subsequence $\{C_{n_k}\}$ converges to $C \in \mathcal{D}$.
Hence, by Theorem \ref{cpm},
$\widetilde{\overline{\bigcup_{k=1}^{+\infty} C_{n_k} }}$ is in $K(\widetilde{X})$
(In fact, $\widetilde{\overline{\bigcup_{k=1}^{+\infty} C_{n_k} }}$ is in $K(\mathbf{D})$).
So $\{ x_{n_k} \}$ has a subsequence which converges to $x$
in $\widetilde{\overline{\bigcup_{k=1}^{+\infty} C_{n_k} }}$. Thus $x \in   C   \subset \mathbf{D}$.

\end{proof}

\begin{re}
{\rm
The converse of the implication in Lemma \ref{coms} does not hold.
Let $(X,d) = \mathbb{R}$ and
$\mathcal{D} = \{ [0,x]:  x\in (0.3,1] \}   \subset   K(\mathbb{R})$.
Then $\mathbf{D} = [0,1] \in K(\mathbb{R})$.
But
$\mathcal{D}$ is not compact in $(  K(\mathbb{R}),    H   )$.
}
\end{re}

\begin{tm} \label{come}
 Let $(X,d)$ be a metric space and $\mathcal{D}\subseteq K(X)$. Then
    the following are equivalent:
    \\
    (\romannumeral1) \ $\mathcal{D} $ is compact in $(K(X), H)$;
        \\
 (\romannumeral2) \
$\mathbf{ D} =   \bigcup \{C:  C \in  \mathcal{D} \} $ is relatively compact in $(X, d)$
and
$\mathcal{D} $ is closed in $(K(X), H)$;
    \\
 (\romannumeral3) \
$\mathbf{ D} =   \bigcup \{C:  C \in  \mathcal{D} \} $ is compact in $(X, d)$
and
$\mathcal{D} $ is closed in $(K(X), H)$.
\end{tm}

\begin{proof}
Note that $\mathcal{D} $ is compact in $(K(X), H)$ if and only if $\mathcal{D} $ is relatively compact and closed in $(K(X), H)$.
 Then from Theorem \ref{rce} we have (\romannumeral1)$\Leftrightarrow$(\romannumeral2). Clearly (\romannumeral3)$\Rightarrow$(\romannumeral2). We
shall complete the proof by showing that (\romannumeral1)$\Rightarrow$(\romannumeral3),
which can be deduced by Lemma \ref{coms}.

\end{proof}

\begin{re}
{\rm
   After we gave
  conclusions and their proofs in this section, we found that Theorem \ref{rce} is Proposition 5 in \cite{greco}.
   Since we can't find the proof of Proposition 5,
   we give our proof here.

}
\end{re}

\subsection{Characterizations of compactness in $(F^1_{USCG} (X), H_{\rm end})$}

In this subsection, we give characterizations of total boundedness, relative compactness and compactness in $(F^1_{USCG} (X), H_{\rm end})$.

Let $U$ be a subset of $(X,d)$.
We say a subset $S$ of $X$ is a \emph{weak}
$\varepsilon$\emph{-net} of $U$
if $d(x,S)<\varepsilon$ for each $x\in U$.

We can see that an $\varepsilon$-net of $U$ must be a weak $\varepsilon$-net of $U$. An $\varepsilon$-net of $U$ is included in $U$, however a weak $\varepsilon$-net of $U$ is not necessarily be included in $U$.
For convenience, we use the term weak $\varepsilon$-net of a set $U$ in $(X,d)$.

For $x\in X$ and $\varepsilon>0$, let $ B(x,\varepsilon) := \{z\in X: d(x,z)<\varepsilon\}$.

The following known conclusions on totally bounded are useful in this paper.
\begin{itemize}
  \item  A set $U$ in $X$ is totally bounded if and only if for each sequence $\{x_n\}$ in $U$, it has a subsequence $\{x_{n_k}\}$ which is a Cauchy sequence.

  \item A set $U$ in $X$ is totally bounded if and only if for each $\varepsilon>0$,
there is a finite weak $\varepsilon$-net of $U$.

\end{itemize}

Below we give a proof of the last conclusion mentioned above, although we are convinced that the proof for this conclusion was already been given.
Readers who think it unnecessary to give the proof or this conclusion is obvious can skip this proof.

Clearly the necessity is true since an $\varepsilon$-net of $U$ is a weak $\varepsilon$-net of $U$.
To show the
sufficiency, it suffices to show how to construct a finite $\varepsilon$-net of $U$ via
a finite weak $\varepsilon/2$-net of $U$.

Let $S_{\varepsilon/2}$ be a finite weak $\varepsilon/2$-net of $U$.
Set $S'_{\varepsilon/2} := \{ y\in S_{\varepsilon/2}:  B(y, \varepsilon/2) \cap U \not= \emptyset  \}$.
Clearly
$S'_{\varepsilon/2}$ is a finite weak $\varepsilon/2$-net of $U$.
Then for each $y\in S'_{\varepsilon/2}$,
we choose an $x_{y}$ in $B(y, \varepsilon/2) \cap U$.
Let
 $T_{\varepsilon} := \{x_{y} : y\in S'_{\varepsilon/2} \}$. We claim that $T_\varepsilon$ is a finite $\varepsilon$-net
of $U$. Clearly $T_\varepsilon$ is a finite subset of $U$.
To complete the proof, we only need to show that
 $d(z, T_\varepsilon) < \varepsilon$ for each $z\in U$.
Let $z\in U$. Then there is a $y\in S'_{\varepsilon/2}$ with $d(z,y) < \varepsilon/2$,
and
thus
$d(z, T_\varepsilon) \leq d(z, x_y) \leq d(z, y) + d(y, x_y) < \varepsilon$.

Suppose that
$U$ is a subset of $F^1_{USC} (X)$ and $\al\in [0,1]$.
For
writing convenience,
we denote
\begin{itemize}
  \item  $U(\al):= \bigcup_{u\in U} [u]_\al$, and

\item  $U_\al : =  \{[u]_\al: u \in U\}$.
\end{itemize}

\begin{tm} \label{tbfegn}
  Let $U$ be a subset of $F^1_{USCG} (X)$. Then $U$ is totally bounded in $(F^1_{USCG} (X), H_{\rm end})$
if and only if
$U(\al)$
is totally bounded in $(X,d)$ for each $\al \in (0,1]$.
\end{tm}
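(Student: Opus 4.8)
The plan is to prove both implications by transferring between the level-set picture in $X$ and the hyperspace picture in $(K(X),H)$ via Theorem \ref{tbe}, which identifies total boundedness of $U(\al)=\bigcup_{u\in U}[u]_\al$ in $X$ with total boundedness of $U_\al=\{[u]_\al:u\in U\}$ in $(K(X),H)$. The necessity direction I would do directly from an elementary endograph-to-level estimate, while the sufficiency direction I would reduce to extracting Cauchy subsequences and then build a limit in the completion so as to invoke Theorem \ref{aedr}.

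For necessity, suppose $U$ is totally bounded in $H_{\rm end}$ and fix $\al\in(0,1]$. I would choose $\de\in(0,\al)$ and a finite $\de$-net $u_1,\dots,u_k\in U$ for $U$. The key elementary estimate is that $H_{\rm end}(u,u_i)<\de$ forces every $x\in[u]_\al$ to lie within $d$-distance $\de$ of $[u_i]_{\al-\de}$: the point $(x,\al)\in{\rm end}\,u$ admits some $(y,s)\in{\rm end}\,u_i$ with $d(x,y)+|\al-s|<\de$, whence $u_i(y)\ge s>\al-\de$ and $d(x,y)<\de$. Since $\al-\de>0$, each $[u_i]_{\al-\de}\in K(X)$ is totally bounded, so the union of the $\de$-neighbourhoods of $[u_1]_{\al-\de},\dots,[u_k]_{\al-\de}$ is totally bounded in $X$; as $U(\al)$ is contained in this union, it too is totally bounded. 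Here the hypothesis $u\in F_{USCG}(X)$, giving compact positive cuts, is exactly what makes this direction work.

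For sufficiency I would use that, in any metric space, a set is totally bounded iff every sequence in it has a Cauchy subsequence. Given $\{u_n\}\subseteq U$, fix a countable dense set $D\subseteq(0,1)$. By Theorem \ref{tbe} the hypothesis gives that each $U_\al$ ($\al\in D$) is totally bounded in $(K(X),H)$, so $\{[u_n]_\al\}_n$ has an $H$-Cauchy subsequence; a diagonal argument over $D$ then produces a single subsequence $\{u_{n_j}\}$ for which $\{[u_{n_j}]_\al\}_j$ is $H$-Cauchy for every $\al\in D$.

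The main obstacle is promoting this level-wise Cauchyness to $H_{\rm end}$-Cauchyness. I would pass to the completion $(\widetilde X,\widetilde d)$, in which $(K(\widetilde X),H)$ is complete by Theorem \ref{bfc}, so $[u_{n_j}]_\al\to C_\al\in K(\widetilde X)$ for each $\al\in D$; the sets $C_\al$ are nonempty and nested (decreasing in $\al$) because Hausdorff limits preserve nonemptiness and inclusions. Setting $\widetilde u(x):=\sup\{\al\in D:x\in C_\al\}$ gives $[\widetilde u]_\al=\bigcap_{\be\in D,\,\be<\al}C_\be$, a nested intersection of nonempty compacta, so $\widetilde u\in F_{USCG}(\widetilde X)$, and $[\widetilde u]_\al=C_\al$ for every $\al\in D$ outside the platform set $P(\widetilde u)$, which is countable by Lemma \ref{gnc}. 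Hence $H([u_{n_j}]_\al,[\widetilde u]_\al)\to0$ for all $\al$ in the set $D\setminus P(\widetilde u)$, still dense in $[0,1]$, and Theorem \ref{aedr}, applied in $\widetilde X$, yields $H_{\rm end}(u_{n_j},\widetilde u)\to0$. Since the endographs lie in $X\times[0,1]$ and $H$ is computed by the same formula in $\widetilde X$, the subsequence $\{u_{n_j}\}$ is $H_{\rm end}$-Cauchy in the original space, completing the proof. The delicate points to verify are the well-definedness and upper semicontinuity of $\widetilde u$, the identification of its cuts with the limits $C_\al$ away from platform points, and that discarding the countably many platform points leaves a dense set of levels on which Theorem \ref{aedr} can be applied.
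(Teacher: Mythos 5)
Your overall strategy (reduce to level sets and use Theorem \ref{tbe} / Theorem \ref{aedr}) is reasonable, but both directions as written contain a false step.

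In the necessity direction, the inference ``each $[u_i]_{\al-\de}$ is totally bounded, so the union of the $\de$-neighbourhoods of $[u_1]_{\al-\de},\dots,[u_k]_{\al-\de}$ is totally bounded'' is not valid in a general metric space: the $\de$-neighbourhood of even a single point need not be totally bounded (take $X$ infinite with the discrete metric $d\equiv 1$ off the diagonal and $\de=2$). This is repairable, because you do not need that claim: for a given $\var>0$ choose $\de<\min\{\al,\var/2\}$, take a finite $\de$-net $u_1,\dots,u_k$ of $U$, and then a finite $\de$-net $S$ of the compact set $\bigcup_{i=1}^k[u_i]_{\al-\de}$; your estimate shows every $x\in U(\al)$ lies within $2\de<\var$ of $S$, so $S$ is a finite $\var$-net for $U(\al)$. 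The point is that $\de$ must be tied to the target $\var$ rather than fixed once.

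In the sufficiency direction the gap is more serious. You fix the countable dense set $D$ \emph{before} constructing $\widetilde u$, and then claim that $D\setminus P(\widetilde u)$ is still dense because $P(\widetilde u)$ is countable. Two countable dense sets can coincide: there are fuzzy sets $v\in F_{USCG}(X)$ with $P(v)=\mathbb{Q}\cap(0,1)$ (e.g.\ on $X=[0,1]$ put $v(0)=1$, $v(1/n)=q_n$ for an enumeration $\{q_n\}$ of the rationals in $(0,1)$, and $v=0$ elsewhere; every $q_m$ is then a platform point), and taking $u_n\equiv v$ with $D=\mathbb{Q}\cap(0,1)$ gives $\widetilde u=v$ and $D\setminus P(\widetilde u)=\emptyset$. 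So you cannot feed Theorem \ref{aedr} a dense set of levels this way, and you cannot repair it by re-choosing $D$ after the fact, since $D$ was needed to build $\widetilde u$. The clean fix is to drop the completion and the limit object entirely: if $\{[u_{n_j}]_\al\}_j$ is $H$-Cauchy for every $\al$ in a dense subset $D$ of $(0,1)$, then $\{u_{n_j}\}$ is already $H_{\rm end}$-Cauchy. Indeed, given $\var>0$ pick $\al_1<\dots<\al_k$ in $D$ with $\al_1<\var$, $\al_k>1-\var$ and consecutive gaps less than $\var$, and take $J$ with $H([u_{n_j}]_{\al_i},[u_{n_{j'}}]_{\al_i})<\var$ for all $i$ and $j,j'\ge J$; a point $(x,t)\in{\rm end}\,u_{n_j}$ with $t<\al_1$ is within $\var$ of $(x,0)\in{\rm end}\,u_{n_{j'}}$, and for $t\ge\al_1$ one moves down to the largest $\al_i\le t$ and across to $[u_{n_{j'}}]_{\al_i}$, giving $\overline{d}$-distance at most $2\var$. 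This is the Cauchy analogue of Theorem \ref{aedr} and closes the argument without any appeal to $P(\widetilde u)$.
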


\begin{proof}
   \textbf{\emph{Necessity}}.  Suppose that $U$ is totally bounded in $(F^1_{USCG} (X), H_{\rm end})$.
Let $\al \in (0,1]$.
To show
that
$U(\al)$
is totally bounded in $X$, we only need to show that
each sequence in $U(\al)$
has a Cauchy subsequence.

Given a sequence $\{x_n\} \subset U(\al)$. Suppose that $x_n \in  [u_n]_\al$,  $u_n  \in  U$, $n=1,2,\ldots$.
Then
$\{u_n\}$ has
 a Cauchy subsequence $\{u_{n_l}\}$. So given $\varepsilon\in (0, \al)$, there is a $K( \varepsilon) \in \mathbb{N}$
such that
\begin{equation*}\label{ends}
H_{\rm end} (u_{n_l},  u_{n_K})  <   \varepsilon
\end{equation*}
for all $l \geq K$.
Thus
\begin{equation}\label{cuts}
H^*([u_{n_l}]_\al,       [u_{n_K}]_{\al-\varepsilon}) <  \varepsilon
\end{equation}
for all $l \geq K$.
From \eqref{cuts} and the arbitrariness of $\varepsilon$,  $\bigcup_{l=1} ^{+\infty} [u_{n_l}]_\al $ is totally bounded in $(X,d)$.
Thus
$\{x_{n_l}\}$, which is a subsequence of $\{x_n\}$,
has a Cauchy subsequence, and so does $\{x_n\}$.

In the following,
we give
a more detailed proof for the above conclusion that
        $\bigcup_{l=1} ^{+\infty} [u_{n_l}]_\al $ is totally bounded in $(X,d)$, although
we think the proof given above for this conclusion is sufficient.

To show the desired result, it suffices to show that
for each $\lambda>0$, there exists a finite weak $\lambda$-net of  $ \bigcup_{l=1} ^{+\infty} [u_{n_l}]_\al$.

Let $\lambda>0$. Set $\varepsilon= \min\{\lambda/2, \al/2\} $.
Then $\varepsilon \in (0, \al)$. Hence there is a $K(\varepsilon)$ such that \eqref{cuts} holds for all $l\geq K$.
Since $\bigcup_{l=1} ^{K}[u_{n_l}]_{\al-\varepsilon} $ is compact,
there is a finite $\varepsilon$-net $\{z_j\}_{j=1}^{m}$ of $\bigcup_{l=1} ^{K}[u_{n_l}]_{\al-\varepsilon} $. We claim that $\{z_j\}_{j=1}^{m}$
is a finite weak $\lambda$-net of $\bigcup_{l=1} ^{+\infty} [u_{n_l}]_\al$.

Let $z \in \bigcup_{l=1} ^{+\infty} [u_{n_l}]_\al$.
If $z \in \bigcup_{l=1} ^{K} [u_{n_l}]_\al$, then clearly $d(z, \{z_j\}_{j=1}^{m}) < \varepsilon<\lambda$.
If $z \in \bigcup_{l=K+1} ^{+\infty} [u_{n_l}]_\al$,
then
by \eqref{cuts},
  there exists a $y_z \in [u_{n_K}]_{\al-\varepsilon}$
such that
$d(z, y_z) <\varepsilon $,
and so
$d(z, \{z_j\}_{j=1}^{m}) \leq  d(z, y_z) + d(y_z, \{z_j\}_{j=1}^{m} ) < 2\varepsilon \leq \lambda$.

\textbf{\emph{Sufficiency}}.
Suppose that $U(\al)$ is totally bounded in $X$ for each $\al\in (0,1]$.
By Theorem \ref{tbe},
$U(\al)$ is totally bounded in $X$
if and only if
 $U_{\al}$ is totally bounded in $(K(X), H)$.
Thus, by Theorem \ref{bfc}, we have the following affirmation:
\begin{itemize}
  \item
   Given $\al \in (0,1]$.
For each sequence $\{[u_n]_\al, n=1,2,\ldots\}$ in $U_{\al}$,
it has a subsequence
 $\{[u_{n_k}]_\al, k=1,2,\ldots\}$ which converges to $u_\al  \in K(\widetilde{X}) $ with respect to the Hausdorff metric $H$.
 \end{itemize}

To prove that $U$ is totally bounded, it suffices to show that
each sequence in $U$ has a convergent subsequence in $(F^1_{USCG} (\widetilde{X}), H_{\rm end})$.
Suppose that
$\{u_n\}$ is a sequence in $U$. Based on the above affirmation and Theorem \ref{aec},
  and
 proceeding similarly to the proof of the ``Sufficiency part'' of Theorem 7.1 in \cite{huang}, it can be shown
that
$\{u_n\}$
has a subsequence $\{v_n\}$ which converges to $v \in F^1_{USCG} (\widetilde{X})$ with respect to $H_{\rm end}$.

A sketch of the
proof of the existence of $\{v_n\}$ and $v$ is given as follows.

First, we construct a subsequence $\{v_n\}$ of $\{u_n\}$
such that
$[v_n]_q$ converges to $u_q \in K(\widetilde{X})$ according to the Hausdorff metric $H$ for all $q\in \mathbb{Q}'$, where $\mathbb{Q}' = \mathbb{Q} \cap (0,1]$.
Then
 we show that
 $v\in F^1_{USCG}(\widetilde{X})$ with
$[v]_\al = \bigcap_{q<\al, q\in \mathbb{Q}'} u_q$ for all $\al \in (0,1]$
satisfies that
$H_{\rm end} (v_n, v) \to 0$.

\end{proof}

\begin{re} {\rm Some of the implications in the proofs of this paper
are actually the equivalent.
For example, in the proof of Theorem \ref{tbfegn},
   $U(\al)$ is totally bounded in $X$ for each $\al\in (0,1]$
is equivalent to the affirmation after the ``$\bullet$''
}
\end{re}

\begin{tm} \label{rcfegn}
  Let $U$ be a subset of $F^1_{USCG} (X)$. Then $U$ is relatively compact in $(F^1_{USCG} (X), H_{\rm end})$
if and only if
$U(\al)$
is relatively compact in $(X, d)$ for each $\al \in (0,1]$.
\end{tm}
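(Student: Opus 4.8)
The plan is to prove both implications by unpacking relative compactness into its sequential form---every sequence has a subsequence converging in the ambient space---and then transferring between the fuzzy set and its $\al$-cuts by means of the level decomposition of $H_{\rm end}$ (Theorem \ref{aec}), the countability of platform points (Lemma \ref{gnc}), and, where convenient, the cut-level reformulation $U(\al)$ relatively compact in $X$ $\Longleftrightarrow$ $U_\al$ relatively compact in $(K(X),H)$ supplied by Theorem \ref{rce}. The two directions parallel the totally bounded case (Theorem \ref{tbfegn}), and the only genuinely new work lies in producing a limit inside $F_{USCG}(X)$.

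For the ``only if'' direction, assume $U$ is relatively compact and fix $\al\in(0,1]$; I would verify directly that every sequence $\{x_m\}$ in $U(\al)$ has a convergent subsequence. Write $x_m\in[u_m]_\al$ with $u_m\in U$; relative compactness of $U$ yields a subsequence with $u_{m_k}\str{H_{\rm end}}{\longrightarrow}u$ for some $u\in F_{USCG}(X)$. By Lemma \ref{gnc} the set $P(u)$ is at most countable, so I may choose $\be\in(0,\al)\setminus P(u)$; Theorem \ref{aec} then gives $H([u_{m_k}]_\be,[u]_\be)\to 0$. Since $x_{m_k}\in[u_{m_k}]_\al\subseteq[u_{m_k}]_\be$ and $[u]_\be\in K(X)$, the points $x_{m_k}$ lie within vanishing distance of a fixed compact set, so a sub-subsequence converges. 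Hence $U(\al)$ is relatively compact.

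For the ``if'' direction, assume each $U(\al)$, $\al\in(0,1]$, is relatively compact. Relative compactness implies total boundedness, so Theorem \ref{tbfegn} shows $U$ is totally bounded; thus any sequence $\{u_n\}$ in $U$ admits a Cauchy subsequence in $H_{\rm end}$, and it suffices to exhibit a limit lying in $F_{USCG}(X)$. Fix a countable dense set $Q\subseteq(0,1)$. For each $\al\in Q$ the cuts $\{[u_n]_\al\}$ lie in $K(\overline{U(\al)})$, which is compact by Theorem \ref{bfc}(iii) because $\overline{U(\al)}$ is compact; a diagonal extraction then produces a single subsequence $\{u_{n_k}\}$ with $[u_{n_k}]_\al\to C_\al$ in $(K(X),H)$ for every $\al\in Q$. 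The family $\{C_\al\}$ is nested, since inclusions pass to Hausdorff limits through the Kuratowski convergence of Proposition \ref{hkg}, and I would define the candidate limit by $[u]_\al:=\bigcap_{\be\in Q,\,\be<\al}C_\be$ for $\al\in(0,1]$. This gives a left-continuous nested family of closed sets whose positive cuts are closed subsets of some compact $C_\be$, hence compact, and normality ($[u]_1\neq\emptyset$) follows by extracting a convergent subsequence of points chosen from $[u_{n_k}]_1$ inside a fixed compact $C_{\be_0}$; so $u\in F_{USCG}(X)$.

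It then remains to show that $[u]_\al=C_\al$ for all $\al\in Q$ outside a countable set of jump levels, so that $H([u_{n_k}]_\al,[u]_\al)=H([u_{n_k}]_\al,C_\al)\to 0$ on a dense subset of $[0,1]$; Theorem \ref{aedr} upgrades this to $H_{\rm end}(u_{n_k},u)\to 0$, giving relative compactness of $U$. The main obstacle is precisely this construction step: confirming that the left-continuous representation $[u]_\al=\bigcap_{\be<\al}C_\be$ recovers the limiting cuts $C_\al$ at all but countably many levels, and that $u$ truly belongs to $F_{USCG}(X)$. Controlling the interplay between left-continuity of cuts, the platform points, and the Hausdorff limits $C_\al$ is the delicate part, and it is the feature most sensitive to the passage from $\mathbb{R}^m$ to a general metric space.
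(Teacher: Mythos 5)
Your ``only if'' direction is complete and correct: passing to an $H_{\rm end}$-convergent subsequence, picking $\be\in(0,\al)\setminus P(u)$ via Lemma \ref{gnc}, and using Theorem \ref{aec} to trap the points $x_{m_k}$ within vanishing distance of the compact set $[u]_\be$ is exactly the right mechanism. In the ``if'' direction, the construction (diagonal extraction of the cuts over a countable dense $Q$ inside the compact spaces $(K(\overline{U(\al)}),H)$ supplied by Theorem \ref{bfc}, the nested limits $C_\al$, and the left-continuous candidate $[u]_\al=\bigcap_{\be\in Q,\,\be<\al}C_\be$) is also sound, and $u\in F_{USCG}(X)$ does follow as you indicate.

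The genuine gap is the step you yourself flag as the ``main obstacle'', and in the form you state it it cannot work: you propose to show $[u]_\al=C_\al$ ``for all $\al\in Q$ outside a countable set of jump levels''. Since $Q$ is itself countable, every subset of $Q$ is ``$Q$ outside a countable set'' (possibly the empty set), so the assertion is vacuous and in particular does not yield a \emph{dense} set of levels on which $H([u_{n_k}]_\al,[u]_\al)\to 0$, which is what Theorem \ref{aedr} requires. The repair is to argue at \emph{all} levels $\al\in(0,1)$, not only those in $Q$: sandwiching $[u_{n_k}]_\al$ between $[u_{n_k}]_\gamma$ and $[u_{n_k}]_\be$ for $\be<\al<\gamma$ with $\be,\gamma\in Q$, and letting $\be\uparrow\al$, $\gamma\downarrow\al$, shows that $H([u_{n_k}]_\al,[u]_\al)\to 0$ whenever the left limit $\bigcap_{\be\in Q,\,\be<\al}C_\be$ coincides with the right limit $\overline{\bigcup_{\gamma\in Q,\,\gamma>\al}C_\gamma}$. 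The exceptional set where these differ is at most countable: if for some fixed $\varepsilon>0$ and some $m$ it contained infinitely many points $\al_1<\al_2<\cdots$ in $[1/m,1)$, one could choose $x_j$ in the left limit at $\al_j$ with $d\bigl(x_j,\overline{\bigcup_{\gamma\in Q,\,\gamma>\al_j}C_\gamma}\bigr)\ge\varepsilon$; since $x_{j'}$ lies in $C_\gamma$ for $\gamma\in Q\cap(\al_j,\al_{j'})$, the points $x_j$ are pairwise $\varepsilon$-separated inside the compact set $\overline{U(1/m)}$, a contradiction. Hence the good levels are co-countable, therefore dense in $(0,1)$ and a fortiori in $[0,1]$, and Theorem \ref{aedr} finishes the proof. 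Without this (or an equivalent) argument your ``if'' direction is incomplete at precisely the point where the real difficulty of the theorem sits.
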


\begin{proof}
    \textbf{\emph{Necessity}}. Suppose that $U$ is relatively compact. Given $\al \in (0,1]$.
To show that
$U(\al)$
is relatively compact in $X$, we only need to show that each sequence
in $U(\al)$ has a convergent subsequence in $X$.

 Let $\{x_n\}$ be a sequence in $U(\al)$.
Suppose
that $x_n\in [u_n]_\al$, $u_n \in U$, $n=1,2,\ldots$.
Then
 there is a subsequence $\{ u_{n_k} \}$ of $\{u_n\}$ and $u \in F^1_{USCG} (X)$
  such that
$H_{\rm end} (u_{n_k}, u) \to 0$.
So, by Theorem \ref{aec}, $H([u_{n_k}]_\al, [u]_\al) \str{\rm a.e.}{\longrightarrow} 0$,
and therefore there is a $\beta \in (0, \al)$ such that
$H([  u_{n_k}  ]_\beta, [u]_\beta) \to 0$. Hence by Theorem \ref{rce}, $\bigcup_{k=1}^{+\infty}[  u_{n_k}  ]_\beta$ is relatively compact in $X$.
Thus
$\{x_{n_k}\}$ has a convergent subsequence in $X$, and so does $\{x_n\}$.

\emph{\textbf{Sufficiency}}.
Suppose that $U(\al)$ is relatively compact in $X$ for each $\al\in (0,1]$.
To show that $U$ is relatively compact
in $(F^1_{USCG} (X), H_{\rm end})$,
we only need to show that each sequence
in $U$ has a convergent subsequence in $(F^1_{USCG} (X), H_{\rm end})$.

By Theorem \ref{rce}, $U(\al)$ is relatively compact in $X$ if and only if $U_{\al}$ is relatively compact in $K(X)$.
Thus, we have the following affirmation:
\begin{itemize}
  \item Given $\al \in (0,1]$.
For each sequence $\{[u_n]_\al, n=1,2,\ldots\}$ in $U_{\al}$,
it has a subsequence
 $\{[u_{n_k}]_\al, k=1,2,\ldots\}$ which converges to $u_\al  \in K(X) $ with respect to the Hausdorff metric $H$.
 \end{itemize}
The remaining proof is similar to the corresponding part   of the ``Sufficiency part'' of Theorem \ref{tbfegn}.

We can also prove that $U$ is relatively compact
 in $(F^1_{USCG} (X), H_{\rm end})$ as follows.
From
the
``Sufficiency part'' of Theorem \ref{tbfegn}, we know
that
for each sequence $\{u_n\}$ in $U$,
there exists a subsequence $\{v_n\}$ of $\{u_n\}$ which converges to $v\in F^1_{USCG} (\widetilde{X})$.
From Theorem \ref{aec}
and
 the above statement after the ``$\bullet$'',
 we thus know
 that
 $v\in F^1_{USCG} (X)$.

\end{proof}

\begin{tm}\label{cfeg}
  Let $U$ be a subset of $F^1_{USCG} (X)$. Then the following are equivalent:
\begin{enumerate}
\renewcommand{\labelenumi}{(\roman{enumi})}

\item
 $U$ is compact in $(F^1_{USCG} (X), H_{\rm end})$;

\item  $U(\al)$
is relatively compact in $(X, d)$ for each $\al \in (0,1]$ and $U$ is closed in $(F^1_{USCG} (X), H_{\rm end})$;

\item  $U(\al)$
is compact in $(X, d)$ for each $\al \in (0,1]$ and $U$ is closed in $(F^1_{USCG} (X), H_{\rm end})$.
\end{enumerate}

\end{tm}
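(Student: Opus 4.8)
The plan is to reduce the three-way equivalence to the relative-compactness characterization already in hand (Theorem \ref{rcfegn}) together with the standard metric-space fact that a subset of a metric space is compact if and only if it is relatively compact and closed. The only genuinely new content is upgrading ``relatively compact'' to ``compact'' at the level of the cut-unions $U(\al)$, and this is precisely where the platform points must be handled.

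First I would establish (i) $\Leftrightarrow$ (ii). Since $(F_{USCG}(X),H_{\rm end})$ is a metric space, $U$ is compact iff $U$ is relatively compact and closed in it. By Theorem \ref{rcfegn}, $U$ is relatively compact in $(F_{USCG}(X),H_{\rm end})$ iff $U(\al)$ is relatively compact in $X$ for every $\al\in(0,1]$. Splicing these two statements together yields (i) $\Leftrightarrow$ (ii) at once.

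Next, (iii) $\Rightarrow$ (ii) is immediate because a compact subset of $X$ is relatively compact, so the substance lies in (ii) $\Rightarrow$ (iii). Assuming $U$ is closed and each $U(\al)$ relatively compact, I must show each $U(\al)$ is in fact compact; by the already-proved (ii) $\Rightarrow$ (i), $U$ is compact, hence sequentially compact, so since $U(\al)$ is relatively compact it suffices to prove $U(\al)$ is closed. To that end I would take $x\in\overline{U(\al)}$ and a sequence $x_k\to x$ with $x_k\in[u_k]_\al$ and $u_k\in U$; sequential compactness of $U$ furnishes a subsequence with $H_{\rm end}(u_{k_j},u)\to0$ for some $u\in U$. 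The obstruction is that $\al$ may lie in $P(u)$, so $H$-convergence of the $\al$-cuts themselves is not available. To circumvent this I would invoke Lemma \ref{gnc}, by which $P(u)=P_0(u)$ is at most countable, and choose $\be_m\uparrow\al$ with $\be_m\in(0,\al)\setminus P(u)$. For each fixed $m$, Theorem \ref{hepc} gives $H([u_{k_j}]_{\be_m},[u]_{\be_m})\to0$ as $j\to\infty$; since $x_{k_j}\in[u_{k_j}]_\al\subseteq[u_{k_j}]_{\be_m}$ and $x_{k_j}\to x$, Proposition \ref{hkg} (Kuratowski convergence captures limits of selected points) forces $x\in[u]_{\be_m}$. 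Letting $m\to\infty$ and using the cut identity $\bigcap_{m}[u]_{\be_m}=\bigcap_{\be<\al}[u]_\be=[u]_\al$, which follows directly from $[u]_\al=\{x:u(x)\ge\al\}$, I obtain $x\in[u]_\al\subseteq U(\al)$. Hence $U(\al)$ is closed, and being relatively compact it is compact, which finishes (ii) $\Rightarrow$ (iii).

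The main obstacle is exactly the platform level $\al\in P(u)$, where $H_{\rm end}$-convergence need not descend to Hausdorff convergence of the $\al$-cuts; my remedy is the countability of $P(u)$ from Lemma \ref{gnc} together with the identity $[u]_\al=\bigcap_{\be<\al}[u]_\be$, which lets me recover membership in $[u]_\al$ from the non-platform levels just below $\al$. Everything else is bookkeeping with the general ``compact $=$ relatively compact $+$ closed'' principle and a citation to Theorem \ref{rcfegn}.
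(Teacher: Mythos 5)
Your proposal is correct and follows essentially the route the paper's own structure dictates: reduce to Theorem \ref{rcfegn} via the ``compact $=$ relatively compact $+$ closed'' principle, and upgrade relative compactness of $U(\al)$ to compactness by proving closedness, using the countability of $P(u)$ (Lemma \ref{gnc}), Theorem \ref{hepc} at non-platform levels $\be_m\uparrow\al$, and the identity $[u]_\al=\bigcap_{\be<\al}[u]_\be$. This is exactly parallel to how the paper derives Theorem \ref{come} from Theorem \ref{rce} and Lemma \ref{coms}, with your $\be_m$-approximation correctly handling the platform-point obstruction.
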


\begin{proof}
By Theorem \ref{rcfegn},
  (\romannumeral1) $\Leftrightarrow$ (\romannumeral2).
  Obviously (\romannumeral3) $\Rightarrow$ (\romannumeral2).
We
shall complete the proof by showing that
 (\romannumeral1) $\Rightarrow$
 (\romannumeral3). To do this,
suppose that $U$ is compact.
To verify (\romannumeral3), from the equivalence of (\romannumeral1) and (\romannumeral2),
we only need to
show that $U(\al)$ is closed in $(X,d)$ for each $\al\in (0,1]$.

Let $\al\in (0,1]$
and
let $\{x_n\}$ be a sequence in $U(\al)$ with $x_n \to x$.
Suppose that
$x_n \in [u_n]_\al$ and $u_n \in  U$ for $n=1,2,\ldots$.
Then
there exist subsequence $\{u_{n_k}\}$ of $\{u_n\}$ and $u\in U$
such that $H_{\rm end}(u_{n_k}, u) \to 0$.
So
by Remark \ref{hkr} $\lim_{n\to \infty}^{(\Gamma)}  u_{n_k} = u $ and therefore by Theorem \ref{Gclnre},
$\limsup_{n\to \infty}[u_{n_k}]_\alpha
\subseteq
 [u]_\alpha$.
Hence
$x\in [u]_\al$, and thus $x\in U(\al)$.

We can also show $x\in  [u]_\al \subseteq U(\al)$ in the following way.
From Theorem \ref{uscgre}, $H([u_{n_k}]_\al, [u]_\al) \rightarrow 0$ holds for $\al\in (0,1)\setminus P_0(u)$.
If $\al \in (0,1) \setminus P_0(u)$, then
$x\in [u]_\al$.
If $\al \in \{1\} \cup P_0(u)$, then for all $\beta\in  (0,\al) \setminus P_0(u)$, $x\in [u]_\beta$.
Thus
$x\in [u]_\al$.

\end{proof}

\subsection{Characterizations of compactness in $(P^1_{USCB} (X), H_{\rm send})$ and $(F^1_{USCB} (X), H_{\rm send})$}\label{pfc}

In this
subsection,
we give
 the characterizations of totally bounded sets, relatively compact sets and compact sets
in
$(P^1_{USCB} (X), H_{\rm send})$.
Then,
by treating
$(F^1_{USCB} (X), H_{\rm send})$ as a metric subspace of $(P^1_{USCB} (X), H_{\rm send})$,
 we give the characterizations of totally bounded sets and compact sets
in $(F^1_{USCB} (X), H_{\rm send})$.
The characterization of
relatively compact sets
in
$(F^1_{USCB} (X), H_{\rm send})$ has already been given in \cite{greco}.

Suppose that
$U$ is a subset of $P^1_{USC} (X)$ and $\al\in [0,1]$.
For
writing convenience,
we denote
\begin{itemize}
  \item  $U(\al):= \bigcup_{u\in U} \langle u \rangle_\al$, and

\item  $U_\al : =  \{\langle u \rangle_\al: u \in U\}$.
\end{itemize}

\begin{tm} \label{tbpu}
   Suppose that $U$ is a subset of $P^1_{USCB} (X)$. Then $U$ is totally bounded in $(P^1_{USCB} (X), H_{\rm send})$
if and only if
$U(0)$ is totally bounded in $(X,d)$.
\end{tm}

\begin{proof}
\textbf{\emph{ Necessity}}. \
 Suppose that $U$ is totally bounded. By clause (\romannumeral2) of Theorem \ref{pseu},
$U_0 $ is  totally bounded in $(K(X), H)$.
 From Theorem \ref{tbe}, this is equivalent to
$U(0)$ is totally bounded in $(X,d)$.

\textbf{\emph{Sufficiency}}. \
Suppose that $U(0)$ is totally bounded.
To show that $U$ is totally bounded in $(P^1_{USCB} (X), H_{\rm send})$, we
only need to prove
that each sequence in $U$ has a Cauchy subsequence with respect to $H_{\rm send}$.

Let
$\{u_n\}$ be a sequence in $U$. Note that $U(\al)$ is totally bounded for each $\al \in [0,1]$.
Then by Theorem \ref{tbfegn},
 $\{\overleftarrow{u_n}\}$ has a Cauchy subsequence $\{v_n\}$ in $(F^1_{USCB} (X), H_{\rm end})$.
From Theorem \ref{tbe}, $\{v_n\}$ has a subsequence $\{w_n\}$
such that
$\{ [w_n]_0 \}$
is a Cauchy sequence
in $(K(X), H)$.
 Thus by clauses (\romannumeral3) of Theorem \ref{pseu},
 $\{w_n\}$ is a Cauchy sequence in $(P^1_{USCB} (X), H_{\rm send})$.

\end{proof}

\begin{tm} \label{tbfe}
   Suppose that $U$ is a subset of $F^1_{USCB} (X)$. Then $U$ is totally bounded in $(F^1_{USCB} (X), H_{\rm send})$
if and only if
$U(0)$ is totally bounded in $(X,d)$.
\end{tm}

\begin{proof}
Note that $U$ is totally bounded in $(F^1_{USCB} (X), H_{\rm send})$
 if and only if
$\overrightarrow{U}$ is totally bounded in $(P^1_{USCB} (X), H_{\rm send})$,
and that
$U(0) = \overrightarrow{U}(0)$.
So the desired result follows from
Theorem \ref{tbpu}.

\end{proof}

\begin{tm}  \label{rcgu}
   Suppose that $U$ is a subset of $P^1_{USCB} (X)$. Then $U$ is relatively compact in $(P^1_{USCB} (X), H_{\rm send})$
if and only if
$U(0)$ is relatively compact in $X$.
\end{tm}

\begin{proof}
 \emph{\textbf{ Necessity}}. \  Suppose that $U$ is relatively compact. Then by clauses (\romannumeral2) of Theorem \ref{pseu},
$U_0 $ is  relatively compact in $(K(X), H)$.
 By Theorem \ref{rce},
$U(0)$ is  relatively compact in $X$.

\textbf{\emph{Sufficiency}}. \ To prove that $U$ is relatively compact, it suffices
to
show that each sequence in $U$
has a convergent subsequence
in $(P^1_{USCB} (X), H_{\rm send})$.

Let $\{u_n\}$ be a sequence in $U$.
Since $U(0)$ is relatively compact in $X$, then $U(\al)$ is relatively compact in $X$ for each $\al \in [0,1]$.
By Theorems \ref{rcfegn} and \ref{rce},
there is a subsequence $\{u_{n_k} \}$ of $\{u_n\}$,
 an $u\in F^1_{USCG} (X)$ and a $u_0 \in K(X)$
such that
$H_{\rm end} (\overleftarrow{u_{n_k}}, u) \to 0$ and $H(\langle u_{n_k}\rangle_0, u_0) \to 0$.

Set $w \in P^1_{USCB}(X)$
 given by
\[
\langle w \rangle_\al=\left\{
           \begin{array}{ll}
           [u]_\al, &  \al\in(0,1],
\\
             u_0,  & \al=0.
           \end{array}
         \right.
\]
Then $u=\overleftarrow{w}$, $H_{\rm end}(u_{n_k}, w) = H_{\rm end}(\overleftarrow{u_{n_k}}, u) \to 0$
and
$H(\langle u_{n_k}\rangle_0, \langle w\rangle_0) = H(\langle u_{n_k}\rangle_0, u_0) \to 0$ for $n=1,2,\ldots$
 Thus from (\romannumeral3) or (\romannumeral4) of Theorem \ref{pseu},
 $\{u_{n_k}\}$ converges to
 $w$ in $(P^1_{USCB} (X), H_{\rm send})$.

\end{proof}

\begin{itemize}
\item
$u \in F^1_{USC} (X)$ is said to be right-continuous at $0$ if $\lim_{\delta\to 0+} H([u]_\delta, [u]_0) =0$.

  \item
$U \subset F^1_{USC} (X)$ is said to be equi-right-continuous at $0$ if for each $\varepsilon>0$,
there is a $\delta>0$
such that
$H([u]_\delta, [u]_0) < \varepsilon$ for all $u \in U$.

\end{itemize}
By Lemma \ref{c},
for each
$u \in F^1_{USCB} (X)$, $u$ is right-continuous at $0$.

Theorem \ref{rcfe} below is presented in \cite{greco}.

\begin{tm} \cite{greco}   \label{rcfe}
   Suppose that $U$ is a subset of $F^1_{USCB} (X)$. Then $U$ is relatively compact in $(F^1_{USCB} (X), H_{\rm send})$
if and only if
$U(0)$ is relatively compact in $X$ and $U$ is equi-right-continuous at $0$.
\end{tm}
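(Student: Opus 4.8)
The plan is to characterize relative compactness in $(F_{USCB}(X), H_{\rm send})$ by combining the sendograph/endograph splitting of Proposition~\ref{sge}(i), the relative compactness criterion for the endograph metric on $F_{USCG}(X)$ (Theorem~\ref{rcfegn}), and the structural behaviour of the zero-cut. Recall that $H_{\rm send}(u_n,u)\to 0$ is equivalent to $H_{\rm end}(u_n,u)\to 0$ together with $H([u_n]_0,[u]_0)\to 0$. So relative compactness in the sendograph metric should amount to relative compactness in the endograph metric \emph{plus} a uniform control on how the cuts approach the support, and the equi-right-continuity at $0$ is precisely the hypothesis that supplies that uniform control. Since $F_{USCB}(X)\subset F_{USCG}(X)$, I can legitimately invoke the $H_{\rm end}$ theory on $U$ regarded inside $F_{USCG}(X)$.

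First I would prove the forward direction. Assume $U$ is relatively compact in $(F_{USCB}(X), H_{\rm send})$. To get that $U(0)$ is relatively compact in $X$, I would take an arbitrary sequence of points $x_k\in U(0)$, each $x_k\in[u_k]_0$ for some $u_k\in U$; by relative compactness pass to a subsequence with $H_{\rm send}(u_{k_j}, u)\to 0$ for some limit $u$, which in particular gives $H([u_{k_j}]_0,[u]_0)\to 0$, and then use the Hausdorff convergence of the supports (compact by $u\in F_{USCB}(X)$) to extract a convergent subsequence of the $x_{k_j}$; this shows every sequence in $U(0)$ has a subsequence converging in $X$, i.e.\ $U(0)$ is relatively compact. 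For equi-right-continuity at $0$, I would argue by contradiction: if it fails, there is $\varepsilon_0>0$ and sequences $u_n\in U$ and $\delta_n\downarrow 0$ with $H([u_n]_{\delta_n},[u_n]_0)\geq\varepsilon_0$; relative compactness yields a subsequence $u_{n_j}\to u$ in $H_{\rm send}$, hence in $H_{\rm end}$, and the single limit fuzzy set $u\in F_{USCB}(X)$ is right-continuous at $0$ in the sense $H([u]_\delta,[u]_0)\to 0$ as $\delta\downarrow 0$, which I would leverage to contradict the uniform lower bound $\varepsilon_0$.

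For the converse, assume $U(0)$ is relatively compact in $X$ and $U$ is equi-right-continuous at $0$. Since $U(\al)\subseteq U(0)$ for every $\al\in(0,1]$, relative compactness of $U(0)$ forces $U(\al)$ to be relatively compact for each such $\al$, so by Theorem~\ref{rcfegn} the set $U$ is relatively compact in $(F_{USCG}(X),H_{\rm end})$. Thus any sequence in $U$ has an $H_{\rm end}$-convergent subsequence $u_{n_j}\to u$. It remains to upgrade this to $H_{\rm send}$-convergence, i.e.\ by Proposition~\ref{sge}(i) to show $H([u_{n_j}]_0,[u]_0)\to 0$. This is where equi-right-continuity enters: given $\varepsilon>0$, pick $\delta$ uniformly so that $H([u_{n_j}]_\delta,[u_{n_j}]_0)<\varepsilon$ for all $j$ (and also $H([u]_\delta,[u]_0)<\varepsilon$ for the limit, using that $u$ inherits right-continuity), then control $H([u_{n_j}]_\delta,[u]_\delta)$ via the level-convergence consequence of $H_{\rm end}$-convergence (Theorem~\ref{hepc}, at a level $\delta\notin P_0(u)$, which is available since $P_0(u)$ is at most countable by Lemma~\ref{gnc}), and combine the three estimates with the triangle inequality for $H$ to bound $H([u_{n_j}]_0,[u]_0)$ by roughly $3\varepsilon$. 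One subtlety is that the limit $u$ must itself lie in $F_{USCB}(X)$ (its support compact) rather than merely in $F_{USCG}(X)$; I would secure this from $U(0)$ relatively compact together with the convergence of the cuts, so the completion structure recorded before the theorem is not actually needed for the limit.

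The main obstacle I expect is the converse's final step: turning $H_{\rm end}$-convergence into $H([u_{n_j}]_0,[u]_0)\to 0$. The delicacy is that $H_{\rm end}$ gives no direct information at $\al=0$, and the zero-cut is a closure that can behave discontinuously, so the whole weight of the argument rests on choosing the threshold $\delta$ \emph{uniformly} across the sequence (supplied by equi-right-continuity) and simultaneously avoiding the exceptional countable set $P_0(u)$ where level-convergence may fail. Making these two choices compatible --- a single $\delta$ that is both small enough for the equi-right-continuity estimate and outside $P_0(u)$ --- and verifying that the limit's support is genuinely compact, is the technical heart of the proof.
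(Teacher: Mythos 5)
Your proposal is correct and follows essentially the route the paper's machinery is built for: split $H_{\rm send}$-convergence into $H_{\rm end}$-convergence plus Hausdorff convergence of the $0$-cuts via Proposition~\ref{sge}(i), reduce to Theorem~\ref{rcfegn} for the endograph part, and recover the $0$-cut convergence by a three-term triangle inequality at a level $\delta$ that is simultaneously small (equi-right-continuity, which is monotone in $\delta$) and outside the at most countable set $P_0(u)$ (Lemma~\ref{gnc}, Theorem~\ref{hepc}). The subtleties you flag --- compactness of the limit's support via $[u]_\al\subseteq\overline{U(0)}$ for $\al\notin P_0(u)$, and the compatibility of the two constraints on $\delta$ --- are exactly the points that need care, and your treatment of them is sound.
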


$\overrightarrow{F^1_{USCB}(X)}$ need not be a closed set of $P^1_{USCB}(X)$.
For instance, $F^1_{USCB}(D)$ given in Example \ref{nce} is not a closed set of $P^1_{USCB}(D)$.
We can see that
$\overrightarrow{F^1_{USCB}(X)}$ is a closed set of $P^1_{USCB}(X)$
if and only if
$X$ has only one element.

For a set $U$ in $F^1_{USCB}(X)$, suppose that
\\
(a) \ $U$
is relatively compact in $(F^1_{USCB}(X), H_{\rm send})$;
\\
(b) \
 $\overrightarrow{U}$
is relatively compact in $(P^1_{USCB}(X), H_{\rm send})$;
\\
(c) \ The topological closure of $\overrightarrow{U}$ in $(P^1_{USCB}(X), H_{\rm send})$ is a subset of $\overrightarrow{F^1_{USCB}(X)}$.

Then
(a) holds if and only if (b) and (c) hold.

$\overrightarrow{F^1_{USCB}(X)}$ is closed in $P^1_{USCB}(X)$ if and only if for each set $U$ in $F^1_{USCB}(X)$,
(c) holds.

The following
Proposition \ref{psf} illustrates the role of the condition ``$U$ is equi-right-continuous at $0$''
in
the characterization of the relative compactness for a set $U$ in $(F^1_{USCB}(X), H_{\rm send})$ given in Theorem \ref{rcfe}.

For $w \in P^1_{USCB} (X)$, the following are equivalent:
(\romannumeral1)
$w\in \overrightarrow{F^1_{USCB}(X)}$;
(\romannumeral2)
$w\in \overrightarrow{F^1_{USC}(X)}$;
(\romannumeral3)
$\lim_{\delta\to 0+} H(\langle w \rangle_\delta, \langle w \rangle_0) $=0.

\begin{pp} \label{psf}
  Let $U$ be a subset of $ F^1_{USCB}(X)$.
Suppose the following conditions (\romannumeral1), (\romannumeral2) and (\romannumeral3):
\\
(\romannumeral1) \
$U$ is relatively compact in $(F^1_{USCB}(X), H_{\rm send})$;
\\
(\romannumeral2) \
$U$ is equi-right-continuous at $0$;
\\
(\romannumeral3) \
The topological closure of $\overrightarrow{U}$ in $(P^1_{USCB}(X), H_{\rm send})$ is a subset of $\overrightarrow{F^1_{USCB}(X)}$.

Then the condition (\romannumeral1) implies the condition (\romannumeral2),
and the condition (\romannumeral2) implies the condition (\romannumeral3).
 If $\overrightarrow{U}$ is relatively compact in $(P^1_{USCB}(X), H_{\rm send})$,
then the conditions (\romannumeral1), (\romannumeral2) and (\romannumeral3) are equivalent to each other.

\end{pp}

\begin{proof}
By Theorem \ref{rcfe}, we know that
 (\romannumeral1)$\Rightarrow$(\romannumeral2).

To show
(\romannumeral2)$\Rightarrow$(\romannumeral3).
Suppose that $\{\overrightarrow{u_n}\}$ converges to $u$ in $(P^1_{USCB}(X), H_{\rm send})$.
Then
$\lim_{n\to \infty} H(\langle \overrightarrow{u_n} \rangle_0, \langle u \rangle_0 )=0$;
that is,
$\lim_{n\to \infty} H(   [u_n]_0, \langle u \rangle_0 )=0$.

Note that $\{u_n\}$ converges to $\overleftarrow{u}$ in $(F^1_{USCB}(X), H_{\rm end})$.
By
Theorem \ref{aec}, $H([u_n]_\al, [\overleftarrow{u}]_\al) \rightarrow 0$ holds a.e. on $\al\in (0,1)$.
From
 $U$ is equi-right-continuous at $0$,
 we have
$\lim_{n\to \infty} H([u_n]_0, [\overleftarrow{u}]_0 )=0 $.

Thus $\langle u \rangle_0 = [\overleftarrow{u}]_0 $,
and hence
$u\in \overrightarrow{F^1_{USCB} (X)}$.
So (\romannumeral2)$\Rightarrow$(\romannumeral3).

If $\overrightarrow{U}$ is relatively compact in $(P^1_{USCB}(X), H_{\rm send})$,
then clearly
 (\romannumeral3)$\Rightarrow$(\romannumeral1),
and
thus
the conditions (\romannumeral1), (\romannumeral2) and (\romannumeral3)
are equivalent to each other.

\end{proof}

\begin{re}{\rm

For conditions (\romannumeral1), (\romannumeral2) and (\romannumeral3) in Proposition \ref{psf},
 (\romannumeral2)  does not imply (\romannumeral1);
 (\romannumeral3) does not imply (\romannumeral2).

Let $\{u_n\}$ be a sequence of fuzzy sets in $F^1_{USCB}(\mathbb{R})$ defined by
$u_n(x)=[0,n]_{\mathbb{R}}$,
$n=1,2,\ldots$.
Then
 $\{u_n\}$ is equi-right-continuous at $0$ but $\{u_n\}$ is not
relatively compact
in $(F^1_{USCB}(X), H_{\rm send})$. So (\romannumeral2)  does not imply (\romannumeral1).

Let $\{v_n\}$ be a sequence of fuzzy sets in $F^1_{USCB}(\mathbb{R})$ defined by
\[v_n(x)=
\left\{
  \begin{array}{ll}
1, & x\in [0,n], \\
1/n, & x\in [-n,0],\\
   0, & x\in \mathbb{R}\setminus  [-n, n],
  \end{array}
\right.
n=1,2,\ldots.
\]
Then $\{\overrightarrow{v_n}\}$ has no limit in $(P^1_{USCB}(X), H_{\rm send})$ and hence is closed
in $(P^1_{USCB}(X), H_{\rm send})$. However $\{v_n\}$ is not equi-right-continuous at $0$.
 So (\romannumeral3) does not imply (\romannumeral2).
}
\end{re}

\begin{re}
  {\rm
Theorem \ref{rcgu} and Proposition \ref{psf} imply Theorem \ref{rcfe}.
This is because
by Proposition \ref{psf} we can obtain that for a subset $U$ of $F^1_{USCB} (X)$,
$U$ is relatively compact in $(F^1_{USCB}(X), H_{\rm send})$
if and only if
$\overrightarrow{U}$ is relatively compact in $(P^1_{USCB}(X), H_{\rm send})$
and
$U$ is equi-right-continuous at $0$.

}
\end{re}

\begin{tm} \label{cgu}
   Suppose that $U$ is a subset of $P^1_{USCB} (X)$. Then the following
are
equivalent:
\\
(\romannumeral1)  $U$ is compact in $(P^1_{USCB} (X), H_{\rm send})$;
\\
(\romannumeral2)    $U$ is closed in $(P^1_{USCB} (X), H_{\rm send})$, and $U(0)$ is relatively compact in $(X,d)$;
\\
(\romannumeral3)
  $U$ is closed in $(P^1_{USCB} (X), H_{\rm send})$, and $U(0)$ is compact in $(X,d)$.
\end{tm}

\begin{proof}
By Theorem \ref{rcgu}, we obtain that (\romannumeral1)$\Leftrightarrow$(\romannumeral2).
Clearly (\romannumeral3)$\Rightarrow$(\romannumeral2).
We shall complete the proof by showing that
(\romannumeral1)$\Rightarrow$(\romannumeral3).
 To do this, suppose
 that $U$ is compact in $(P^1_{USCB} (X), H_{\rm send})$. Then $U$ is closed in $(P^1_{USCB} (X), H_{\rm send})$.
To verify (\romannumeral3), we only need to show that
$U(0)$ is compact in $(X, d)$.

By clause (\romannumeral2) of Theorem \ref{pseu},
$U_0$ is compact in $(K(X), H)$.
Thus by Theorem \ref{come},
$U(0)$ is compact in $(X,d)$.

\end{proof}

\begin{tm} \label{cfe}
   Suppose that $U$ is a subset of $F^1_{USCB} (X)$. Then the following
are
equivalent:
\\
(\romannumeral1) $U$ is compact in $(F^1_{USCB} (X), H_{\rm send})$;
\\
(\romannumeral2)
  $U$ is closed in $(F^1_{USCB} (X), H_{\rm send})$, $U(0)$ is relatively compact in $X$
and
$U$ is equi-right-continuous at $0$;
\\
(\romannumeral3)
  $U$ is closed in $(F^1_{USCB} (X), H_{\rm send})$, $U(0)$ is compact in $X$
and
$U$ is equi-right-continuous at $0$.

\end{tm}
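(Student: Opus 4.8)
The plan is to prove the chain of equivalences by reducing everything to the relative-compactness characterization in Theorem~\ref{rcfe}, together with the elementary topological fact that, in any metric space, a subset is compact if and only if it is both closed and relatively compact (that is, has compact closure). First I would establish (i) $\Leftrightarrow$ (ii). If $U$ is compact in $(F_{USCB}(X), H_{\rm send})$, then $U$ is closed and, being its own closure, relatively compact; Theorem~\ref{rcfe} then yields that $U(0)$ is relatively compact in $X$ and that $U$ is equi-right-continuous at $0$, which is precisely (ii). Conversely, under (ii) the two hypotheses of Theorem~\ref{rcfe} hold, so $U$ is relatively compact; combined with the assumed closedness of $U$ this forces $\overline{U}=U$ to be compact, giving (i).

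Next I would treat (ii) $\Leftrightarrow$ (iii). The implication (iii) $\Rightarrow$ (ii) is immediate, since a compact subset $U(0)$ of $X$ is in particular relatively compact. The content lies in (ii) $\Rightarrow$ (iii), where I must upgrade ``relatively compact'' to ``compact'' for $U(0)$. Since (ii) is already known to be equivalent to (i), I may assume $U$ is compact. The key observation is that the assignment $u\mapsto [u]_0$ is a continuous map from $(F_{USCB}(X), H_{\rm send})$ into $(K(X), H)$: indeed, Proposition~\ref{sge}(i) shows that $H_{\rm send}(u_n,u)\to 0$ implies $H([u_n]_0,[u]_0)\to 0$, which is sequential, hence genuine, continuity between metric spaces. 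Therefore $U_0=\{[u]_0: u\in U\}$ is the continuous image of the compact set $U$ and so is compact in $(K(X), H)$. Applying Lemma~\ref{coms} to $\mathcal{D}=U_0$ then gives that $\mathbf{D}=\bigcup\{C: C\in U_0\}=U(0)$ is compact in $X$, which is the remaining assertion of (iii).

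The main obstacle is precisely this last step: relative compactness of $U(0)$ in $X$ does not by itself guarantee that $U(0)$ is closed, so one cannot conclude compactness of $U(0)$ purely at the level of $\al$-cuts. The device that resolves it is to lift the question to the hyperspace $(K(X), H)$, using continuity of $u\mapsto [u]_0$ to transfer compactness of $U$ to compactness of $U_0$, and then to descend again via Lemma~\ref{coms}. I expect the verification of continuity of $u\mapsto [u]_0$ and the bookkeeping identifying $\bigcup\{C: C\in U_0\}$ with $U(0)$ to be routine, so that the proof ultimately amounts to assembling Theorem~\ref{rcfe}, Lemma~\ref{coms}, Proposition~\ref{sge}, and the standard compactness criterion.
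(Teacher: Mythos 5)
Your argument is correct and follows the route the paper evidently intends (the paper states Theorem~\ref{cfe} without a written proof, but its placement after Theorem~\ref{rcfe} and the $(K(X),H)$ subsection, and the analogy with the proof of Theorem~\ref{come} via Lemma~\ref{coms}, make the intended assembly clear): compactness equals closed plus relatively compact, Theorem~\ref{rcfe} handles (i)$\Leftrightarrow$(ii), and the upgrade to compactness of $U(0)$ in (iii) comes from the continuity of $u\mapsto[u]_0$ given by Proposition~\ref{sge}(i) together with Lemma~\ref{coms}. No gaps.
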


\begin{proof}
By Theorem \ref{rcfe}, we obtain that (\romannumeral1)$\Leftrightarrow$(\romannumeral2).
Clearly (\romannumeral3)$\Rightarrow$(\romannumeral2).
We shall complete the proof by showing that
(\romannumeral1)$\Rightarrow$(\romannumeral3). To do this, suppose
that $U$ is compact in $(F^1_{USCB} (X), H_{\rm send})$.
Since (\romannumeral1) implies (\romannumeral2),
to verify (\romannumeral3) we only need to show
that $U(0)$ is compact in $X$.

Note that
$U$ is compact in $(F^1_{USCB} (X), H_{\rm send})$ if and only if $\overrightarrow{U}$ is compact in $(P^1_{USCB} (X), H_{\rm send})$. Thus
by
Theorems \ref{cgu}, $U(0)=\overrightarrow{U}(0)$ is compact in $X$.

\end{proof}

\begin{re}
{\rm
For a subset $U$ of $F^1_{USCB} (X)$,
\\
(a) $U$ satisfies (\romannumeral1) of Theorem \ref{cfe}
if and only if
$\overrightarrow{U}$ satisfies (\romannumeral1) of Theorem \ref{cgu};
\\
(b)
$U$ satisfies (\romannumeral2) of Theorem \ref{cfe}
if and only if
$\overrightarrow{U}$ satisfies (\romannumeral2) of Theorem \ref{cgu};
\\
(c) $U$ satisfies (\romannumeral3) of Theorem \ref{cfe}
if and only if
$\overrightarrow{U}$ satisfies (\romannumeral3) of Theorem \ref{cgu}.

Clauses (b) and (c) can be obtained
by using
 Proposition \ref{psf} and Theorem \ref{rcgu}.
Theorem \ref{cgu} and clauses (a), (b) and (c) imply Theorem \ref{cfe}.

Since $U$ is compact in $(F^1_{USCB} (X), H_{\rm send})$ if and only if $\overrightarrow{U}$ is compact in $(P^1_{USCB} (X), H_{\rm send})$, we can use
Theorem \ref{cgu} to judge the compactness of a set $U$ in $(F^1_{USCB} (X), H_{\rm send})$.

}
\end{re}

\section{Completions
of $(F^1_{USCB} (X), H_{\rm send})$
and
$(F^1_{USCG} (X), H_{\rm end})$
}

In this section,
we show that $(P^1_{USCB} (\widetilde{X}), H_{\rm send})$ is a completion of
$(F^1_{USCB} (X), H_{\rm send})$.
We also show that
$(F^1_{USCG} (\widetilde{X}), H_{\rm end})$ is a completion of $(F^1_{USCB} (X), H_{\rm end})$,
and
thus a completion of $(F^1_{USCG} (X), H_{\rm end})$.

\begin{tm} \label{comfegn} Let $(X,d)$ be a metric space. Then the following
are
equivalent:
\begin{enumerate}
\renewcommand{\labelenumi}{(\roman{enumi})}

\item   $(X,d)$ is complete;

\item
  $(F^1_{USCG} (X), H_{\rm end})$ is complete.
\end{enumerate}
\end{tm}

\begin{proof}

(\romannumeral1) $\Rightarrow$ (\romannumeral2). \
Let $\{u_n\}$
 be a Cauchy sequence of $(F^1_{USCG} (X), H_{\rm end})$.
Then $U= \{u_n, n=1,2,\ldots\}$ is total bounded in $(F^1_{USCG} (X), H_{\rm end})$.
So
from the proof the sufficiency part of
Theorem \ref{tbfegn},
we know that $\{u_n\}$ has a convergent subsequence in $(F^1_{USCG} (X), H_{\rm end})$,
and thus
 $\{u_n\}$ is convergent in
$(F^1_{USCG} (X), H_{\rm end})$.

(\romannumeral2) $\Rightarrow$ (\romannumeral1). \
Let
 $\{x_n\}$ be a Cauchy sequence in $X$.
Note that $H_{\rm end} (\widehat{x},\,  \widehat{y}) = \min\{d(x,y),1\}$ for $x,y \in X$.
Then
$\{\widehat{x_n}\}$ is a Cauchy sequence in $(F^1_{USCG} (X), H_{\rm end})$,
and
therefore $\{\widehat{x_n}\}$ converges to $u\in F^1_{USCG} (X)$.
Thus there exists an $x\in X$
such
that $[u]_\al = \{x\}$ for all $\al\in [0,1]$ (i.e. $u = \widehat{x}$) and $d(x_n,x)\to 0$.

\end{proof}

\begin{re}
{\rm
 (\romannumeral2) $\Rightarrow$ (\romannumeral1) in Theorem \ref{comfegn} can also be shown as follows

$(X, d)$ is complete if and only if $(X, d^*)$ is complete, where $d^*(x,y) = \min\{d(x,y),1\}$ for $x,y \in X$.
Note that $H_{\rm end} (\widehat{x},\,  \widehat{y}) = d^*(x,y)$.
So the desired result follows
from the fact that
$(X, d^*)$ is isometric to the closed subspace
$(\widehat{X}, H_{\rm end})$
of $(F^1_{USCG} (X), H_{\rm end})$, where $\widehat{X} := \{\widehat{x}: x\in X\}$.
}
\end{re}

\begin{pp} \label{fcm}
Let $(X,d)$ be a metric space. Then
the following
are
equivalent:
\\
(\romannumeral1) \ $(X,d)$ is complete;
\\
(\romannumeral2) \
  $(F^1_{USCB} (X), d_\infty)$ is complete.

\end{pp}

\begin{proof}

 $(X,d)$ is isometric to $(\widehat{X}, d_\infty)$, which is a closed subspace
of   $(F^1_{USCB} (X), d_\infty)$. So (\romannumeral2)$\Rightarrow$(\romannumeral1) is proved.

To show (\romannumeral1)$\Rightarrow$(\romannumeral2),
suppose that $(X,d)$ is complete.
Let $\{u_n\}$ be a Cauchy sequence in  $(F^1_{USCB} (X), d_\infty)$.
Then for each $\al\in [0,1]$,
$\{ [u_n]_\al\}$ is a Cauchy sequence in $(K(X), H)$, and hence there is an $u(\al)\in K(X)$ such that
$H([u_n]_\al,  u(\al)) \to 0$.

As $\{u_n\}$ is a Cauchy sequence in  $(F^1_{USCB} (X), d_\infty)$,
 $H([u_n]_\al, u(\al)) $ converges uniformly to $0$ on $\al\in [0,1]$, denoted by
\begin{equation}\label{huc}
  H([u_n]_\al, u(\al)) \rightrightarrows 0 \,([0,1]).
\end{equation}
Clearly $u(\al) \subseteq u(\beta)$ for $0\leq \beta \leq \alpha \leq 1$.

If there is a $u\in F^1_{USCB} (X)$ such that $[u]_\al=u(\al)$, then by \eqref{huc},
$d_\infty(u_n, u) \to 0$, and so the proof is complete.
To prove the existence of a such $u\in F^1_{USCB} (X)$,
we only need to show that $\{u(\al), \al\in [0,1]\}$ has the following properties:
\\
(\romannumeral1) for each $\al\in (0,1]$, $u(\al) = \bigcap_{\beta < \al} u(\beta)$, and
\\
(\romannumeral2) $u(0) = \overline{\bigcup_{\gamma>0} u(\gamma)}$.

Since for $n=1,2,\ldots$ and $\al\in (0,1]$,
$\lim_{\beta\to \al-} H([u_n]_\al, [u_n]_\beta) = 0$ and
$\lim_{\gamma\to 0+} H([u_n]_\gamma, [u_n]_0) = 0$,
then by \eqref{huc},
$\lim_{\beta\to \al-} H(u(\al), u(\beta)) = 0$
and
$\lim_{\gamma\to 0+} H(u(\gamma), u(0)) = 0$.
Thus by Lemma \ref{c},
for each $\al\in (0,1]$, $u(\al) = \bigcap_{\beta < \al} u(\beta)$, and
$u(0) = \overline{\bigcup_{\gamma>0} u(\gamma)}$.
So
 (\romannumeral1) and (\romannumeral2) are true.

\end{proof}

Even if $(X,d)$ is complete,
$(F^1_{USCB} (X), H_{\rm send})$ need not be complete.
See Example \ref{nce} below.

\begin{eap} \label{nce}
  {\rm

let $D=\{0,1\}$
be a metric subspace of $\mathbb{R}$. Then $D$ is complete.
Let $u_n\in F^1_{USCB} (D)$, $n=1,2,\ldots$, be defined as
\[
u_n(x)=\left\{
         \begin{array}{ll}
           1, & x=0, \\
           1/n, & x=1.
         \end{array}
       \right.
\]
Then $\{\overrightarrow{u_n}\}$ converges
to $u\in P^1_{USCB} (D)$ defined by
\[
\langle u \rangle_\al =\left\{
         \begin{array}{ll}
           \{0\}, & \al\in (0,1], \\
           \{0,1\}, & \al=0.
         \end{array}
       \right.
\]
Thus
$\{u_n\}$ is a Cauchy sequence in $(F^1_{USCB} (D), H_{\rm send})$ and
 has no limit in $(F^1_{USCB} (D), H_{\rm send})$.
So
$(F^1_{USCB} (D), H_{\rm send})$ is not complete.

}
\end{eap}

We can see that $(F^1_{USCB} (X), H_{\rm send})$ is complete if and only if $X$ has only one element.

Theorem \ref{sce} below discusses the completeness of $(P^1_{USCB} (X), H_{\rm send})$
and then
Theorem \ref{scom} below gives
the completion of $(F^1_{USCB} (X), H_{\rm send})$.

\begin{tm} \label{sce} \ Let $(X,d)$ be a metric space. Then the following
are
equivalent:
\begin{enumerate}
\renewcommand{\labelenumi}{(\roman{enumi})}

\item   $X$ is complete;

\item
  $(P^1_{USCB} (X), H_{\rm send})$ is complete.
\end{enumerate}
\end{tm}

\begin{proof}

(\romannumeral1) $\Rightarrow$ (\romannumeral2). \
Let $\{u_n\}$
 be a Cauchy sequence in $(P^1_{USCB} (X), H_{\rm send})$.
Then $\{\overleftarrow{u_n}\} \subseteq F^1_{USCB} (X)$,
 and by (\romannumeral1) of Theorem \ref{pseu},
$H_{\rm end}(\overleftarrow{u_n}, \overleftarrow{u_m}) = H_{\rm end}(u_n, u_m) \leq  H_{\rm send}(u_n, u_m)$
for $n,m=1,2,\ldots$.
Hence
 $\{\overleftarrow{u_n}\}$
  is a Cauchy sequence in $(F^1_{USCG} (X), H_{\rm end})$.
From Theorem \ref{comfegn},
there is an $u \in F^1_{USCG} (X)$ such that
  $\{\overleftarrow{u_n}\}$ converges to $u$ in $(F^1_{USCG} (X), H_{\rm end}$).

 By (\romannumeral2) of Theorem \ref{pseu}, $ H(\langle u_n \rangle_0, \langle u_m \rangle_0) \leq H_{\rm send}(u_n, u_m)$ for $n,m=1,2,\ldots$.
So
 $\{ \langle u_n \rangle_0 \}$
is a Cauchy sequence in
 $(K(X), H)$.
From Theorem \ref{bfc},
there is an $u_0 \in K(X)$
such that $\{ \langle u_n \rangle_0 \}$ converges to $u_0$ in
 $(K(X), H)$.

Set $w \in P^1_{USCB}(X)$
 given by
\[
\langle w \rangle_\al=\left\{
           \begin{array}{ll}
           [u]_\al, &  \al>0,
\\
             u_0,  & \al=0.
           \end{array}
         \right.
\]
Then $u=\overleftarrow{w}$, $H_{\rm end}(u_n, w) = H_{\rm end}(\overleftarrow{u_n}, u)$
and
$H(\langle u_n\rangle_0, \langle w\rangle_0) = H(\langle u_n\rangle_0, u_0)$ for $n=1,2,\ldots$
 Thus from (\romannumeral3) or (\romannumeral4) of Theorem \ref{pseu},
 $\{u_n\}$ converges to
 $w$ in $(P^1_{USCB} (X), H_{\rm send})$.

(\romannumeral2) $\Rightarrow$ (\romannumeral1). \
Note that $d(x,y) = H_{\rm send} (\widehat{x},\,  \widehat{y})$.
So the desired result
follows from the fact
that $(X,d)$ is isometric to
a closed subspace of
 $(P^1_{USCB} (X), H_{\rm send})$.

\end{proof}

\begin{tm} \label{scom}
 $(P^1_{USCB} (\widetilde{X}), H_{\rm send})$ is a completion of $(F^1_{USCB} (X), H_{\rm send})$.
\end{tm}

\begin{proof}
From Theorem \ref{sce}, $(P^1_{USCB} (\widetilde{X}), H_{\rm send})$ is complete.
To show
that
$(P^1_{USCB} (\widetilde{X}), H_{\rm send})$ is a completion of $(F^1_{USCB} (X), H_{\rm send})$,
we only need to show
that
for each $u \in P^1_{USCB} (\widetilde{X})$ and each $\varepsilon>0$,
there is a
$w\in F^1_{USCB} (X)$
such that
$H_{\rm send}(u, \overrightarrow{w}) \leq \varepsilon$.
To show this is equivalent to
show the following affirmations (a) and (b):
\begin{description}
  \item[(a)] For each $u \in P^1_{USCB} (\widetilde{X})$ and each $\varepsilon>0$,
there exists a $v \in F^1_{USCB} (\widetilde{X})$
such that
$H_{\rm send} (u, \overrightarrow{v}) \leq \varepsilon$.

  \item[(b)] For each $v \in F^1_{USCB} (\widetilde{X})$ and each $\varepsilon>0$,
there exists a $w  \in F^1_{USCB} (X) $
such that $d_\infty (v, w) \leq \varepsilon$. Then by \eqref{smr} and \eqref{emsf},
$H_{\rm send} (v, w) \leq \varepsilon$ and $H_{\rm end} (v, w) \leq \varepsilon$.
\end{description}

Let $u \in P^1_{USCB} (\widetilde{X})$. Define $u_\varepsilon \in F^1_{USCB} (\widetilde{X})$, $\varepsilon>0$, given by
  \[
  [u_\varepsilon]_{\al} =\left\{
         \begin{array}{ll}
           \langle u \rangle_\al , & \al \in (\varepsilon, 1], \\
           \langle u \rangle_0, & \al \in  [0,\varepsilon ].
         \end{array}
       \right.
  \]
Then
$H_{\rm send} (u, \overrightarrow{u_\varepsilon}) \leq \varepsilon$.
So affirmation (a) is proved.

Let $v \in F^1_{USCB} (\widetilde{X})$.
We can choose
 a finite subset
$C_0$ of $X$ such that $H(C_0, [v]_0) < \varepsilon$.
Define
\begin{equation}\label{caf}
  C_\al: =   \{x\in C_0:  d (x, [v]_\al ) \leq \varepsilon    \}, \ \al\in (0,1].
\end{equation}
We affirm that $\{C_\al: \al \in [0,1]\}$ has the following properties
\\
(\romannumeral1) \ $C_\al \not= \emptyset$ for all $\al\in [0,1]$.
\\
(\romannumeral2) \ $H( C_\al, [v]_\al ) \leq \varepsilon$ for all $\al\in [0,1]$.
\\
(\romannumeral3) \  $C_\al = \bigcap_{\beta<\al} C_\beta$ for all $\al\in (0,1]$.
\\
(\romannumeral4) \ $C_0 = \bigcup_{\al>0} C_\al = \overline{\bigcup_{\al>0} C_\al }$.

For each $y\in [v]_\al $,
there
 exists $z_y\in C_0$ such that
$d(y,z_y) = d(y, C_0) < \varepsilon$. Hence $z_y\in C_\al$ and thus $C_\al \not= \emptyset$.
So
(\romannumeral1) is true.

To show (\romannumeral2), we only need
to show
that $ H ( C_\al, [v]_\al ) \leq \varepsilon  $
for $0<\al \leq 1$.
Let $\al\in (0,1]$.
From \eqref{caf},
 $ H^* ( C_\al, [v]_\al ) \leq \varepsilon  $.
In
the following, we show that
 $ H^* ( [v]_\al, C_\al) < \varepsilon  $.
In fact,
for each $y\in [v]_\al$,
$d(y, C_\al) \leq  d(y,z_y)  = d(y, C_0) $
(hence
$d(y, C_\al) = d(y, C_0)$).
Thus
 $ H^* ([v]_\al, C_\al) \leq  H([v]_0, C_0)  < \varepsilon  $.
So
(\romannumeral2) is proved.

Let $\al\in (0,1]$. Clearly $C_\al \subseteq \bigcap_{\beta<\al} C_\beta$.
By Lemma \ref{c},
 $\lim_{\beta \to \al-} H([v]_\al, [v]_\beta ) = 0$.
So for each $x\in X$,
$d(x, [v]_\alpha ) = \lim_{\beta \to \al-} d(x, [v]_\beta ) $,
and hence
$C_\al \supseteq \bigcap_{\beta<\al} C_\beta$.
Thus
$C_\al = \bigcap_{\beta<\al} C_\beta$.
So (\romannumeral3) is true.

Let $x \in C_0$. Then $d(x, [v]_0) < \varepsilon$.
Since
$[v]_0 =  \overline{\cup_{\al>0} [v]_\al}$,
there
exists $\al>0$ such that
$d(x, [v]_\al) < \varepsilon$ (in fact, for each $x\in X$,
$d(x, [v]_0 ) = \inf_{\al > 0} d(x, [v]_\al)$),
and thus
$x\in C_\al$.
So (\romannumeral4) is proved.

Set $w \in F(X)$ given by
$
  [w]_{\al} = C_\al
$
for all $\al \in [0,1]$.
Then
by (\romannumeral1), (\romannumeral3) and (\romannumeral4),
$w \in F^1_{USCB} (X)$.
From
(\romannumeral2), we have $d_\infty (v, w) \leq \varepsilon$, and then
$H_{\rm send} (v, w) \leq \varepsilon$.
So affirmation (b) is proved.

\end{proof}

\begin{pp}
  $(F^1_{USCB} (\widetilde{X}), d_\infty)$ is a completion of $(F^1_{USCB} (X),  d_\infty)$.
\end{pp}

\begin{proof}
By Proposition \ref{fcm},
$(F^1_{USCB} (\widetilde{X}), d_\infty)$ is complete.
 So from affirmation (b)
 in the proof of Theorem \ref{scom},
we have the desired result.

\end{proof}

\begin{tm} \label{pcom}
 $(P^1_{USCB} (\widetilde{X}), H_{\rm send})$ is a completion of $(P^1_{USCB} (X), H_{\rm send})$.
\end{tm}

\begin{proof}
  $(F^1_{USCB} (X), H_{\rm send})$ can be seen as a metric subspace of $(P^1_{USCB} (X), H_{\rm send})$.
$(P^1_{USCB} (X), H_{\rm send})$ is a metric subspace of $(P^1_{USCB} (\widetilde{X}), H_{\rm send})$.
So the desired result follows from Theorem \ref{scom}.

\end{proof}

\begin{tm} \label{ecom}
 $(F^1_{USCG} (\widetilde{X}), H_{\rm end})$ is a completion of $(F^1_{USCB} (X), H_{\rm end})$.

\end{tm}

\begin{proof}
  From Theorem \ref{comfegn}, $(F^1_{USCG} (\widetilde{X}), H_{\rm end})$ is complete.
To
 prove the desired result, it suffices to
show
that $F^1_{USCB} (X)$ is dense in $(F^1_{USCG} (\widetilde{X}), H_{\rm end})$.
By
affirmation (b) in the proof of Theorem \ref{scom},
to verify this it is enough to show
that for each $u \in F^1_{USCG} (\widetilde{X})$ and each $\varepsilon>0$,
there is a $v \in  F^1_{USCB} (\widetilde{X})$ such that $H_{\rm end} (u,v) \leq \varepsilon$.

Let $u \in F^1_{USCG} (\widetilde{X})$.
Define $u^\varepsilon \in  F^1_{USCB} (\widetilde{X})$, $\varepsilon>0$, given by
  \[
  [u^\varepsilon]_{\al} =\left\{
         \begin{array}{ll}
           [u]_\al , & \al \in (\varepsilon, 1], \\
          \mbox{} [u]_\varepsilon, & \al \in  [0,\varepsilon ].
         \end{array}
       \right.
  \]
Then
$H_{\rm end}  (u, u^\varepsilon) \leq \varepsilon$.

\end{proof}

\begin{tl} \label{fgecom}
 $(F^1_{USCG} (\widetilde{X}), H_{\rm end})$ is a completion of $(F^1_{USCG} (X), H_{\rm end})$.

\end{tl}

\begin{proof}
  Since $F^1_{USCB} (X) \subseteq F^1_{USCG} (X) \subseteq F^1_{USCG} (\widetilde{X})$,
the desired result
follows from
Theorem \ref{ecom}.

\end{proof}

\section{Conclusions}

In this paper, we discuss the properties and relations of
$H_{\rm end}$ metric and $H_{\rm send}$ metric on fuzzy sets in a metric space $X$.

To aid discussion, we introduce the sets $P^1_{USC}(X)$ and $P^1_{USCB}(X)$. $P^1_{USCB}(X)$ is a subset of $P^1_{USC}(X)$.
The
$F^1_{USC}(X)$ and $F^1_{USCB}(X)$ can be viewed as the subsets of $P^1_{USC}(X)$ and $P^1_{USCB}(X)$, respectively.
We
define the
$H_{\rm send}$ distance and the $H_{\rm end}$ distance on $P^1_{USC}(X)$,
and give the relations among
the $H_{\rm send}$ distance, the $H_{\rm end}$ distance and the Kuratowski convergence on $P^1_{USC}(X)$.
Then, as corollaries, we obtain the
 relations among
the $H_{\rm send}$ metric, the $H_{\rm end}$ metric and the $\Gamma$-convergence on $F^1_{USC}(X)$.

We
give the
level characterizations of $H_{\rm end}$ convergence and $\Gamma$-convergence on $F^1_{USC}(X)$.
By using the above results including the level characterizations of the $H_{\rm end}$ convergence,
we give
 the relations among the $H_{\rm end}$ metric, the $H_{\rm send}$ metric and
the $d_p^*$ metric.
We point out that the values of the metrics can be directly compared among certain ones in the supremum metric $d_\infty$, the $H_{\rm end}$ metric, the $H_{\rm send}$ metric and
the $d_p^*$ metric.

Based on above results,
we
give characterizations of compactness and completions of two kinds
of
fuzzy set spaces $(F^1_{USCG} (X), H_{\rm end})$
and
$(F^1_{USCB} (X), H_{\rm send})$, respectively.
We also investigate characterizations of compactness and completions
of
$(P^1_{USCB} (X), H_{\rm send})$.
$(F^1_{USCB} (X), H_{\rm send})$ can be treated as a metric subspace of $(P^1_{USCB} (X), H_{\rm send})$.

Note
 that $\mathbb{R}^m$ is complete and
 for a set $V$ in $\mathbb{R}^m$, the following are equivalent:
(\romannumeral1 ) $V$ is bounded;
(\romannumeral2) $V$ is totally bounded; (\romannumeral3) $V$
is relatively compact.
We can obtain the characterizations of compactness and completions
of
$(F^1_{USCB} (\mathbb{R}^m), H_{\rm send})$, $(F^1_{USCG} (\mathbb{R}^m), H_{\rm end})$ and $(P^1_{USCB} (\mathbb{R}^m), H_{\rm send})$
by using that of
$(F^1_{USCB} (X), H_{\rm send})$, $(F^1_{USCG} (X), H_{\rm end})$ and $(P^1_{USCB} (X), H_{\rm send})$ given in this paper.

The results in this paper
have potential applications in fuzzy set research involving
$H_{\rm end}$ metric and $H_{\rm send}$ metric.

\newpage

 \appendix

\section{Counterexamples}

In this section, we give an example
 to
illustrate the
conclusions in Sections \ref{lcg},
\ref{lce} and \ref{rem}.

Let $(X_j, d_j)$, $j \in J$, be metric spaces.
Define an extended metric $d$
on
$\prod_{j\in J} X_j$
as
\begin{equation}\label{upm}
d(x,y):= \sup\{d_j(x_j, y_j):  j \in J\}
\end{equation}
for $x=(x_j)_{j \in J}$ and $y=(y_j)_{j \in J}$.

We use the symbol $\prod_{j\in J} (X_j, d_j)$ to denote the extended metric space $(\prod_{j\in J} X_j, d)$.
If not mentioned specially, we suppose by default
that
$\prod_{j\in J} X_j$ is endowed with the extended metric $d$ given by \eqref{upm}.

Let $u_j  \in  F(X_j)$, $j \in J$.
Define
 $u \in F (\prod_{j \in J} X_j) $
as
\begin{equation}\label{pdf}
 [u]_\al = \prod_{j \in J} [u_j]_\al \   \mbox{for each} \    \al \in (0,1].
\end{equation}
We use
\bm{$ \prod_{j \in J} u_j$}
to denote the fuzzy set
 $u$ in $\prod_{j \in J} X_j$ given by \eqref{pdf}.

In \cite{huang17c}, we show
that
$u= \prod_{j \in J} u_j $ is well-defined and give the conclusion stated in Theorem \ref{pfn}.
For an extended metric space $(Y,\rho)$, we define
\begin{gather*}
  F_{USC} (Y)  =  \{ u\in F(Y): [u]_\al \ \mbox{is closed in} \ (Y,\rho) \ \mbox{for} \  \al\in (0,1]  \} \\
   F^1_{USC} (Y)  =  \{ u\in F(Y): [u]_\al \ \mbox{is nonempty and closed in} \ (Y,\rho) \ \mbox{for} \  \al\in (0,1]  \}
\end{gather*}

\begin{tm} \cite{huang17c} \label{pfne}
Let $J$ be a set, and for each $j \in J$, let $(X_j, d_j)$
be a
metric space.
If $u_j  \in  F_{USC}(X_j)$ for each $j \in J$,
then
 $u = \prod_{j \in J} u_j $
is a
fuzzy set
in $F_{USC} (\prod_{j \in J} X_j)$.

\end{tm}

\begin{tm} \label{pfn}
Let $J$ be a set, and for each $j \in J$, let $(X_j, d_j)$
be a
metric space.
If $u_j  \in  F^1_{USC}(X_j)$ for each $j \in J$,
then
 $u = \prod_{j \in J} u_j $
is a
fuzzy set
in $F^1_{USC} (\prod_{j \in J} X_j)$.

\end{tm}

\begin{proof}
  The desired result follows immediately from Theorem \ref{pfne} and the definition of $\prod_{j \in J} u_j $.

\end{proof}

\begin{tm} \label{fvm}
Let $J$ be a set, and for each $j \in J$, let $(X_j, d_j)$
be a
metric space,
$u_j  \in  F(X_j)$
and $u = \prod_{j \in J} u_j $.
Then for each $\xi = (\xi_j)_{j\in J} \in \prod_{j \in J} X_j$,
 $u(\xi) =  \inf_{j\in J} u_j (\xi_j) $.

\end{tm}

\begin{proof}

For each $\al\in (0,1]$, by \eqref{pdf}
\begin{align*}
 & u(\xi) \geq \al \\
& \Leftrightarrow  \xi\in [u]_\al = \prod_{j \in J} [u_j]_\al \\
& \Leftrightarrow \mbox{for each } j\in J, \ \xi_j \in [u_j]_\al \\
& \Leftrightarrow   \mbox{for each } j\in J, \ u_j(\xi_j) \geq \al \\
& \Leftrightarrow  \inf_{j\in J} u_j(\xi_j) \geq \al.
\end{align*}
Thus we obtain the desired result.

\end{proof}

 $[0,3]$ can be seen as a metric subspace of $\mathbb{R}$.
So
$\prod_{x\in (0,1]} [0,3]$ is a metric subspace with the metric $d$ given by \eqref{upm}.

The following Example \ref{rcp} shows
that
there exist $u$ and $u_n$, $n=1,2,\ldots$ in
$ F^1_{USC} (\prod_{x\in (0,1]} [0,3])$
such that
 $  H_{\rm send} (u_n, u) \to 0$,
$\{[u_n]_\al \}$ does not Kuratowski converge
to
$[u]_\al$ when $\al\in (0,1]$,
and
$H([u_n]_\al, [u]_\al) = 1$
 for all $\al\in (0,1]$ and $n=1,2,\ldots$.

\begin{eap}
 \label{rcp}
{\rm
 For $x\in (0,1]$, define
  $u_x\in F^1_{USC} ([0,3])$ as follows
  \[
u_x(t)=\begin{cases}
1, & \ t=0,
\\
x, & \ t\in (0,1],
\\
0, & \ t\notin [0,1].
\end{cases}
\]
Put
  $$u= \prod_{x\in (0,1]} u_x.$$
  Then, by Theorem \ref{pfn}, $u   \in     F^1_{USC} (\prod_{x\in (0,1]} [0,3])$
and
  \begin{equation}\label{uce}
    [u]_\al =\left\{
             \begin{array}{ll}
             \prod_{x\in (0,1)} \{0\}, & \al = 1,
\\
  \prod_{x\in (0,\alpha)} \{0\} \times  \prod_{x\in [\alpha, 1)} [0,1], & \al \in (0, 1).
             \end{array}
           \right.
  \end{equation}
 Thus
$P(u) = P_0(u) = (0,1)$.

For
$n=1,2,\ldots$, $x\in (0,1]$,
define $u_{n, x} \in F^1_{USC} ([0,3])$ as follows
 \[
u_{n, x} (t)=\begin{cases}
1, & \ t=0,
\\
\dfrac{-x}{n} t + x, & \ t\in (0,1],
\\
0, & \ t\notin [0,1].
\end{cases}
\]
We can see that for each $x\in (0,1]$, $H_{\rm send} (u_{n,x} , u_x) \to 0$ as $n\to \infty$.

Put
  $$u_n= \prod_{x\in (0,1]} u_{n,x}.$$
  We affirm that
\begin{description}
  \item[(a-\romannumeral1)] \  $[u]_0 = [u_n]_0 = \{ \xi = (\xi_x)_{x\in (0,1]} \in \prod_{x\in (0,1]} [0,1] : \xi_x \to 0 \ \mbox{as} \ x\to 0  \}$ for $n=1,2,\ldots$;

  \item[(a-\romannumeral2)] \ $  H_{\rm send} (u_n, u) \to 0$;

  \item[(a-\romannumeral3)]
For each $\al\in (0,1]$, there is a $\zeta \in [u]_\al$ such that
$d(\zeta, [u_n]_\al) = 1$ for all $n=1,2,\ldots$;

  \item[(a-\romannumeral4)]
$\{[u_n]_\al \}$ does not Kuratowski converge
to
$[u]_\al$
according to
$(\prod_{x\in (0,1]} [0,3], d)$ when $\al\in (0,1]$;

  \item[(a-\romannumeral5)] $H([u_n]_\al, [u]_\al) =1$ for all $\al\in (0,1]$ and $n=1,2,\ldots$.

\end{description}

Set $D := \{ \xi  = (\xi_x)_{x\in (0,1]}  \in \prod_{x\in (0,1]} [0,1] : \xi_x \to 0 \ \mbox{as} \ x\to 0  \}$.
Since $[u_n]_\al \subseteq [u_{n+1}]_\al \subseteq [u]_\al$
for $\al \in (0,1]$ and $n=1,2,\ldots$, then
we have that
 $[u_n]_0 \subseteq [u_{n+1}]_0 \subseteq [u]_0$ for $n=1,2,\ldots$.
So to show (a-\romannumeral1), we only need to show that $D \subseteq [u_1]_0$
and
$[u]_0 \subseteq D $.

To show $D \subseteq [u_1]_0$, it suffices to show that
for each $\xi\in D$ and $\varepsilon>0$,
there exists
an $\eta \in \cup_{\al>0} [u_1]_\al$ such that $d(\xi, \eta ) < \varepsilon$.

Let $\xi\in D$ and $\varepsilon>0$. Then there is a $\delta\in (0,1]$ such that
$\xi_x \leq \varepsilon$ when $x <\delta$.
Take a $\gamma \in (0, 1]$
such that
$[u_{1,\delta}]_\gamma \supseteq [0, 1-\varepsilon] $.
Then
$[u_{1, x}]_\gamma \supseteq [0, 1-\varepsilon] $ when $x\in [\delta,1]$.
So for each
$x\in [\delta,1]$, there is an $\eta(x) \in [u_{1, x}]_\gamma$
such that
$|\xi_x - \eta(x) | \leq \varepsilon$.

Define
$\eta=(\eta_x)_{x\in (0,1]} $ as
\[
\eta_x=\left\{
           \begin{array}{ll}
             \eta(x), & x\geq \delta, \\
         0, &   x< \delta.
           \end{array}
         \right.
\]
Then for each $x\in (0,1]$, $\eta_x \in [u_{1,x}]_\gamma$ and therefore $\eta\in [u_1]_\gamma \subset \cup_{\al>0} [u_1]_\al$.
We can see that
$d(\xi, \eta ) = \sup_{x\in (0,1]} d(\xi_x, \eta_x) \leq \varepsilon$.

To show $[u]_0 \subseteq D $, it suffices to show that if $\xi \notin D$,
then $\xi \notin [u]_0 $.
Since $[u]_0 \subseteq \prod_{x\in (0,1]} [0,1]$, to verify this,
it is enough to show that if $\xi\in \prod_{x\in (0,1]} [0,1] \setminus D$, then
$\xi \notin [u]_0 $.

Let $\xi\in \prod_{x\in (0,1]} [0,1] \setminus D$.
Then there is an $\varepsilon>0$ and a sequence $\{x_n\} $ in $[0,1] $
such that
$x_n \to 0$ and $\xi_{x_n} \geq \varepsilon$.
We claim that
for each $\al>0$
and $\zeta \in [u]_\al$, $d(\xi, \zeta) \geq \varepsilon$ because $\zeta_x = 0$ for $x<\al$.
Thus
$\xi \notin [u]_0 =  \overline{\cup_{\al>0} [u]_\al}$.

To show
(a-\romannumeral2), we only need to show that
\begin{gather}
H^* (  {\rm send}\,u_n,     {\rm send}\,u   ) \to 0, \label{hsrcnu}
\\
H^*({\rm send}\,u, {\rm send}\,u_n) \to 0. \label{hrscun}
\end{gather}

 For $n=1,2,\ldots$, since
$\mathrm{send} \, u_n \subset \mathrm{send} \, u$, then
\begin{equation*}
H^*({\rm send}\, u_n, {\rm send}\, u) = 0.
\end{equation*}
Thus \eqref{hsrcnu} is true.

To show \eqref{hrscun},
let
 $(\xi, \al) \in \mathrm{send} \, u$, where $\xi=(\xi_x)_{x\in (0,1]}$.
Put
$\alpha_n = \max \{ \al- 1/n, 0   \}$. We claim that $(\xi, \alpha_n) \in \mathrm{send} \, u_n$.

If
$\al_n =0$, then by affirmation (a-\romannumeral1),
$(\xi, \alpha_n) \in \mathrm{send} \, u_n$.

If
$\al_n >0$.
Note that for each $n=1,2,\ldots$, $x\in (0,1]$ and $t\in [0,1]$,
 $u_{n,x} (t) \geq u_x(t)-\frac{1}{n}$.
Then by Theorem \ref{fvm}
\begin{align*}
  u_n(\xi) = \inf_{x\in (0,1]}  u_{n,x}(\xi_x) \geq \inf_{x\in (0,1]}  u_{x}(\xi_x) - \frac{1}{n} =   u(\xi)- \frac{1}{n} \geq \al- \frac{1}{n}=\al_n.
\end{align*}
Thus
$(\xi, \alpha_n) \in \mathrm{send} \, u_n$.

Since
$
|\al-\alpha_n| \leq 1/n $,
then
$d((\xi, \al), \mathrm{send} \, u_n) \leq d((\xi, \al), (\xi, \alpha_n) )\leq 1/n$.
From
 the arbitrariness of $(\xi, \al) \in \mathrm{send} \, u$,
we
have
$H^*({\rm send}\,u, {\rm send}\,u_n) \leq 1/n$ (in fact, $H^*({\rm send}\,u, {\rm send}\,u_n) = 1/n$)
and
thus \eqref{hrscun} is true.

To show (a-\romannumeral3),
let $\al\in (0,1]$. Define $\zeta = (\zeta_x)_{x\in (0,1]}$ by
\[\zeta_x =
 \left\{
  \begin{array}{ll}
    1, & x=\al, \\
    0, & x=(0,1] \setminus \{ \al\}.
  \end{array}
\right.
\]
Then
by \eqref{uce},
$\zeta \in [u]_\al = \prod_{x\in (0,1]}[u_{x}]_\alpha$.

Note that for each $n=1,2,\ldots$ and $x\in (0,1]$,
it holds that $[u_{n,x}]_x= \{0\}$.
Then
for each $n=1,2,\ldots $ and $\xi \in  [u_n]_\al =  \prod_{x\in (0,1]}   [u_{n,x}]_\alpha$,
we have $\xi_\al = 0$.
and
thus for $n=1,2,\ldots$,
$$d(\zeta, [u_n]_\al) = 1.$$
Hence (a-\romannumeral3)
is true.

(a-\romannumeral4)
follows immediately from (a-\romannumeral3).

(a-\romannumeral5)
follows immediately from (a-\romannumeral3) and
the fact that
$[u]_\al \subset  \prod_{x\in (0,1]} [0,1]  $ and $[u_n]_\al  \subset \prod_{x\in (0,1]} [0,1] $, $n=1,2,\ldots$.

  }
\end{eap}

\section*{Acknowledgement}

The author would like to thank the three anonymous referees
for their invaluable comments and suggestions.

\end{document}